\newcommand{\A}{\mathcal{A}}
\newcommand{\empt}{\varepsilon}
\newcommand{\NN}{\mathbb{N}}
\newcommand{\rev}{\widetilde}
\newcommand{\bw}{\boldsymbol{w}}
\renewcommand{\Alph}{\mbox{Alph}}
\theoremstyle{plain}
\newtheorem{theorem}{Theorem} 
\newtheorem{lemma}[theorem]{Lemma}
\newtheorem{corollary}[theorem]{Corollary}
\newtheorem{proposition}[theorem]{Proposition}
\newtheorem{remark}[theorem]{Remark}
\theoremstyle{definition}
\newtheorem{definition}[theorem]{Definition}
\newtheorem{example}[theorem]{Example}
\theoremstyle{remark}
\newtheorem*{note}{Note}
\begin{document}

\title{Generalized trapezoidal words}

\author{Amy Glen}

\thanks{Corresponding author: Amy Glen}
\thanks{\textit{Note:} Preliminary work on this new class of words was presented in talks by the first author at the \textit{35th Australasian Conference on Combinatorial Mathematics \& Combinatorial Computing} (35ACCMCC) in December 2011 and at a workshop on \textit{Outstanding Challenges in Combinatorics on Words} at the Banff International Research Station (BIRS) in February 2012. }

\address{Amy Glen \newline 
\indent School of Engineering \& Information Technology \newline
\indent Murdoch University \newline
\indent  90 South Street
\newline
\indent Murdoch WA 6150 AUSTRALIA}%
\email{\href{mailto:amy.glen@gmail.com}{amy.glen@gmail.com}}

\author{Florence Lev\'e}

\address{Florence Lev\'e \newline
\indent Laboratoire MIS \newline
\indent Universit\'{e} de Picardie Jules Verne \newline
\indent 33 rue Saint Leu \newline 
\indent 80039 Amiens FRANCE}%
\email{\href{mailto:florence.leve@u-picardie.fr}{florence.leve@u-picardie.fr}}

\date{Submitted August 2014; Revised February 2015.}

\begin{abstract} The \emph{factor complexity function} $C_w(n)$ of a finite or infinite word $w$ counts the number of distinct \emph{factors} of $w$ of length $n$ for each $n \ge 0$. A finite word $w$ of \emph{length} $|w|$ is said to be \emph{trapezoidal} if the graph of its factor complexity $C_w(n)$ as a function of $n$ (for $0 \leq n \leq |w|$) is that of a regular trapezoid (or possibly an isosceles triangle); that is, $C_w(n)$ increases by 1 with each $n$ on some interval of length~$r$, then $C_w(n)$ is constant on some interval of length $s$, and finally $C_w(n)$ decreases by 1 with each $n$ on an interval of the same length $r$. Necessarily $C_w(1)=2$ (since there is one factor of length $0$, namely the \emph{empty word}), so any trapezoidal word is on a binary alphabet. Trapezoidal words were first introduced by de~Luca (1999) when studying the behaviour of the factor complexity of \emph{finite Sturmian words}, i.e., factors of infinite ``cutting sequences'', obtained by coding the sequence of cuts in an integer lattice over the positive quadrant of $\mathbb{R}^2$ made by a line of irrational slope. Every finite Sturmian word is trapezoidal, but not conversely. However, both families of words (trapezoidal and Sturmian) are special classes of so-called \emph{rich words} (also known as \emph{full words}) -- a wider family of finite and infinite words characterized by containing the maximal number of palindromes -- studied in depth by the first author and others in~2009.

In this paper, we introduce a natural generalization of trapezoidal words over an arbitrary finite alphabet $\mathcal{A}$, called \emph{generalized trapezoidal words} (or \emph{GT-words} for short). In particular, we study combinatorial and structural properties of this new class of words, and we show that, unlike the binary case, not all GT-words are rich in palindromes when $|\mathcal{A}| \geq 3$, but we can describe all those that are rich.
\end{abstract}

\subjclass[2000]{68R15}

\keywords{word complexity; trapezoidal word; Sturmian word; palindrome; rich word}

\maketitle

\section{Introduction}

Given a finite or infinite word $w$, let $C_w(n)$  (resp.~$P_w(n)$) denote the \textit{factor complexity function} (resp.~the \textit{palindromic complexity function}) of $w$, which associates with each integer $n\geq 0$ the number of distinct factors (resp.~palindromic factors) of $w$ of length $n$. The well-known \textit{infinite Sturmian words} are characterized by both their factor complexity and palindromic complexity. In 1940 Morse and Hedlund~\cite{gHmM40symb} established that an infinite word $\bw$ is Sturmian if and only if $C_{\bw}(n)=n+1$ for each $n\geq 0$. Almost half a century later, in 1999, Droubay and Pirillo~\cite{xDgP99pali} showed that an infinite word $\bw$ is Sturmian if and only if  $P_{\bw}(n)=1$ whenever $n$ is even, and $P_{\bw}(n)=2$ whenever $n$ is odd. In the same year, de~Luca~\cite{aD99onth} studied the factor complexity function of finite words and showed, in particular, that if $w$ is a \textit{finite Sturmian word} (meaning a factor of an infinite Sturmian word), then the graph of $C_w(n)$ as a function of $n$ (for $0\leq n\leq |w|$, where $|w|$ denotes the \textit{length} of $w$) is that of a regular trapezoid (or possibly an isosceles triangle). That is, $C_w(n)$ increases by $1$ with each $n$ on some interval of length $r$, then $C_w(n)$ is constant on some interval of length $s$, and finally $C_w(n)$ decreases by $1$ with each $n$ on an interval of the same size $r$. Such a word is said to be \textit{trapezoidal}. Since $C_w(1)=2$, any  trapezoidal word is necessarily on a binary alphabet.

In this paper, we study combinatorial and structural properties of the following natural generalization of trapezoidal words over an arbitrary finite alphabet.

\pagebreak

\begin{definition} \label{D:trapezoidal} A finite word $w$ with \textit{alphabet} $\Alph(w) := \A$, $|\A| \geq 2$, is said to be a \textbf{\emph{generalized trapezoidal word}} (or \textbf{\emph{GT-word}} for short) if there exist positive integers $m$, $M$ with $m \leq M$ such that the factor complexity funtion $C_w(n)$ of $w$ increases by $1$ for each $n$ in the interval $[1,m]$, is constant for each $n$ in the interval $[m,M]$, and decreases by 1 for each $n$ in the interval $[M,|w|]$. That is, $w$ is a GT-word if there exist positive integers $m$, $M$ with $m \leq M$ such that $C_w$ satisfies the following:
\begin{center}
\begin{tabular}{ll}
$C_w(0)=1$, &~ \\ 
$C_w(i) = |\A| + i - 1$ &~ for $1 \leq i \leq m$, \\
$C_w(i+1)=C_w(i)$     &~ for $m \leq i \leq M-1$, \\
$C_w(i+1)=C_w(i)-1$   &~ for $M \leq i \leq |w|$.\\
\end{tabular}
\end{center}
\end{definition}

So if a finite word $w$ consisting of at least two distinct letters is a GT-word then the graph of its factor complexity $C_w(n)$ as a function of $n$ (for $0 \leq n \leq |w|$) forms a regular trapezoid (or possibly an isosceles triangle when $m=M$) on the interval $[1,|w|-|\A|+1]$, as shown in Figure~\ref{F:trap-graph} below. In the case of a 2-letter alphabet, the GT-words are precisely the (binary) trapezoidal words studied by de~Luca~\cite{aD99onth}. %and D'Alessandro~\cite{fD02acom}.

\begin{figure}[htb!]
\includegraphics[scale=0.8]{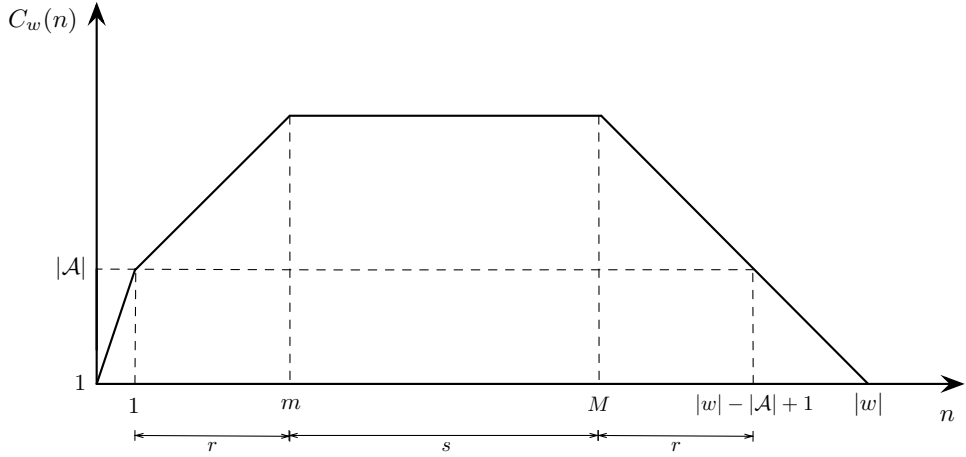}
\caption{{\small Graph of the factor complexity function $C_w(n)$ of a GT-word $w$.}}
\label{F:trap-graph}
\end{figure}

\begin{remark} \label{R:one-letter-words} In what follows, any finite word $w$ with $|\Alph(w)|=1$ (e.g., $w=aaaa$) will also be classed as a generalized trapezoidal word since the complexity function of such a word, being constant on the interval $[1,|w|]$, satisfies the ``trapezoidal conditions'' in Definition~\ref{D:trapezoidal} with $m=1$ and $M=|w|$.
\end{remark} 

Let $w$ be a finite word with alphabet $\Alph(w) := \A$, $|\A| \geq 2$. A \textit{right special} factor $u$ of $w$ is one that can be extended to the right by at least two different letters and still remains a factor of $w$, i.e., $ux$ is a factor of $w$ for at least two different letters $x \in \A$.  Let $R_w$ ($= R$) denote the smallest positive integer $r$ such that $w$ has no right special factor of length $r$ and let $K_w$ ($= K$) denote the length of the shortest \textit{unrepeated suffix} of $w$ (i.e., the shortest suffix of $w$ that occurs only once in $w$). In~\cite[Proposition~4.7]{aD99onth}, de~Luca proved that a finite word $w$ is a binary trapezoidal word if and only if $|w|=R_w+K_w$. 

\begin{example} \label{E:binary-trap}
The binary word $w=aaabb$ of length $5$ has complexity sequence $\{C_w(n)\}_{n\geq0} = 1, 2, 3, 3, 2, 1$, so $w$ is clearly a trapezoidal word and we see that $R_w = 3$, $K_w = 2$, and indeed $|w| = R_w + K_w$.
\end{example}

Every finite Sturmian word $w$, being a binary trapezoidal word, satisfies the condition $|w| = R_w + K_w$, but not conversely, i.e., there exist non-Sturmian binary trapezoidal words. For instance, the binary trapezoidal word $aaabb$ in Example~\ref{E:binary-trap} is not Sturmian because it contains two palindromes, $aa$ and $bb$, of \textit{even} length $2$. All non-Sturmian trapezoidal words were classified by D'Alessandro~\cite{fD02acom} in~2002.

It is natural to wonder if there is an analogous combinatorial characterization in terms of $R$ and $K$ for generalized trapezoidal words. One might guess, for instance, that GT-words are, perhaps, the finite words $w$ that satisfy the condition $|w| = R_w + K_w + |\Alph(w)| - 2$. But whilst it is true that any word satisfying this ``RK-condition'' is a GT-word (see Corollary~\ref{C:RK-condition} later), the converse does not hold. For example, the GT-word $u=ababadac$ of length $8$ with complexity sequence $\{C_u(n)\}_{n\geq0} = 1, 4, 5, 5, 5, 4, 3, 2, 1$ has $R_u=4$ and $K_u=1$, but $R_u+K_u+2 \ne 8$. However, we see that $|u| = R_v + K_v + |\Alph(u)| - 2$ where $v$ is the so-called ``heart'' of $u$ (see Definition~\ref{D:heart}); namely, the factor $v=ababada$ (with $R_v = 4$, $K_v = 2$) obtained from $u$ by deleting its last letter $c$ that occurs only once in $u$. It turns out that the condition $|w| = R_v + K_v + |\Alph(w)| -2$, where $v$ is the \textit{heart} of $w$, does indeed characterize generalized trapezoidal words (see Theorem~\ref{T:characterization1-take2} later).

In the next section, we prove some combinatorial properties of GT-words, particularly with respect to the parameters $R$ and $K$, followed by our main results in Section~\ref{S:main} where we prove some characterizations of GT-words (see Theorem~\ref{T:characterization1-take2}, Corollary~\ref{C:characterization1-take2}, and Theorem~\ref{T:triangle}) and describe all the GT-words that are \textit{rich} in palindromes (see Theorem~\ref{T:GT-rich}). We use standard terminology and notation for combinatorics on words, as in the book~\cite{mL02alge}, for instance.

\section{Preliminary Results} \label{S:preliminary}

The following result of de Luca \cite{aD99onth}, which describes the graph of the complexity function of any given word $w$, will be needed in what follows.

\begin{proposition} \cite[Proposition~4.2]{aD99onth} \label{P:deLuca1} Let $w$ be a finite word with $|\Alph(w)| \geq 2$ and let $m=\min\{R_w,K_w\}$, $M=\max\{R_w,K_w\}$. The factor complexity function $C_w$ of $w$ is strictly increasing on the interval $[0, m]$, is non-decreasing on the interval $[m, M]$, and is strictly decreasing on the interval $[M, |w|]$. Moreover, for all $n \in [M, |w|]$, one has $C_w(n+1)=C_w(n)-1$. If $R_w < K_w$, then $C_w$ is constant on the interval~$[m, M]$. 
\end{proposition}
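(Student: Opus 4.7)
The plan is to use the standard identity relating the first difference $s_w(n) := C_w(n+1) - C_w(n)$ to right-special factors and unrepeated suffixes. Every factor of length $n+1$ has a unique prefix of length $n$, so
\[
C_w(n+1) = \sum_{u \in F_n(w)} d^+_w(u),
\]
where $F_n(w)$ is the set of factors of $w$ of length $n$ and $d^+_w(u) := |\{x \in \A : ux \in F_{n+1}(w)\}|$. Subtracting $C_w(n) = |F_n(w)|$ and observing that $d^+_w(u) = 0$ exactly when $u$ occurs in $w$ only as a suffix (hence for at most one factor per length), while $d^+_w(u) - 1 \geq 1$ exactly when $u$ is right-special, yields the key formula
\[
s_w(n) \;=\; \sum_{\substack{u \in F_n(w) \\ u\ \text{right-special}}} (d^+_w(u) - 1)\;-\; \epsilon_n,
\]
where $\epsilon_n = 1$ if the length-$n$ suffix of $w$ is an unrepeated suffix of $w$ and $\epsilon_n = 0$ otherwise.

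I would then establish two monotonicity facts needed to control the sign of $s_w(n)$ in each regime. First, if $u$ is right-special of length $n \geq 1$ and we write $u = av$ with $a \in \A$, then the two distinct right-extensions of $u$ restrict to two distinct right-extensions of $v$, so $v$ is right-special of length $n-1$; combined with the fact that the empty word is right-special (since $|\A| \geq 2$) and the definition of $R_w$, this gives: right-special factors of length $n$ exist if and only if $0 \leq n < R_w$. Second, if the length-$n$ suffix of $w$ is unrepeated then so is every longer suffix (any repeated occurrence of the longer one would produce a repeat of the shorter one), so $\epsilon_n = 1$ iff $n \geq K_w$.

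Putting these together gives the three regimes by simply reading off signs. For $0 \leq n < m = \min\{R_w, K_w\}$, the right-special sum is $\geq 1$ while $\epsilon_n = 0$, so $s_w(n) \geq 1$ and $C_w$ is strictly increasing on $[0,m]$. For $n \geq M = \max\{R_w, K_w\}$, there are no right-special factors and $\epsilon_n = 1$, giving exactly $s_w(n) = -1$, which yields both the strict decrease on $[M, |w|]$ and the quantitative claim $C_w(n+1) = C_w(n) - 1$. For $m \leq n < M$, if $R_w < K_w$ then $n \geq R_w$ kills the sum while $n < K_w$ forces $\epsilon_n = 0$, so $s_w(n) = 0$ and $C_w$ is constant on $[m,M]$; otherwise $K_w \leq n < R_w$, the sum is $\geq 1$ and $\epsilon_n = 1$, so $s_w(n) \geq 0$ and $C_w$ is non-decreasing.

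The only delicate points are handling $n=0$ correctly (where the empty word acts as the right-special factor contributing $|\A| - 1 \geq 1$ to $s_w(0)$, guaranteeing $C_w(0) < C_w(1)$) and stating the two monotonicity observations cleanly; everything else is a direct inspection of the identity for $s_w(n)$, so I expect no substantive obstacle beyond this bookkeeping.
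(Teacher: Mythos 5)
The paper does not prove this proposition at all---it is quoted verbatim from de~Luca \cite[Proposition~4.2]{aD99onth}---so there is no in-paper argument to compare against, and your write-up supplies a complete, correct, self-contained proof that is essentially the standard (indeed, de~Luca's original) one. The first-difference identity $s_w(n)=\sum_{u\ \text{right-special}}\bigl(d^+_w(u)-1\bigr)-\epsilon_n$, combined with your two monotonicity facts (lengths admitting a right-special factor form the initial segment $[0,R_w)$ because the length-$(n-1)$ suffix of a right-special factor is right-special, and lengths of unrepeated suffixes form the final segment $[K_w,|w|]$), yields all five assertions by a sign inspection in each regime, with the $n=0$ case handled by the empty word being right-special of valence $|\mathcal{A}|\geq 2$; I see no gap.
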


As a first step towards obtaining a combinatorial characterization of generalized trapezoidal words, we describe, in the following theorem, the finite words $w$ satisfying the so-called \textit{RK-condition} $|w| = R_w + K_w + |\Alph(w)| - 2$.

\begin{theorem} \label{T:RK-characterization} Suppose $w$ is a finite word with $\Alph(w) := \A$, $|\A| \geq 2$. Then $|w| = R_w + K_w + |\A| - 2$ if and only if the factor complexity function $C_w$ of $w$ satisfies: 
\begin{center}
\begin{tabular}{ll}
$C_w(0)=1$, &~ \\ 
$C_w(i) = |\A| + i - 1$ &~for $1 \leq i \leq m$, \\
$C_w(i+1)=C_w(i)$     &~ for $m \leq i \leq M-1$, \\
$C_w(i+1)=C_w(i)-1$   &~ for $M \leq i \leq |w|$, \\
\end{tabular}
\end{center}
where $m=\min\{K_w,R_w\}$ and $M=\max\{K_w,R_w\}$. Moreover, $C_w(R_w) = C_w(K_w)$ and the maximal value of $C_w$ is $m+|\A|-1$.
\end{theorem}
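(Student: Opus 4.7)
The plan is to combine de Luca's Proposition~\ref{P:deLuca1} with the boundary values $C_w(0) = 1$, $C_w(1) = |\A|$, and $C_w(|w|) = 1$ to derive the inequality $|w| \geq R_w + K_w + |\A| - 2$, and then observe that the trapezoidal shape is precisely the equality case.

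First I would invoke Proposition~\ref{P:deLuca1}, which gives that $C_w$ is strictly increasing on $[0, m]$, non-decreasing on $[m, M]$, and satisfies $C_w(n+1) = C_w(n) - 1$ on $[M, |w|]$. From the last property together with $C_w(|w|) = 1$ I get $C_w(M) = |w| - M + 1$. From strict (integer) increase on $[1, m]$ anchored at $C_w(1) = |\A|$, I get $C_w(m) \geq |\A| + m - 1$ (each of the $m-1$ steps contributes at least $1$). Monotonicity on $[m, M]$ then yields
\[
|w| - M + 1 \;=\; C_w(M) \;\geq\; C_w(m) \;\geq\; |\A| + m - 1,
\]
and rearranging gives $|w| \geq m + M + |\A| - 2 = R_w + K_w + |\A| - 2$.

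Next I would note that the equality $|w| = R_w + K_w + |\A| - 2$ forces both of the above inequalities to be equalities. The condition $C_w(m) = |\A| + m - 1$ means every one of the $m-1$ strict-increase steps on $[1, m]$ is by exactly $1$, yielding $C_w(i) = |\A| + i - 1$ on $[1, m]$; and $C_w(M) = C_w(m)$, combined with the non-decreasing behaviour on $[m, M]$, forces $C_w$ to be constant on $[m, M]$. Conversely, starting from the stated trapezoidal shape, a direct telescoping from $C_w(0) = 1$ through $C_w(m) = |\A|+m-1 = C_w(M)$ down to $C_w(|w|) = 1$ recovers $|w| = m + M + |\A| - 2$, completing the equivalence. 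For the ``moreover'' statements, constancy on $[m, M]$ gives $C_w(R_w) = C_w(K_w) = |\A| + m - 1$, and this value is the maximum of $C_w$ since $C_w$ is strictly increasing up to $n = m$ and strictly decreasing beyond $n = M$.

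The main obstacle should be purely bookkeeping: anchoring the strict-increase bound correctly at $C_w(1) = |\A|$ so that the telescoping counts the right number of unit steps; and handling the degenerate cases $m = M$ (where the constant interval collapses to a single point, so the trapezoid becomes an isosceles triangle) and $m = 1$ (where the strict-increase branch has no interior step to verify beyond the initial value).
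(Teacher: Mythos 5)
Your proof is correct and follows essentially the same route as the paper's: both rest on Proposition~\ref{P:deLuca1} together with the anchors $C_w(1)=|\A|$ and $C_w(|w|)=1$ and a telescoping count, with the trapezoidal shape emerging as the equality case of $|w| \geq m + M + |\A| - 2$. The only difference is organizational --- you treat $m=\min\{R_w,K_w\}$ and $M=\max\{R_w,K_w\}$ uniformly and extract constancy on $[m,M]$ from the pinching $C_w(M)=C_w(m)$, whereas the paper splits into the cases $R_w\leq K_w$ and $K_w<R_w$ and argues each separately; your unified version is, if anything, slightly cleaner.
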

\begin{proof}
($\Rightarrow$): Suppose that $|w|=R_w+K_w+|\A|-2$. 
We first consider the case $R_w \leq K_w$ so that $m=R_w$ and $M=K_w$. By Proposition~\ref{P:deLuca1}, we know that $C_w$ is strictly increasing on the interval $[0,R_w]$ and is constant on the interval $[R_w,K_w]$, and then strictly decreasing on the interval $[K_w,|w|]$; in particular, 
\begin{equation} \label{eq:RK-characterization-1}
C_w(i+1) = C_w(i) - 1 \quad \mbox{for $i \in [K_w, |w|]$}.
\end{equation}
It remains to show that $C_w(i)= |\A| + i -1$ for $i \in [1, R_w]$. Since $C_w(1) = |\A|$, we have $C_w(i) \geq |\A| + i - 1$ for $i \in [1, R_w]$, so $C_w(R_w) \geq |\A| + R_w - 1$, and in fact, we have $C_w(R_w) = |\A| + R_w - 1$. Indeed, from Equation~\eqref{eq:RK-characterization-1} and the hypothesis $|w| = R_w + K_w + |\A| - 2$, we deduce that 
\[
1 = C_w(|w|) = C_w(K_w) - (|w| - K_w) = C_w(K_w) - R_w - |\A| + 2.
\]
Hence, since $C_w(K_w) = C_w(R_w)$, it follows that $C_w(R_w) = |\A| + R_w -1$. This implies that $C_w(i) = |\A| + i - 1$ for $i \in [1,R_w]$. For if not, then there exists an $r < R_w$ such that $C_w(r) >  |\A| + r -1$ and $C_w(r+n) \geq C_w(r) + n$ for $n=1,2, \ldots, R_w-r$ (e.g., see \cite{aD99onth}) implying that 
\[
C_w(R_w) \geq  C_w(r) + R_w - r > (|\A| + r - 1) + R_w - r = |\A| + R_w - 1, 
\]
which is impossible because $C_w(R_w) = |\A| + R_w - 1$.

Now let us consider the case $K_w < R_w$, so that $m=K_w$ and $M=R_w$. By Proposition~\ref{P:deLuca1}, we know that $C_w$ is strictly increasing on the interval $[0,K_w]$ and is non-decreasing on the interval $[K_w, R_w]$, and then strictly decreasing on the interval $[R_w,|w|]$; in particular, 
\begin{equation} \notag \label{eq:RK-characerization-2}
C_w(i+1) = C_w(i) - 1 \quad \mbox{for $i \in [R_w, |w|]$}.
\end{equation}
Moreover, by the hypothesis $|w| = R_w + K_w + |\A| - 2$, it is easy to see that the maximal value of the complexity is $C_w(R_w) = |w| - R_w + 1 = K_w + |\A| - 1$. But since $K_w < R_w$, we have $C_w(K_w) \geq K_w + |\A| - 1$, and therefore, $C_w(R_w) = C_w(K_w) = |\A| + K_w - 1$. Using a similar argument to that used in the previous case, it follows that $C_w(i) = |\A| + i - 1$ for $i \in [1,K_w]$.

($\Leftarrow$): To prove the converse statement, let us first suppose that $K_w < R_w$. Then by assumption, $C_w(i) = |\A| + i - 1$ for $i \in [1, K_w]$, and so $C_w(K_w) = |\A| + K_w - 1$. Moreover, since $C_w(i+1) = C_w(i)$ for $i \in [K_w, R_w -1]$, it follows that $C_w(K_w) = C_w(R_w)$. Furthermore, by the condition $C_w(i+1) = C_w(i) - 1$ for $i \in [R_w, |w|]$, we have $C_w(R_w) = |w| - (R_w - 1) = |w| - R_w + 1$. Hence, $|w| - R_w + 1 = K_w + |\A| - 1$, i.e., $|w| = R_w + K_w + |\A| - 2$. On the other hand, if we suppose now that $K_w \geq R_w$, we easily deduce from the hypothesis that $C_w(R_w) = R_w + |\A| - 1 = C_w(K_w) = |w| - K_w + 1$, and therefore $|w| = R_w + K_w + |\A| - 2$. 
\end{proof}

\begin{corollary} \label{C:RK-condition} Suppose $w$ is a non-empty finite word. If $|w| = R_w + K_w + |\Alph(w)| - 2$, then $w$ is a GT-word. 
\end{corollary}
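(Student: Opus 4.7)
The plan is to deduce this directly from Theorem~\ref{T:RK-characterization} with very little extra work, since that theorem already transforms the RK-condition into exactly the complexity pattern required by Definition~\ref{D:trapezoidal}. The only issue is that Theorem~\ref{T:RK-characterization} assumes $|\A|\ge 2$, whereas the corollary is stated for an arbitrary non-empty word, so I first need to rule out the unary case before invoking it.

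First I would show that the hypothesis $|w|=R_w+K_w+|\Alph(w)|-2$ already forces $|\Alph(w)|\ge 2$. If instead $w=a^n$ for some $n\ge 1$, then $w$ has no right special factors at all (there is only one letter available for extension), so $R_w=0$, while the shortest unrepeated suffix of $w$ is $w$ itself, giving $K_w=n$. Plugging in, $R_w+K_w+|\Alph(w)|-2=n-1\neq n=|w|$, contradicting the hypothesis. Hence $|\Alph(w)|\ge 2$ and Theorem~\ref{T:RK-characterization} applies.

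Invoking that theorem with $m=\min\{R_w,K_w\}$ and $M=\max\{R_w,K_w\}$ immediately yields the four complexity relations $C_w(0)=1$, $C_w(i)=|\A|+i-1$ for $1\le i\le m$, $C_w(i+1)=C_w(i)$ for $m\le i\le M-1$, and $C_w(i+1)=C_w(i)-1$ for $M\le i\le |w|$, which is precisely the clause listed in Definition~\ref{D:trapezoidal}. To close the argument I would then just verify the side conditions on $m$ and $M$ imposed by the definition: clearly $m\le M$, and $m\ge 1$ because (i) $|\Alph(w)|\ge 2$ makes the empty word a right special factor of $w$, so $R_w\ge 1$, and (ii) for any non-empty $w$ the empty suffix occurs more than once, so $K_w\ge 1$. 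Therefore $w$ satisfies Definition~\ref{D:trapezoidal} and is a GT-word.

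There is essentially no real obstacle here; this corollary is a direct repackaging of the forward implication of Theorem~\ref{T:RK-characterization}. The only care needed is in the small bookkeeping above: handling the unary alphabet separately (via a one-line contradiction) and checking that the values $m=\min\{R_w,K_w\}$ and $M=\max\{R_w,K_w\}$ supplied by the theorem are positive integers with $m\le M$, as required by the definition of a GT-word.
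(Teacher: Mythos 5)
Your treatment of the case $|\Alph(w)|\ge 2$ is correct and is exactly the paper's argument: the forward implication of Theorem~\ref{T:RK-characterization} gives the complexity pattern of Definition~\ref{D:trapezoidal}, and the side conditions $1\le m\le M$ are immediate. The problem is your handling of the unary case. You claim that $w=a^n$ has $R_w=0$ and hence cannot satisfy the hypothesis, but that uses de Luca's convention, not this paper's. Here $R_w$ is defined as the smallest \emph{positive} integer $r$ such that $w$ has no right special factor of length $r$; a unary word has no right special factors at all, so $R_w=1$, not $0$. (The paper makes this point explicitly in the note immediately following the corollary.) With $R_w=1$ and $K_w=n$ one gets $R_w+K_w+|\Alph(w)|-2=1+n+1-2=n=|w|$, so every unary word \emph{does} satisfy the RK-condition, and your claimed contradiction evaporates.

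Consequently your proof never establishes the conclusion for unary words, which are genuinely in the scope of the statement. The fix is short and is what the paper does: observe that any $w$ with $|\Alph(w)|=1$ satisfies the hypothesis as computed above, and is a GT-word because its complexity function is constant on $[1,|w|]$ (Remark~\ref{R:one-letter-words}). So the corollary is not vacuous in the unary case; it just needs the remark rather than Theorem~\ref{T:RK-characterization}.
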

\begin{proof}
In the case when $|\Alph(w)| \geq 2$, the result follows immediately from Theorem~\ref{T:RK-characterization}. It remains to consider the case when $|\Alph(w)| =1$. Any such word has length equal to $R_w + K_w + |\Alph(w)| - 2$ where $R_w = 1$ and $K_w = |w|$, and clearly all such words are GT-words since the graphs of their complexity functions, being constant, satisfy the trapezoidal property (see Remark~\ref{R:one-letter-words}). 
\end{proof}

It should be noted here that we are using a slightly different convention for $R_w$ compared to  \cite{aD99onth}; in that paper, a power of a letter had $R_w=0$, not $1$, since the empty word was considered to be a factor that is not right special in such a word. However, our convention (defining $R_w$ to be the smallest \textit{positive} integer such that $w$ does not have a right special factor of length $R_w$) allows us to view single-letter words as GT-words satisfying the RK-condition.

Let $w$ be a finite word with $\Alph(w) := \A$, $|\A| \geq 2$. A factor $u$ of $w$ is said to be \textit{right-extendable} if there exists a letter $x \in \A$ such that $ux$ is a factor of $w$ and $u$ is said to have \textit{right-valence} $j \geq 0$ if $u$ is right-extendable in $w$ by $j$ distinct letters (e.g., see \cite{aD99onth}). \textit{Left special factors}, \textit{left-extendable factors}, and \textit{left-valences} are all similarly defined. A factor of $w$ is said to be \textit{bispecial} if it is both left and right special. Clearly, any left (resp.~right) special factor has left (resp.~right) valence at least $2$. In the case when $|\A| = 2$, left and right special factors have valences equal to $2$ since any factor of a word is only extendable to the left or right in at most two different ways.

For the subset of GT-words satisfying the RK-condition, we have the following consequence of Theorem~\ref{T:RK-characterization} and Proposition~\ref{P:deLuca1}.

\begin{proposition} \label{P:right-special-characterization} %(\textit{cf.} \cite[Corollary~4]{fD02acom})
Suppose $w$ is a finite word with $|\Alph(w)| \geq 2$. Then $|w|=R_w+K_w+|\Alph(w)|-2$ if and only if there exists exactly one right special factor of $w$ of length~$i$ for every (non-negative) integer $i < R_w$ and each non-empty right special factor of $w$ has right-valence $2$.
\end{proposition}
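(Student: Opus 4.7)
My plan rests on the standard identity expressing the first differences of $C_w$ in terms of right-special factors, which I would establish as a preliminary step. Writing $\mathrm{RS}_n(w)$ for the set of right-special factors of $w$ of length $n$, $r^+(u)$ for the right-valence of $u$, and $\epsilon_n := 1$ if $n \geq K_w$ and $0$ otherwise, the identity is
\[
C_w(n+1) - C_w(n) \;=\; \sum_{u \in \mathrm{RS}_n(w)} (r^+(u) - 1) \;-\; \epsilon_n \qquad (0 \leq n \leq |w|-1).
\]
This follows from the bijection between factors of $w$ of length $n+1$ and pairs $(u,a)$ with $u$ a length-$n$ factor of $w$ and $ua$ also a factor (so $C_w(n+1) = \sum_{u \in F_n} |\mathrm{ext}^+(u)|$), together with the observation that the only length-$n$ factor of $w$ that fails to be right-extendable is the suffix of $w$ of length $n$, and this happens exactly when $n \geq K_w$.

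For the $(\Rightarrow)$ direction, I would assume $|w| = R_w + K_w + |\A| - 2$ and invoke Theorem~\ref{T:RK-characterization} to pin down the shape of $C_w$. Substituting into the identity: at $n=0$ the empty word is the unique right-special factor (right-valence $|\A|$), which matches $C_w(1) - C_w(0) = |\A| - 1$. For $1 \leq n \leq R_w - 1$, a short case split on $R_w \leq K_w$ versus $K_w < R_w$ shows that in both sub-cases $\sum_{u \in \mathrm{RS}_n(w)}(r^+(u) - 1) = 1$; since each summand is a positive integer, there is exactly one right-special factor of length $n$ and it has right-valence $2$. For $n \geq R_w$, $\mathrm{RS}_n(w) = \emptyset$ by definition of $R_w$.

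For the $(\Leftarrow)$ direction, substituting the assumed right-special structure into the identity immediately yields $C_w(1) - C_w(0) = |\A| - 1$, then $C_w(n+1) - C_w(n) = 1 - \epsilon_n$ for $1 \leq n \leq R_w - 1$, and $C_w(n+1) - C_w(n) = -\epsilon_n$ for $R_w \leq n \leq |w|-1$. Summing these first differences and splitting again on $R_w \leq K_w$ vs.\ $K_w < R_w$ verifies that $C_w$ matches the complexity profile of Theorem~\ref{T:RK-characterization}, whose $(\Leftarrow)$ half then delivers the RK-condition.

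The argument is essentially computational once the factor-counting identity is in place; the only points requiring any care are keeping the two cases $R_w \leq K_w$ and $K_w < R_w$ separate and remembering that the empty word is itself right-special (with right-valence $|\A|$, possibly $> 2$), which is exactly why the proposition restricts the valence condition to \emph{non-empty} right-special factors.
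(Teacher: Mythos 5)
Your proof is correct and follows essentially the same route as the paper's: both arguments rest on the fact that the first difference $C_w(n+1)-C_w(n)$ is governed by the right-special factors of length $n$ together with the unioccurrent length-$n$ suffix (present exactly when $n \geq K_w$), combined with the complexity profile supplied by Theorem~\ref{T:RK-characterization}. The difference is presentational: you package this bookkeeping as an explicit first-difference identity and compute directly in both directions, whereas the paper argues the forward direction by contradiction and disposes of the converse with a brief appeal to Proposition~\ref{P:deLuca1}; your treatment of the converse is, if anything, the more carefully justified of the two.
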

\begin{proof}
($\Rightarrow$): Let $|w|=R_w+K_w+|\Alph(w)|-2$, $m=\min\{R_w,K_w\}$, and $M=\min\{R_w,K_w\}$. Suppose $w$ contains two distinct right special factors of the same length $i$ for some $i < R_w$. If $i < m$, then since each factor of $w$ of length less than $m$ is right-extendable in $w$, we would have $C_w(i+1) - C_w(i) > 1$, contradicting the fact that $C_w(n+1) - C_w(n) = 1$ for each $n \in [1,m-1]$ by Theorem~\ref{T:RK-characterization}. On the other hand, if $m \leq i < R_w$ (in which case $m=K_w$ and $M=R_w$), then there being two distinct right special factors of length $i$ implies that $C_w(i+1) - C_w(i) \geq 1$. Indeed, since all factors of length $i$ of $w$, except the suffix of $w$ of length $i$, are right-extendable in $w$, the two different right special factors of length $i$ would increase the complexity of $w$ by at least $1$ when going from factors of length $i$ to factors of length $i+1$. But this contradicts that fact that $C_w$ must be constant on the interval $[K_w, R_w]$ by Theorem~\ref{T:RK-characterization}. So $w$ contains exactly one right special factor of each length $i < R_w$. Furthermore, using similar reasoning as above, we deduce that all non-empty right special factors of $w$ must have right-valence $2$.

\noindent
($\Leftarrow$:) Conversely, if $w$ contains exactly one right special of each length $i < R_w$ with each non-empty right special factor having right-valence $2$, then using Proposition~\ref{P:deLuca1}, we deduce that $C_w$ must satisfy the conditions in Theorem~\ref{T:RK-characterization}, and hence $|w| = R_w + K_w + |\Alph(w)| - 2$.
\end{proof}

Recall that binary trapezoidal words are precisely those that satisfy $|w| = R_w + K_w$ (see \cite{fD02acom, aD99onth}), so Proposition~\ref{P:right-special-characterization} (above) applies to all binary trapezoidal words, but it is not true in general for all GT-words $w$ with $|\Alph(w)| > 2$ (only those that satisfy the RK-condition). For instance, the GT-word $w=abbac$ does not satisfy the RK-condition (since $R_w=2$, $K_w = 1$, and $R_w + K_w + |\Alph(w)| - 2  = 4 \ne |w|$) and it has two different right special factors of length $1$ (namely the letters $a$ and $b$). We also note that GT-words that do not satisfy the RK-condition can also contain right special factors with right-valences greater than $2$ (and also left special factors with left-valences greater than $2$). For example, in the GT-word $u=ababadac$ (which does not satisfy the RK-condition), the letter $a$ is a right special factor with right-valence~$3$. The letters $b$ and $d$ are each right-extendable only by the letter $a$ in $u$, whereas the letter $c$ is not right-extendable as a factor of $u$, so there is only a jump of $1$ between $C_u(1) = 4$ and $C_u(2) = 5$.

As introduced by de~Luca~\cite{aD99onth}, one can define two parameters, $L_w$ and $H_w$, analogous to $R_w$ and $K_w$, when considering left factors instead of right factors. Specifically, for a given finite word $w$ with $|\Alph(w)| \geq 2$, $L_w$ is defined to be the smallest positive integer $\ell$ such that $w$ has no left special factor of length $\ell$ and $H_w$ is the shortest unrepeated prefix of $w$. In \cite[Proposition~4.6]{aD99onth}, de~Luca proved that any finite word $w$ satisfies $|w| \geq R_w + K_w$ and also $|w| \geq L_w + H_w$. Moreover, $C_w(R_w) = C_w(L_w)$ and $\max\{R_w, K_w\} = \max\{L_w, H_w\}$ \cite[Corollary~4.1]{aD99onth}. 

Using the \textit{dual} of Proposition~\ref{P:deLuca1}, in which $R_w$ is replaced by $L_w$ and $K_w$ is replaced by $H_w$ (see \cite{aD99onth}), we can prove (in a symmetric way) the corresponding dual of Theorem~\ref{T:RK-characterization}, and subsequently we have the following dual of Proposition~\ref{P:right-special-characterization}. 

\begin{proposition} \label{P:left-special-characterization} %(\textit{cf.} \cite[Corollary~4]{fD02acom})
Suppose $w$ is a finite word with $|\Alph(w)| \geq 2$. Then $|w|=L_w+H_w+|\Alph(w)|-2$ if and only if there exists exactly one left special factor of $w$ of length~$i$ for every (non-negative) integer $i < L_w$ and each non-empty left special factor of $w$ has left-valence $2$. \qed
\end{proposition}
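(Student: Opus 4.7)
The plan is to mirror the proof of Proposition~\ref{P:right-special-characterization} almost verbatim, after invoking the dual of Theorem~\ref{T:RK-characterization}. The excerpt already notes that this dual (replacing $R_w, K_w$ by $L_w, H_w$) holds by a symmetric argument, and asserts that $|w|=L_w+H_w+|\A|-2$ iff $C_w$ has the same piecewise-linear shape with $m=\min\{L_w,H_w\}$ and $M=\max\{L_w,H_w\}$.

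For the direction ($\Rightarrow$), the key tool is the standard increment identity $C_w(i+1)-C_w(i)=\sum_{u}(\ell(u)-1)-\varepsilon_i$, where the sum runs over factors $u$ of length $i$, $\ell(u)$ is the left-valence of $u$, and $\varepsilon_i\in\{0,1\}$ equals $1$ precisely when the prefix of $w$ of length $i$ is unrepeated (i.e.\ when $i\geq H_w$). Suppose, for contradiction, that either two distinct left special factors of the same length $i<L_w$ exist, or some non-empty left special factor has left-valence at least $3$. I would then split into cases exactly as in the proof of Proposition~\ref{P:right-special-characterization}: if $i<m$ then $\varepsilon_i=0$ and the supposed extra left-extendability forces $C_w(i+1)-C_w(i)\geq 2$, contradicting the unit increase on $[1,m]$ guaranteed by the dual of Theorem~\ref{T:RK-characterization}; if $m\leq i<L_w$ (so necessarily $m=H_w$ and $M=L_w$), then $\varepsilon_i=1$ and the excess still yields $C_w(i+1)-C_w(i)\geq 1$, contradicting the flatness of $C_w$ on $[m,M]$.

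For ($\Leftarrow$), I would assume that there is exactly one left special factor of each length $i<L_w$ and each such non-empty factor has left-valence $2$. Plugging into the same increment identity and using $\varepsilon_i=0$ for $i<H_w$ and $\varepsilon_i=1$ for $H_w\leq i<L_w$, one obtains $C_w(i+1)-C_w(i)=1$ on $[1,m-1]$ and $C_w(i+1)-C_w(i)=0$ on $[m,M-1]$. Combined with strict unit decrease on $[M,|w|]$ supplied by the dual of Proposition~\ref{P:deLuca1}, the whole complexity sequence matches the profile in (the dual of) Theorem~\ref{T:RK-characterization}, whence $|w|=L_w+H_w+|\A|-2$.

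The only non-cosmetic point is verifying that the boundary correction $\varepsilon_i$ in the complexity increment formula is indeed governed by $H_w$ on the left, exactly as $K_w$ governs the right-sided correction used implicitly in Proposition~\ref{P:right-special-characterization}. This is immediate from the definitions of $H_w$ and $K_w$ (shortest unrepeated prefix/suffix), so there is no real obstacle: the argument transfers line by line from the right-special case to the left-special case.
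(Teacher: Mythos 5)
Your proposal is correct and matches the paper's intent exactly: the paper gives no separate proof for this proposition, stating only that it follows by dualizing Proposition~\ref{P:right-special-characterization} via the duals of Theorem~\ref{T:RK-characterization} and Proposition~\ref{P:deLuca1}, which is precisely the line-by-line transfer you carry out (with $H_w$ playing the role of $K_w$ for the boundary correction). The only cosmetic slip is in your increment identity: the sum should range over the left-extendable (or just the left-special) factors of length $i$, since otherwise the unrepeated prefix would be counted both in the $\ell(u)-1=-1$ term and in $\varepsilon_i$; this does not affect the argument.
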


The following useful result of de Luca~\cite{aD99onth} concerns the structure of right and left special factors of maximal length of a given word $w$ with respect to the parameters $R_w$, $K_w$, $L_w$, and $H_w$.

\begin{proposition} \label{P:deLuca4.5} \cite[Proposition~4.5]{aD99onth} Suppose $w$ is a word and $u$ is a right (left) special factor of $w$ of maximal length. Then $u$ is either a prefix (suffix) of $w$ or bispecial. If $R_w > H_w$ ($L_w > K_w$), then $u$ is bispecial.
\end{proposition}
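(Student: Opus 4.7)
My plan is to prove the dichotomy first, then derive the ``moreover'' clause from it. I treat the right-special case; the left-special one follows by a symmetric argument.

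For the dichotomy, let $u$ be a right special factor of $w$ of maximal length, so that $|u|=R_w-1$. I will argue by contradiction: suppose $u$ is neither a prefix of $w$ nor left special. Since $u$ is not a prefix, every occurrence of $u$ in $w$ is preceded by some letter; since $u$ is not left special, there is a unique letter $a\in \Alph(w)$ such that $au$ occurs in $w$, and hence \emph{every} occurrence of $u$ in $w$ is preceded by $a$. Because $u$ is right special, there exist distinct letters $x,y$ with $ux,uy$ factors of $w$; taking any occurrences of these and prepending their forced letter $a$, we see that both $aux$ and $auy$ are factors of $w$. Thus $au$ is itself right special, of length $|u|+1=R_w$, contradicting the definition of $R_w$ as the smallest positive integer such that $w$ has no right special factor of that length. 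Therefore $u$ must be a prefix of $w$ or left special; in the latter case it is bispecial since $u$ is already right special.

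For the ``moreover'' part, assume $R_w>H_w$ so that $|u|=R_w-1\ge H_w$. Suppose for contradiction that $u$ is a prefix of $w$. Then the prefix of $w$ of length $H_w$ is a prefix of $u$, and by definition of $H_w$ this short prefix occurs exactly once in $w$. Any occurrence of $u$ in $w$ would produce an occurrence of this length-$H_w$ prefix, so $u$ itself occurs exactly once. But $u$ being right special forces at least two occurrences of $u$ in $w$ (an occurrence followed by $x$ and another followed by $y$, for distinct letters $x,y$ witnessing right-valence at least $2$). This is a contradiction, so $u$ cannot be a prefix; by the dichotomy it must be bispecial.

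The main (and only real) obstacle is getting the first step right, namely the contradiction that pushes a non-left-special, non-prefix $u$ up to yield a right special factor of length $R_w$; once that observation is made, the ``moreover'' statement is an immediate repetition argument using the definition of $H_w$. The dual statement (left special factor of maximal length with $L_w>K_w$) is obtained by exchanging the roles of left/right, prefix/suffix and $(R_w,H_w)$ with $(L_w,K_w)$, using that an unrepeated suffix plays the symmetric role of an unrepeated prefix.
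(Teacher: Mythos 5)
Your proof is correct: the key step (a maximal-length right special factor that is neither a prefix nor left special would have a forced left extension $au$ that is again right special, of length $R_w$, contradicting the definition of $R_w$) is exactly the standard argument, and the ``moreover'' clause via unioccurrence of the length-$H_w$ prefix is also right. The paper itself gives no proof here --- it cites this as Proposition~4.5 of de~Luca's paper --- and your argument is essentially the one found there, so there is nothing further to compare.
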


The next result is a refinement of \cite[Proposition~4.6]{aD99onth}.

\begin{proposition} \label{P:R+K&L+H}
Suppose $w$ is a non-empty finite word. Then
\[
|w| \geq R_w + K_w + |\Alph(w)| - 2 \quad \mbox{and} \quad |w| \geq L_w + H_w + |\Alph(w)| - 2.
\]
\end{proposition}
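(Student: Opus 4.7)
The strategy is to bootstrap Proposition~\ref{P:deLuca1} using the three initial data $C_w(0)=1$, $C_w(1)=|\A|$, and $C_w(|w|)=1$. The single-letter case $|\A|=1$ is handled separately, noting that then $R_w=1$, $K_w=|w|$ (by our convention), so the right-hand side is exactly $|w|$; the inequality becomes an equality.

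So assume $|\A|\ge 2$, and set $m=\min\{R_w,K_w\}$, $M=\max\{R_w,K_w\}$. First I would read off a lower bound at level $m$: because $C_w(0)=1$, $C_w(1)=|\A|$, and $C_w$ is strictly increasing on $[0,m]$ by Proposition~\ref{P:deLuca1}, each step from $n=1$ to $n=m$ adds at least $1$, giving
\[
C_w(m)\;\ge\;|\A|+m-1.
\]
Next, $C_w$ is non-decreasing on $[m,M]$, so $C_w(M)\ge C_w(m)\ge |\A|+m-1$.

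Then I would read off the exact value at $M$ using the tail: by Proposition~\ref{P:deLuca1}, $C_w(n+1)=C_w(n)-1$ for every $n\in[M,|w|]$, and $C_w(|w|)=1$ since $w$ is its own only factor of length $|w|$. Telescoping gives $C_w(M)=|w|-M+1$. Combining with the lower bound from the previous paragraph yields
\[
|w|-M+1\;\ge\;|\A|+m-1,\qquad\text{i.e.,}\qquad |w|\;\ge\;M+m+|\A|-2\;=\;R_w+K_w+|\A|-2,
\]
which is the first inequality. The second inequality, $|w|\ge L_w+H_w+|\A|-2$, follows by the symmetric argument applied to the reversal $\widetilde{w}$ of $w$, using the dual of Proposition~\ref{P:deLuca1} in which $R_w$ is replaced by $L_w$ and $K_w$ by $H_w$ (as invoked in the proof of Proposition~\ref{P:left-special-characterization}).

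There is essentially no obstacle here: the argument is a direct refinement of de Luca's $|w|\ge R_w+K_w$, the only new input being that the initial jump $C_w(1)-C_w(0)=|\A|-1$ already contributes $|\A|-2$ extra units beyond what the ``+1 per step'' accounting provides. The only care required is in the boundary case $|\A|=1$, which is why we begin by dispatching it using our convention that $R_w=1$ (not $0$) for single-letter words.
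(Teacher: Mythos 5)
Your proof is correct and follows essentially the same route as the paper's: dispatch the single-letter case via the convention $R_w=1$, $K_w=|w|$, then for $|\Alph(w)|\geq 2$ combine the lower bound $C_w(M)\geq C_w(m)\geq |\Alph(w)|+m-1$ (from the strictly increasing and non-decreasing phases of Proposition~\ref{P:deLuca1}) with the exact value $C_w(M)=|w|-M+1$ from the decreasing tail, and handle the $L,H$ inequality by symmetry. The only cosmetic difference is that you make the telescoping from $C_w(|w|)=1$ explicit where the paper states the identity directly.
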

\begin{proof}
We will prove only the first inequality since the second one is proved by a symmetric argument. If $|\Alph(w)| = 1$, then $R_w=1$ and $K_w = |w|$, so $|w| = R_w + K_w + |\Alph(w)| - 2$. Now suppose that $|\Alph(w)| \geq 2$ and let $m=\min\{R_w,K_w\}$, $M = \max\{R_w, K_w\}$. Since $C_w(1) = |\Alph(w)|$, it follows from Proposition~\ref{P:deLuca1} that $C_w(i) 
\geq |\Alph(w)| + i - 1$ for $i = 1, 2, \ldots, m$, and so $C_w(m) \geq |\Alph(w)| + m - 1$. Moreover, since $C_w(i+1) \geq C_w(i)$ for all integers $i \in [m,M-1]$, we have $C_w(M) \geq C_w(m)$. Furthermore, by Proposition~\ref{P:deLuca1}, we have $C_w(i+1) = C_w(i) - 1$ for all integers~$i \in [M,|w|]$, and therefore $C_w(M) = |w| - (M - 1) = |w| - M + 1$. Hence 
\begin{align*}
|w| &= C_w(M) + M - 1 \\ 
     &\geq C_w(m) + M - 1  \\ 
     &\geq |\Alph(w)| + m - 1 + M - 1 \\
     &= |\Alph(w)| + m + M - 2 \\
     &= |\Alph(w)| + R_w + K_w - 2.
\end{align*}
\end{proof}

Let $|w|_x$ denote the number of occurrences of a letter $x$ in a non-empty finite word $w$.

\begin{definition}[Heart] \label{D:heart} Let $w$ be a non-empty finite word. Suppose that $r$ is the longest (possibly empty) prefix of $w$ such that $|w|_x = 1$ for all $x \in \Alph(r)$ and that $s$ is the longest (possibly empty) suffix of $w$ such that $|w|_x = 1$ for all $x \in \Alph(s)$. If $|w| > |\Alph(w)|$, then there exists a unique (non-empty) word $v$ with  $|v| < |w|$ such that $w=rvs$ and we call $v$ the \textbf{\textit{heart}} of $w$. Otherwise, if $|w| = |\Alph(w)|$, then $r=s=w$ and we define the \textbf{\textit{heart}} of $w$ to be itself.
\end{definition}

\begin{note} Roughly speaking, for any finite word $w$ with $|w| > |\Alph(w)|$ (i.e., for any finite word $w$ that contains at least two occurrences of some letter), the heart of $w$ is defined to be the unique (non-empty) factor of $w$ that remains if we delete the longest prefix and the longest suffix of $w$ that contain letters only occurring once in $w$. 
\end{note}

Equivalently, the \textit{heart} of a non-empty finite word $w$ is the unique (non-empty) factor $v$ of $w$ such that $w=rvs$ where $rv$ is the longest prefix of $w$ such that $K_{rv} > 1$ and $vs$ is the longest suffix of $w$ such that $H_{vs} > 1$, i.e., $\rev{vs}$ is the longest prefix of $\rev{w}$ such that $K_{\rev{vs}} > 1$ (where $\rev{u}$ denotes the \textit{reversal} of a given word~$u$).

\begin{example} Consider the word $w = ebbacbadf$. By deleting from $w$ the longest prefix and the longest suffix that contain letters only occurring once in $w$, we determine that the heart of $w$ is $v=bbacba$. We also observe that $w$ can be expressed as $w=rvs$ with $r=e$, $s=df$, and where $rv=ebbacba$ is the longest prefix of $w$ such that $K_{rv} > 1$ and $vs=bbacbadf$ is the longest suffix of $w$ such that $H_{vs} > 1$. Note that the word $w$ is  a generalized trapezoidal word with $R_w = 3$ and $K_w = 1$, and we see that $|w| = R_v + K_v + |\Alph(w)| - 2$ (where $R_v=2$ and $K_v = 3$); in fact, this condition characterizes GT-words (see Theorem~\ref{T:characterization1-take2} in the next section).
\end{example}

\begin{remark} \label{R:binary-heart}
All binary trapezoidal words are equal to their own hearts, except those of the form $a^nb$ or $ab^n$ where $a$, $b$ are distinct letters and $n>1$ is an integer. Such (binary) trapezoidal words have hearts of the form $x^n$ for some letter $x \in \{a,b\}$. We also note that any word $w$ with $|w|=|\Alph(w)|$, i.e., any word $w$ that is a product of mutually distinct letters, is a GT-word (such a word $w$ has $R_w=1$ and $K_w=1$ and therefore satisfies the RK-condition).
\end{remark}

The next result generalizes \cite[Proposition~4.8]{aD99onth}. 

\begin{proposition} \label{P:RKLH} Let $w$ be a non-empty finite word with heart $v$. If $|w| = R_v + K_v + |\Alph(w)| - 2$, then
\[
\min\{R_v, K_v\} = \min\{L_v, H_v\} \quad \mbox{and} \quad |w| = L_v + H_v + |\Alph(w)| - 2.
\]
\end{proposition}
\begin{proof} We first observe that if $|w| = R_v + K_v + |\Alph(w)| - 2$, then $|v| = R_v + K_v + |\Alph(v)| - 2$ because, by definition of the heart $v$ of $w$, $|w| - |v| = |\Alph(w)| - |\Alph(v)|$, i.e., $|v| = |w| - |\Alph(w)| + |\Alph(v)|$. The result is trivial if $|\Alph(v)| = 1$ because, in this case, $R_v = L_v = 1$ and $K_v = H_v = |v|$. So we will henceforth assume that $|\Alph(v)| \geq 2$.

By \cite[Corollary~4.1]{aD99onth}, $C_v(R_v) = C_v(L_v)$ and $\max\{R_v, K_v\} = \max\{L_v, H_v\}$. Hence, since any finite word $v$ satisfies $|v| \geq L_v + H_v + |\Alph(v)| - 2$ (by Proposition~\ref{P:R+K&L+H}), we deduce that $\min\{L_v, H_v\}~\leq~\min\{R_v, K_v\}$. Indeed, we have
\begin{align*}
|v| &= R_v + K_v + |\Alph(v)| - 2 \\ 
     &= \min\{R_v, K_v\} + \max\{R_v, K_v\} + |\Alph(v)| - 2 \\ 
     &= \min\{R_v, K_v\} + \max\{L_v, H_v\} + |\Alph(v)| - 2 \\ 
     &\geq L_v + H_v + |\Alph(v)| - 2 
\end{align*}
and thus $\min\{L_v, H_v\} \leq \min\{R_v, K_v\}$.

Suppose that $L_v \leq H_v$. Then, by \cite[Corollary~4.1]{aD99onth}, $H_v = \max\{R_v, K_v\}$, and thus, since $\min\{L_v, H_v\}~\leq~\min\{R_v, K_v\}$, we have $L_v \leq \min\{R_v, K_v\}$. Furthermore, since $L_v \leq H_v$, it follows from Theorem~\ref{T:RK-characterization} and the dual statement of Proposition~\ref{P:deLuca1} (with $L_v$ and $H_v$ instead of $R_v$ and $K_v$) that $C_v(L_v) = C_v(H_v) = C_v(R_v) = C_v(K_v) = \min\{R_v, K_v\} + |\Alph(v)| - 1$. This implies that  $\min\{R_v, K_v\} \leq L_v \leq \max\{R_v, K_v\}$ because $C_v(i) \geq |\Alph(v)| + i - 1$ for $i = 1, 2, \ldots, \min\{R_v, K_v\}$. But, as deduced above, $L_v \leq \min\{R_v, K_v\}$; thus we must have $L_v = \min\{R_v, K_v\}$, and hence $\min\{L_v, H_v\} = \min\{R_v, K_v\}$.

Let us now suppose that $H_v < L_v = \max\{L_v, H_v\} (= \max\{R_v,K_v\})$. Then, since $\min\{L_v, H_v\} \leq \min\{R_v, K_v\}$, we have $H_v \leq \min\{R_v, K_v\}$. We wish to show that $H_v = \min\{R_v, K_v\}$. Suppose not, i.e., suppose $H_v < \min\{R_v, K_v\}$. Let $m= \min\{R_v, K_v\}$. Since $v$ is a (generalized trapezoidal) word satisfying the RK-condition, it follows from Proposition~\ref{P:right-special-characterization} that there exists exactly one right special factor of $v$ of length~$i$ for every non-negative integer $i < m$, with each right special factor having right-valence~$2$. Likewise, there must exist exactly one left special factor of $v$ of length~$i$ for every non-negative integer $i < m$ (where $H_v < m \leq L_v$), with each left special factor having left-valence~$2$. But this implies that $C_v(m) = C_v(m-1)$, contradicting the fact that the complexity function of $v$ satisfies $C_v(m)=C_v(m-1)+1$ by Theorem~\ref{T:RK-characterization}. Hence we must have $H_v = \min\{R_v, K_v\}$.

Finally, since we have $\min\{L_v,H_v\} = \min\{R_v, K_v\}$ and also $\max\{L_v, H_v\} =  \max\{R_v, K_v\}$, it easily follows that $|v| = L_v + H_v + |\Alph(v)| - 2$, and hence $|w| = L_v + H_v + |\Alph(w)| - 2$.
\end{proof}

We have the following easy consequence of Proposition~\ref{P:RKLH}.

\begin{corollary} \label{C:L&H} %(\textit{cf.} \cite[Property 4.1]{fD02acom})
Let $w$ be a non-empty finite word with heart $v$. Then $|w| = R_v + K_v + |\Alph(w)| - 2$ if and only if $|w| = L_v + H_v + |\Alph(w)| - 2$. \qed
\end{corollary}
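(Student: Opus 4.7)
\emph{Plan.} The forward implication is already the content of Proposition~\ref{P:RKLH}, so all the work lies in the converse. The natural route is a reversal/duality argument: the pairs $(R_v,K_v)$ and $(L_v,H_v)$ are simply swapped when one passes from $w$ to its reversal $\rev w$, so the converse should follow by applying Proposition~\ref{P:RKLH} to $\rev w$ rather than to $w$ itself.

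More concretely, I would first record the routine identities that make the reversal trick work: $|\rev w| = |w|$, $\Alph(\rev w) = \Alph(w)$, and for any non-empty finite word $u$ one has $R_{\rev u} = L_u$, $K_{\rev u} = H_u$, $L_{\rev u} = R_u$, $H_{\rev u} = K_u$, obtained by checking that reversal is a length-preserving bijection between right special (resp.\ unrepeated suffix) factors of $u$ and left special (resp.\ unrepeated prefix) factors of $\rev u$. I also need that the \emph{heart} commutes with reversal, i.e.\ the heart of $\rev w$ is $\rev v$. This is immediate from Definition~\ref{D:heart}: if $w = rvs$ with $r$ and $s$ the longest prefix and suffix whose letters each occur only once in $w$, then $\rev w = \rev s\, \rev v\, \rev r$, and $\rev s$, $\rev r$ now play the dual prefix/suffix roles in $\rev w$ since ``letters occurring once in $w$'' is a property symmetric in the two ends of the word.

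With these dictionary entries in hand, the converse is a one-line translation. Assuming $|w| = L_v + H_v + |\Alph(w)| - 2$, the dictionary rewrites this as
\[
|\rev w| \;=\; R_{\rev v} + K_{\rev v} + |\Alph(\rev w)| - 2,
\]
so Proposition~\ref{P:RKLH} applies to $\rev w$ (with heart $\rev v$) and gives
\[
|\rev w| \;=\; L_{\rev v} + H_{\rev v} + |\Alph(\rev w)| - 2.
\]
Translating back via $L_{\rev v} = R_v$ and $H_{\rev v} = K_v$ yields $|w| = R_v + K_v + |\Alph(w)| - 2$, as required.

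The only genuine checkpoint, and the closest thing to an obstacle here, is verifying that the heart construction is symmetric under reversal; everything else is a direct book-keeping exchange of the four statistics. Since the defining property ``letters of a prefix/suffix occur only once in $w$'' is manifestly left/right symmetric, this verification is immediate, and no harder case analysis is required.
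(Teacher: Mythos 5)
Your proof is correct and matches the paper's (unwritten) argument in substance: the forward direction is exactly Proposition~\ref{P:RKLH}, and the converse is its left/right dual, which you obtain cleanly by passing to $\rev w$ using the identities $R_{\rev u}=L_u$, $K_{\rev u}=H_u$ and the fact that the heart of $\rev w$ is $\rev v$. The paper leaves the converse implicit as a "symmetric argument"; your reversal dictionary is a rigorous and economical way to make that duality precise, and all the checkpoints you flag (in particular the symmetry of the heart construction under reversal) do hold.
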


\begin{note} If a word $w$ satisfies the $RK$-condition $|w| = R_w + K_w + |\Alph(w)| - 2$ (with $R_w$ and $K_w$ instead of $R_v$ and $K_v$), it is not necessarily true that $|w| = L_w + H_w + |\Alph(w)| - 2$ (and vice versa). For example, the word $u=abbcc$ of length $|u| = 5$ is a GT-word with complexity sequence $\{C_u(n)\}_{n\geq0} = 1, 3, 4, 3, 2, 1$. This GT-word has $R_u=2$, $K_u=2$, and $R_u+K_u+1 = |u|$. On the other hand, $L_u=2$, $H_u=1$, and $L_u + K_u + 1 \ne |u|$. However, we see that $|u| = R_v + K_v + 1$ and $|u| = L_v + H_v + 1$ where $v = bbcc$ is the heart of $u$ with $R_v = L_v = K_v = H_v = 2$. 
\end{note}

In \cite[Corollary~5.3]{aD99onth}, de Luca proved that the minimal period length $\pi_w$ of any finite word $w$ satisfies $\pi_w \geq R_w + 1$, and moreover, if $\pi_w = R_w + 1$, then $|w| = R_w + K_w$. The following proposition gives a refinement of that result.

\begin{proposition} \label{P:min-period} Suppose $w$ is a non-empty finite word with minimal period length $\pi_w$. Then $\pi_w \geq R_w + |\Alph(w)| - 1$. Moreover, if $\pi_w = R_w + |\Alph(w)| - 1$, then $|w| = R_w + K_w + |\Alph(w)| - 2$.
\end{proposition}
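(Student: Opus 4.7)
The plan is to bypass any direct complexity-function analysis and reduce the claim to Proposition~\ref{P:R+K&L+H} via a simple periodicity argument on the suffix parameter $K_w$. The key preliminary lemma I would prove is the following: if $\pi_w < |w|$, then every suffix of $w$ of length at most $|w| - \pi_w$ is repeated in $w$. Indeed, such a suffix $s$ of length $k$ occurs at starting position $|w| - k + 1$, but by $\pi_w$-periodicity the factor of length $k$ starting at $|w| - k + 1 - \pi_w$ equals $s$, and this shifted starting position is $\geq 1$ precisely because $k \leq |w| - \pi_w$. Hence $s$ occurs at least twice, giving $K_w \geq |w| - \pi_w + 1$ whenever $\pi_w < |w|$.

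With this lemma in hand, the proof of $\pi_w \geq R_w + |\Alph(w)|-1$ splits into two cases. If $\pi_w < |w|$, substituting the lemma into $|w| \geq R_w + K_w + |\Alph(w)| - 2$ (Proposition~\ref{P:R+K&L+H}) yields $|w| \geq R_w + (|w| - \pi_w + 1) + |\Alph(w)| - 2$, which rearranges precisely to the desired inequality. If instead $\pi_w = |w|$, then $\pi_w = |w| \geq R_w + K_w + |\Alph(w)| - 2 \geq R_w + |\Alph(w)| - 1$, using only that $K_w \geq 1$ for a non-empty word. I expect the main obstacle to be spotting the right lemma: without it, a naive approach using the complexity inequality $C_w(n) \leq \pi_w$ (valid because factors at positions congruent modulo $\pi_w$ coincide) combined with Proposition~\ref{P:deLuca1} only delivers $\pi_w \geq \min\{R_w,K_w\} + |\Alph(w)| - 1$, which leaves a genuine gap of $R_w - K_w$ in the case $R_w > K_w$.

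For the equality statement, I would assume $\pi_w = R_w + |\Alph(w)| - 1$ and rerun the same dichotomy. If $\pi_w = |w|$, then $|w| = R_w + |\Alph(w)| - 1$, and Proposition~\ref{P:R+K&L+H} combined with $K_w \geq 1$ forces $K_w = 1$, so $|w| = R_w + K_w + |\Alph(w)| - 2$ as required. If $\pi_w < |w|$, the chain
\[
|w| \geq R_w + K_w + |\Alph(w)| - 2 \geq R_w + (|w| - \pi_w + 1) + |\Alph(w)| - 2 = |w|
\]
must collapse to equalities throughout, and the leftmost equality is exactly the conclusion $|w| = R_w + K_w + |\Alph(w)| - 2$.
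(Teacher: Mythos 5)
Your proof is correct and follows essentially the same route as the paper: your key lemma ($K_w \geq |w| - \pi_w + 1$, proved via the periodicity shift of the suffix) is exactly the inequality $\pi_w \geq |w| - K_w + 1$ that the paper cites from de Luca's Proposition~5.1, and both arguments then combine it with Proposition~\ref{P:R+K&L+H} in the same way for both the inequality and the equality case. The only difference is that you re-derive the cited inequality from scratch and split into the cases $\pi_w < |w|$ and $\pi_w = |w|$, which the paper's uniform statement of that inequality renders unnecessary.
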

\begin{proof}
For any finite word $w$ with minimal period length $\pi_w$, we have $\pi_w \geq |w| - K_w + 1$  \cite[Proposition~5.1]{aD99onth}, and by Proposition~\ref{P:R+K&L+H}, $R_w \leq |w| - K_w - |\Alph(w)| + 2$. Hence $\pi_w \geq R_w + |\Alph(w)| - 1$. Furthermore, if $\pi_w = R_w + |\Alph(w)| - 1$, then it follows from \cite[Proposition~5.1]{aD99onth} that $|w| \leq R_w + K_w + |\Alph(w)| - 2$, and therefore $|w| = R_w + K_w + |\Alph(w)| - 2$ by Proposition~\ref{P:R+K&L+H}.
\end{proof}

We are now ready to prove our main results about GT-words.

\section{Main Results} \label{S:main}

\subsection{Characterizations of GT-words}

\begin{theorem} \label{T:characterization1-take2} 
Suppose $w$ is a non-empty finite word with heart $v$. Then $w$ is a GT-word if and only if $|w| = R_v + K_v + |\Alph(w)| - 2$.
\end{theorem}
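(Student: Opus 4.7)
The plan is to reduce the theorem to Theorem~\ref{T:RK-characterization} via an explicit formula relating $C_w$ to $C_v$. The guiding intuition is that the letters of $r$ and $s$ (the parts stripped from $w$ to form its heart $v$, so that $w = rvs$) each occur exactly once in $w$, and hence they pad the complexity function in an entirely predictable way. The trivial case $|w| = |\Alph(w)|$ is handled separately: here $v = w$ and $R_w = K_w = 1$, so both sides reduce to $|w| = |\Alph(w)|$.

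Assuming $|w| > |\Alph(w)|$, the main technical step is to establish the counting formula
\begin{equation*}
C_w(n) = \begin{cases} C_v(n) + (|\Alph(w)| - |\Alph(v)|) & \text{for } 1 \leq n \leq |v|, \\ |w| - n + 1 & \text{for } |v| \leq n \leq |w|. \end{cases}
\end{equation*}
The two cases agree at $n = |v|$. Since each letter of $r$ or $s$ appears exactly once in $w$, any factor of $w$ containing such a letter is pinned down by it. For $n \leq |v|$, factors split into (a) factors lying entirely in $v$ (contributing $C_v(n)$), (b) factors starting inside $r$ (one per starting position), and (c) factors ending inside $s$ (similarly); the three classes are pairwise disjoint precisely because $n \leq |v|$ prevents a single factor from meeting both $r$ and $s$, and (b)+(c) contribute $|r| + |s| = |\Alph(w)| - |\Alph(v)|$ additional factors. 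For $n > |v|$, every factor necessarily touches $r$ or $s$, so all $|w|-n+1$ starting positions yield pairwise distinct factors.

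With the formula in hand, the ($\Leftarrow$) direction is immediate: the hypothesis is equivalent (via $|w| - |v| = |\Alph(w)| - |\Alph(v)|$) to $|v| = R_v + K_v + |\Alph(v)| - 2$, so Theorem~\ref{T:RK-characterization} pins down the shape of $C_v$, and substituting into the formula shows that $C_w$ satisfies Definition~\ref{D:trapezoidal} with $m = \min\{R_v, K_v\}$ and $M = \max\{R_v, K_v\}$. For the ($\Rightarrow$) direction, suppose $w$ is a GT-word with parameters $m_w \leq M_w$. The formula implies $C_w(|v|+1) = C_w(|v|) - 1$, so the decreasing phase of $C_w$ has begun by position $|v|$, giving $M_w \leq |v|$. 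Pulling the GT-conditions on $C_w$ back through the formula yields the trapezoidal-shape conditions of Theorem~\ref{T:RK-characterization} on $C_v$ with the same parameters $m_w, M_w$; applying Proposition~\ref{P:deLuca1} to $v$ then forces $m_w = \min\{R_v, K_v\}$ and $M_w = \max\{R_v, K_v\}$, since these are precisely where $C_v$ stops strictly increasing and begins strictly decreasing. Using $C_w(|w|) = 1$ together with the decreasing-by-one property of $C_w$ on $[M_w, |w|]$ finally yields $|w| = m_w + M_w + |\Alph(w)| - 2 = R_v + K_v + |\Alph(w)| - 2$.

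The hardest part is the clean proof of the counting formula: while the counting is elementary, care is needed to verify disjointness of the three classes for $n \leq |v|$, to check the agreement of the two cases at $n = |v|$, and to handle the edge case $|\Alph(v)| = 1$ (where Theorem~\ref{T:RK-characterization} does not directly apply, but the claim is easily verified by hand since $C_v$ is then constant on $[1, |v|]$). After that, everything follows by mechanical translation between $w$ and $v$ via the formula.
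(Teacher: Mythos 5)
Your counting formula is correct, and it is essentially the content of the paper's Lemma~\ref{L:K!=1} (the paper records $C_w(i)=C_v(i)+j$ for $i\le |v|$ and handles $n>|v|$ via Proposition~\ref{P:deLuca1}); your ($\Leftarrow$) direction then matches the paper's, which goes through Corollary~\ref{C:RK-condition}. The gap is in the ($\Rightarrow$) direction, at the step ``applying Proposition~\ref{P:deLuca1} to $v$ then forces $m_w=\min\{R_v,K_v\}$.'' Proposition~\ref{P:deLuca1} does give $M_w=\max\{R_v,K_v\}$ (strict decrease begins exactly at $\max\{R_v,K_v\}$) and $m_w\ge\min\{R_v,K_v\}$, and in the case $R_v<K_v$ its last sentence (constancy on $[\min,\max]$) also gives $m_w\le R_v$. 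But when $K_v<R_v$ the proposition only asserts that $C_v$ is \emph{non-decreasing} on $[K_v,R_v]$; it does not say that $C_v$ stops strictly increasing at $K_v$. A priori $C_v$ could satisfy $C_v(K_v+1)=C_v(K_v)+1$ and only flatten out later, giving $m_w>K_v$ and hence $|v|>R_v+K_v+|\Alph(v)|-2$. Nothing in your argument excludes this, and it is not a formal consequence of Proposition~\ref{P:deLuca1}.

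Ruling this out is the genuinely hard part of the paper's proof and requires a new idea. If $C_v(K_v+1)=C_v(K_v)+1$, then, since the length-$K_v$ suffix of $v$ is the unique factor of that length that is not right-extendable in $v$, the word $v$ must contain either two distinct right special factors of length $K_v$ each of right-valence $2$, or a single one of right-valence $3$. The paper then passes to the auxiliary word $w'=vx$ with $x\notin\Alph(v)$, which is a GT-word because its heart is $v$ (Lemma~\ref{L:K!=1}); in $w'$ the offending suffix becomes right-extendable, so $C_{w'}(K_v+1)=C_{w'}(K_v)+2$, contradicting the trapezoidal shape of $C_{w'}$. Note that you cannot simply run this on $w$ itself: if $s$ is empty (i.e., $w=rv$), the suffix in question remains non-right-extendable in $w$ and no contradiction arises. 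Your proof needs this argument, or an equivalent one, before the conclusion $m_w=\min\{R_v,K_v\}$ and hence $|w|=R_v+K_v+|\Alph(w)|-2$ can be drawn.
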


\begin{example} 
The GT-word $w=ababadac$ does not satisfy the $RK$-condition, but it does satisfy the following condition in terms of its heart $v$:
\[
|w| = R_v + K_v + |\Alph(w)| - 2.
\]
Indeed, $v=ababada$ with $R_v = 4$, $K_v = 2$, and $|w| = 4 + 2 + 2 = 8$.
\end{example}

The following lemma is needed for the proof of Theorem~\ref{T:characterization1-take2}.

\begin{lemma} \label{L:K!=1} Suppose $w$ is a non-empty finite word with heart $v$. Then $w$ is a GT-word if and only if $v$ is a GT-word. Moreover, when $w$ (or equivalently $v$) is a GT-word, the maximal interval $[m_w,M_w]$ on which $C_w$ is constant is equal to the maximal interval $[m_v,M_v]$ on which $C_v$ is constant, i.e., $m_w=m_v$ and $M_w=M_v$.
\end{lemma}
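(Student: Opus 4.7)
The plan is to compute $C_w(n)$ explicitly in terms of $C_v(n)$ and then read off the trapezoidal shape in both directions. The case $|w| = |\Alph(w)|$ is trivial: by Definition~\ref{D:heart} we have $v = w$, and such words are GT-words with $m = M = 1$, so the lemma holds. So assume $|w| > |\Alph(w)|$ and write $w = rvs$ as in Definition~\ref{D:heart}, with $k := |r|$, $p := |v|$, $\ell := |s|$. Since every letter appearing in $r$ or $s$ has multiplicity $1$ in $w$, such letters cannot appear anywhere else in $w$, and hence $\Alph(r), \Alph(v), \Alph(s)$ are pairwise disjoint; in particular, $|\Alph(w)| - |\Alph(v)| = k + \ell = |w| - |v|$.

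The crux of the proof is the two-piece identity
\[
C_w(n) = \begin{cases} C_v(n) + \bigl(|\Alph(w)| - |\Alph(v)|\bigr), & 1 \leq n \leq |v|, \\ |w| - n + 1, & |v| \leq n \leq |w|. \end{cases}
\]
To establish it, I would argue that any occurrence of a letter from $\Alph(r) \cup \Alph(s)$ sits at a single position of $w$; consequently, if two factors of length $n$ with distinct starting positions in $[1, |w|-n+1]$ are equal, then neither can contain any such letter, and both factors must lie entirely inside the occurrence of $v$ at positions $[k+1, k+p]$. Partitioning the $|w|-n+1$ starting positions accordingly, the ``inside'' positions form an interval of length $\max(0, p-n+1)$ whose distinct factors are exactly the $C_v(n)$ factors of $v$ of length $n$, while the remaining starting positions each yield a pairwise distinct factor of $w$. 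Simplifying in the regimes $n \leq |v|$ and $n > |v|$ gives the two cases, which agree at $n = |v|$ (both equal $k+\ell+1$).

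Given the identity, both directions of the lemma follow readily. If $v$ is a GT-word with constant interval $[m_v, M_v]$, then on $[1, |v|]$ the function $C_w$ is $C_v$ shifted up by the constant $k + \ell$, so $C_w(1) = |\Alph(w)|$, $C_w$ increases by $1$ on $[1, m_v]$, is constant on $[m_v, M_v]$, and decreases by $1$ on $[M_v, |v|]$; the second piece of the formula continues this strict decrease all the way to $n = |w|$, so $w$ is a GT-word with $m_w = m_v$ and $M_w = M_v$. Conversely, if $w$ is a GT-word, then from $C_w(|w|) = 1$ and the plateau height $|\Alph(w)| + m_w - 1$ one computes $M_w = |w| - |\Alph(w)| - m_w + 2 = |v| - |\Alph(v)| - m_w + 2 \leq |v|$ (using $|\Alph(v)|, m_w \geq 1$), so the entire trapezoidal portion of $C_w$ falls in the first regime of the formula; subtracting the constant $k + \ell$ then produces the trapezoidal shape of $C_v$ with the same pair $[m_w, M_w]$. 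The principal obstacle is the counting argument underlying the two-piece formula — in particular, the careful justification that factors of $w$ involving a letter of $\Alph(r) \cup \Alph(s)$ are determined by their starting position in $w$; once this is in place, the remaining verification is routine bookkeeping.
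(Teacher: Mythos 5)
Your proposal is correct and follows essentially the same route as the paper: decompose $w=rvs$, establish that $C_w(n)=C_v(n)+(|w|-|v|)$ for $n\leq|v|$ because the unioccurrent letters of $r$ and $s$ force every factor touching them to be determined by its starting position, and observe that $C_w$ strictly decreases by $1$ beyond $|v|$. If anything, your explicit two-piece formula and the computation showing $M_w\leq|v|$ make precise a step the paper leaves implicit (it instead invokes Proposition~\ref{P:deLuca1} for the tail), so no changes are needed.
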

\begin{proof}
If $v = w$, we are done. So suppose $v \ne w$. Then $|w| > |\Alph(w)| \geq 2$ and we have $w = rvs$ where $r$ and $s$ are maximal such that $|w|_x = 1$ for all $x \in \Alph(r) \cup \Alph(s)$. We distinguish three cases: $w=vs$ (where $s$ is non-empty), $w = rv$ (where $r$ is non-empty), or $w=rvs$ (where both $r$ and $s$ are non-empty). The latter two cases (when $w = rv$ or $w=rvs$) are proved similarly to the first case (when $w=vs$), so we consider only that case. By definition of $v$ and $s$, there exist distinct letters $a_1, \ldots, a_j \not\in \Alph(v)$ such that $s = a_1\cdots a_j$ (i.e., $w=va_1\cdots a_j$) where $j = |w| - |v|$. Hence, $C_w(i) = C_v(i) + j$ for $i = 1, 2, \ldots, |v|$. 

Now suppose $w$ is a GT-word. Let $[m_w, M_w]$ be the maximal interval on which $C_w$ is constant and let $[m_v,M_v]$ be the maximal interval on which $C_v$ is constant. Then since $C_v(i) = C_w(i) - j$  for $i = 1, 2, \ldots, |v|$, we see that $C_v$ increases by 1 with each $n$ in the interval $[1,m_w]$, is constant on the interval $[m_w, M_w]$, and decreases by 1 with each $n$ in the interval $[M_w,|v|]$. So $v$ is a GT-word, and moreover $m_v = m_w$ and $M_v = M_w$.

Conversely, suppose $v$ is a GT-word. Then since $C_w(i) = C_v(i) +  j$  for $i = 1, 2, \ldots, |v|$, we see that $C_w$ increases by 1 with each $n$ in the interval $[1,m_v]$, is constant on the interval $[m_v, M_v]$, and decreases by 1 with each $n$ in the interval $[M_v,|v|]$. Moreover, by Proposition~\ref{P:deLuca1}, $C_w$ must continue to decrease by $1$ with each $n$ in the interval $[|v|+1, |w|]$. Thus, $w$ is a GT-word and $m_w=m_v$, $M_w=M_v$.
\end{proof}

\begin{proof}[Proof of Theorem~\ref{T:characterization1-take2}] 
($\Leftarrow$): First suppose that $|w|=R_v+K_v+|\Alph(w)|-2$ where $v$ is the heart of $w$. By definition of $v$, $|w| - |v| = |\Alph(w)| - |\Alph(v)|$, so $|v| = R_v + K_v + |\Alph(v)| - 2$. Hence, $v$ is a GT-word by Corollary~\ref{C:RK-condition}, and therefore $w$ is also a GT-word by Lemma~\ref{L:K!=1}.

($\Rightarrow$): Conversely, suppose $w$ is a GT-word. If $|w|=|\Alph(w)|$ (i.e., if $w$ is a product of mutually distinct letters), then $w=v$ (by Definition~\ref{D:heart}) and such a GT-word has $R_v=R_w=1$ and $K_v=R_w=1$; whence $|w|=|v|=R_v+K_v+|\Alph(w)| - 2$. So we will henceforth assume that $|w| > |\Alph(w)|$, in which case $K_v > 1$.

Since $w$ is a GT-word, so too is $v$ by Lemma~\ref{L:K!=1}. It suffices to show that $|v| = R_v + K_v + |\Alph(v)| - 2$ because it follows from this equality that $|w| = R_v + K_v + |\Alph(w)| - 2$ by noting that $|w| - |v| = |\Alph(w)| - |\Alph(v)|$.

If $|\Alph(v)| = 1$, then $R_v=1$ and $K_v=|v|$, and hence $|v| = R_v + K_v + |\Alph(v)| - 2$. Now suppose that $|\Alph(v)| \geq 2$ and let $[m_v,M_v]$ denote the maximal interval on which $C_v$ is constant. If $R_v < K_v$, then by Proposition~\ref{P:deLuca1} (and the fact that $v$ is a GT-word), the complexity $C_v$ of $v$ satisfies the conditions of Theorem~\ref{T:RK-characterization} with $m=m_v=R_v$ and $M=M_v=K_v$; whence $|v|=R_v + K_v + |\Alph(v)|-2$. On the other hand, if $R_v \geq K_v$, then $M_v=R_v$ (by Proposition~\ref{P:deLuca1} again) and we claim that $m_v=K_v$. Suppose not, i.e., suppose that $m_v \ne K_v$. If $m_v < K_v$ then $C_v(m_v) = C_v(m_v+1)$. But since each factor of $v$ of length $m_v$ is right-extendable in $v$, this implies that $v$ has no right special factor of length $m_v$; a contradiction (since $m_v < K_v \leq R_v$). Now suppose $m_v > K_v$. Then we have $C_v(K_v+1) = C_v(K_v) + 1$. Let $s$ denote the suffix of $v$ of length $K_v$. Since all factors of $v$ of length $K_v$ except $s$ are right-extendable in $v$, the equality $C_v(K_v+1) = C_v(K_v) + 1$ implies that $v$ contains either two different right special factors of length $K_v$, each with right-valence $2$, or only one right special factor of length $K_v$ with right-valence $3$. But then so too would any GT-word $w'$ with heart $v$ and $K_{w'}=1$ (e.g., $w'=vx$ where $x \not\in\Alph(v)$). And  since $s$ is right-extendable in any such $w'$, it follows that $C_{w'}(K_v+1) = C_{w'}(K_v) + 2$; a contradiction. Thus $m_v=K_v$, and so by Proposition~\ref{P:deLuca1} (and the fact that $v$ is a GT-word), the complexity $C_v$ of $v$ satisfies the conditions of Theorem~\ref{T:RK-characterization} with $m=m_v=K_v$ and $M=M_v=R_v$. Hence $|v| = R_v + K_v + |\Alph(v)| - 2$.
\end{proof} 

In view of Corollary~\ref{C:L&H}, we have the following straightforward consequence of Theorem~\ref{T:characterization1-take2}.

\begin{corollary} \label{C:characterization1-take2} 
Suppose $w$ is a non-empty finite word with heart $v$. Then $w$ is a GT-word if and only if $|w| = L_v + H_v + |\Alph(w)| - 2$. \qed
\end{corollary}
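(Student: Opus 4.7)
The plan is extremely short: this corollary is an immediate chaining of two results already established in the paper. By Theorem~\ref{T:characterization1-take2}, being a GT-word is equivalent to the $RK$-condition on the heart, $|w| = R_v + K_v + |\Alph(w)| - 2$. By Corollary~\ref{C:L&H} (which was proved as a consequence of Proposition~\ref{P:RKLH}), that $RK$-condition on the heart is equivalent to the corresponding $LH$-condition on the heart, $|w| = L_v + H_v + |\Alph(w)| - 2$. So I would just state: $w$ is a GT-word $\Longleftrightarrow$ $|w| = R_v + K_v + |\Alph(w)| - 2$ $\Longleftrightarrow$ $|w| = L_v + H_v + |\Alph(w)| - 2$, and close with the \qed symbol (as the author has essentially already done by appending \qed to the statement).

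There is no real obstacle here. The genuine work was done in Proposition~\ref{P:RKLH}, which uses the symmetry between right-special and left-special factors together with the known identities $C_v(R_v)=C_v(L_v)$ and $\max\{R_v,K_v\}=\max\{L_v,H_v\}$ from \cite{aD99onth}, plus a careful case analysis on whether $L_v \leq H_v$ or $H_v < L_v$. Once that proposition is in hand, Corollary~\ref{C:L&H} follows, and then the present corollary is a one-step substitution into Theorem~\ref{T:characterization1-take2}. Since the passage from $v$ to $w$ only adds $|\Alph(w)|-|\Alph(v)|$ to both the length and to the relevant parameters in a trivial way (the definition of the heart ensures $|w|-|v| = |\Alph(w)|-|\Alph(v)|$), no extra bookkeeping is required beyond what Proposition~\ref{P:RKLH} already handles.

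In short, the proposed proof is:

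\emph{Proof.} By Theorem~\ref{T:characterization1-take2}, $w$ is a GT-word if and only if $|w|=R_v+K_v+|\Alph(w)|-2$, and by Corollary~\ref{C:L&H}, this latter identity holds if and only if $|w|=L_v+H_v+|\Alph(w)|-2$. \qed
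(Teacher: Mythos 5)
Your proposal is correct and matches the paper exactly: the authors present this corollary as an immediate consequence of Theorem~\ref{T:characterization1-take2} combined with Corollary~\ref{C:L&H}, which is precisely the two-step chain of equivalences you give.
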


The next result describes the generalized trapezoidal words $w$ that are ``triangular'' in the sense that the graphs of their factor complexity functions are triangles on the interval {\small $[1,|w| - |\A| + 1]$} where $\A = \Alph(w)$.

\begin{theorem} \label{T:triangle}  Let $w$ be a finite word with $|\Alph(w)| \geq 2$ and suppose $v$ is the heart of~$w$. Then the graph of the factor complexity $C_w(n)$ of $w$ as a function of $n$ $(0 \leq n \leq |w|)$ is a triangle on the interval \linebreak $[1,|w|-|\Alph(w)| + 1]$ if and only if $K_v = R_v$.
\end{theorem}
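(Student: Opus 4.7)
The plan is to transfer the question from $w$ to its heart $v$ using Lemma~\ref{L:K!=1}, and then read off the ``top'' of the trapezoid directly from $R_v$ and $K_v$. A triangle on $[1,|w|-|\Alph(w)|+1]$ is just the degenerate GT shape in which the maximal constant interval $[m_w,M_w]$ of $C_w$ collapses to a single point, i.e., $m_w=M_w$. Since Lemma~\ref{L:K!=1} states that $w$ is a GT-word iff its heart $v$ is, and moreover $m_w=m_v$ and $M_w=M_v$ in that case, the theorem reduces immediately to proving, for a GT-word $v$, that $m_v=M_v$ if and only if $R_v=K_v$.

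Next, I would identify $[m_v,M_v]$ explicitly in terms of $R_v$ and $K_v$, borrowing the case analysis already carried out inside the proof of Theorem~\ref{T:characterization1-take2}. When $R_v \leq K_v$, Proposition~\ref{P:deLuca1} directly says $C_v$ is constant on $[R_v,K_v]$ and strictly monotone outside it, so $m_v=R_v$ and $M_v=K_v$. When $R_v > K_v$, Proposition~\ref{P:deLuca1} gives $M_v=R_v$, but $m_v=K_v$ has to be extracted from the right-special-factor argument in the proof of Theorem~\ref{T:characterization1-take2}: the possibility $m_v < K_v$ is excluded because every factor of length $m_v < K_v$ is right-extendable (contradicting constancy), and the possibility $m_v > K_v$ is ruled out by showing that $C_v(K_v+1)=C_v(K_v)+1$ would otherwise force two right-special factors of length $K_v$ or a right-special factor of right-valence $3$, contradicting that the associated extension by a fresh letter remains a GT-word. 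Hence in every case $[m_v,M_v]=[\min\{R_v,K_v\},\max\{R_v,K_v\}]$, so $m_v=M_v \iff R_v=K_v$, and combining with Lemma~\ref{L:K!=1} gives the theorem.

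The main obstacle is the $R_v > K_v$ case, since Proposition~\ref{P:deLuca1} alone only tells us $C_v$ is non-decreasing on $[K_v,R_v]$ and hence only yields $m_v \geq K_v$; pinning down $m_v=K_v$ requires the somewhat delicate right-special-factor-counting argument that appears in Theorem~\ref{T:characterization1-take2}. Everything else is bookkeeping: one checks the endpoints $(1,|\Alph(w)|)$ and $(|w|-|\Alph(w)|+1,|\Alph(w)|)$ of the trapezoid in Figure~\ref{F:trap-graph}, notes that they coincide with the endpoints of the ascending/descending legs of slope $\pm 1$, and confirms that the two legs meet at a common peak precisely when $m_w=M_w$.
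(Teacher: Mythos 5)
Your proof is correct and follows essentially the same route as the paper's: reduce to the heart via Lemma~\ref{L:K!=1}, then use Proposition~\ref{P:deLuca1} (supplemented, in the case $R_v>K_v$, by the right-special-factor argument from the proof of Theorem~\ref{T:characterization1-take2}) to identify the flat top of $C_v$ as $[\min\{R_v,K_v\},\max\{R_v,K_v\}]$, which degenerates to a point exactly when $R_v=K_v$. The paper's version is slightly terser---it pins down the peak $m$ directly by playing the triangle hypothesis off against the monotonicity pattern of Proposition~\ref{P:deLuca1}---but the content is the same.
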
 
\begin{proof} ($\Leftarrow$): If $K_v=R_v$, then the graph of the factor complexity function $C_v$ of $v$ is clearly a triangle on the interval $[1,|v|-|\Alph(v)|+1]$ by Proposition~\ref{P:deLuca1}. Hence, by Lemma~\ref{L:K!=1}, the graph of $C_w$ is a triangle on the interval $[1,|w|-|\Alph(w)|+1]$.

\noindent
($\Rightarrow$): Suppose the graph of $C_w$ is a triangle on the interval $[1,|w|-|\Alph(w)|+1]$. More precisely, suppose $C_w$ increases by 1 with each $n$ in the interval $[1,m]$ and decreases by 1 with each $n$ in the interval $[m,|w|-|\Alph(w)|+1]$ for some positive integer $m$.

Let us first consider the case $R_v \geq K_v$. By Proposition~\ref{P:deLuca1}, $C_v$ increases by 1 with each $n$ in the interval $[1,K_v]$, is non-decreasing on the interval $[K_v,R_v]$, and decreases by 1 with each $n$ in the interval $[R_v,|v|]$. But by Lemma~\ref{L:K!=1}, since the graph of $C_w$ is a triangle, so too is the graph of $C_v$; in particular, $C_v$ increases by 1 with each $n$ in the interval $[1,m]$ and decreases by 1 with each $n$ in the interval $[m,|w|-|\Alph(w)|+1]$ where $m$ is the same as above. Hence $m=K_v=R_v$. Similarly, when considering the case $R_v \leq K_v$, we deduce that $m=K_v=R_v$.
\end{proof}

We end this section by noting that the set of all GT-words is \textit{factorial} (i.e., closed by taking factors) and is also closed under reversal.

\begin{proposition} \label{P:factors-closed} %(\textit{cf.} \cite[Proposition 8]{fD02acom}) 
The set of all GT-words is closed by factors.
\end{proposition}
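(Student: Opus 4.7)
My plan is to reduce closure under factors to two simpler claims: closure under reversal and closure under removing the last letter. Reversal is immediate, since $u \mapsto \widetilde{u}$ is a length-preserving bijection between factors of $w$ and factors of $\widetilde{w}$, so $C_{\widetilde{w}}(n) = C_w(n)$ and $\Alph(\widetilde{w}) = \Alph(w)$. Combining this with closure under deleting the last letter yields closure under deleting the first letter as well, and any non-empty factor of $w$ is reached by iterating these two operations.

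Hence the task reduces to the following: if $w$ is a GT-word with $|w| \geq 2$, then the prefix $w' := w[1 \ldots |w|-1]$ is also a GT-word. The factors of $w$ that are lost upon deleting the last letter are exactly the suffixes of $w$ occurring only once, i.e., those of length at least $K_w$. This gives
\begin{align*}
C_{w'}(n) &= C_w(n), &&\text{for } 0 \leq n < K_w, \\
C_{w'}(n) &= C_w(n) - 1, &&\text{for } K_w \leq n \leq |w'|.
\end{align*}
I then split on whether $K_w = 1$ or $K_w \geq 2$.

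If $K_w = 1$, the last letter of $w$ occurs only once in $w$, so $|\Alph(w')| = |\Alph(w)| - 1$; the formula above shows that $C_{w'}$ is simply $C_w$ shifted down by one for every $n \geq 1$. This preserves the trapezoidal shape, with the same parameters $m$ and $M$, relative to the smaller alphabet, so $w'$ is a GT-word.

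The main obstacle is the case $K_w \geq 2$. Naively, subtracting one from $C_w$ at the single index $n = K_w$ could destroy the trapezoidal shape unless $K_w$ coincides with one of the boundaries $m$ or $M$ of the constant region of $C_w$. To force this, I would invoke Theorem~\ref{T:characterization1-take2} and Lemma~\ref{L:K!=1}: writing $v$ for the heart of $w$, these give $m = \min(R_v,K_v)$ and $M = \max(R_v,K_v)$, so in particular $K_v \in \{m,M\}$. Since $K_w \geq 2$ forces the last letter of $w$ to occur at least twice, the decomposition $w = rvs$ must have $s = \varepsilon$, so $w = rv$. Because $\Alph(r) \cap \Alph(v) = \emptyset$, every suffix of $w$ of length at most $|v|$ has the same set of occurrences in $w$ as in $v$, whence $K_w = K_v \in \{m,M\}$. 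A direct check in the three subcases $K_w = m < M$, $K_w = M > m$, and $K_w = m = M$ then confirms that $C_{w'}$ has the GT-shape with parameters $(m-1,M)$, $(m,M-1)$, or $(m-1,m)$ respectively, completing the argument.
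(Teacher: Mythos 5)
Your proof is correct, but it takes a genuinely different route from the paper's. The paper reduces to the heart $v$ of $w$ via Lemma~\ref{L:K!=1} and then argues by contradiction: if some factor of $v$ were not a GT-word, its heart would violate the condition of Theorem~\ref{T:characterization1-take2}, hence by Proposition~\ref{P:right-special-characterization} it would contain two distinct right special factors of the same length or one of right-valence greater than $2$; these would persist as right special factors of $v$ itself, contradicting (again via Proposition~\ref{P:right-special-characterization}) that $v$ is a GT-word. You instead reduce to a single elementary operation --- deleting the last letter --- combined with reversal, and track the complexity function explicitly through the identity $C_{w'}(n) = C_w(n)$ for $n < K_w$ and $C_{w'}(n) = C_w(n) - 1$ for $K_w \leq n \leq |w'|$, which is exactly the right bookkeeping. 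Your only nontrivial input from the paper is the same one it uses, namely that Theorem~\ref{T:characterization1-take2} together with Lemma~\ref{L:K!=1} pins the plateau of $C_w$ at $[\min\{R_v,K_v\}, \max\{R_v,K_v\}]$, so that $K_w = K_v$ lands on an endpoint of the plateau and the single decrement at $n = K_w$ cannot break the trapezoidal shape. Your approach is more quantitative (it produces the new parameters $(m-1,M)$, $(m,M-1)$, or $(m-1,m)$ of the truncated word, and in the $K_w=1$ case shows the whole graph simply drops by one with the alphabet shrinking accordingly), at the cost of an induction over letter deletions and a couple of boundary checks (e.g., $M \leq |w'|$, and the one-letter-alphabet case where Theorem~\ref{T:RK-characterization} does not directly apply but Remark~\ref{R:one-letter-words} covers it); the paper's argument handles an arbitrary factor in one step but leans harder on the special-factor machinery.
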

\begin{proof} Let $w$ be a GT-word. If $|\Alph(w)| = 1$, then clearly all of its factors are GT-words too (see Remark~\ref{R:one-letter-words}). So let us now assume that $|\Alph(w)| \geq 2$. By Lemma~\ref{L:K!=1}, it suffices to consider the heart $v$ of $w$. Suppose, by way of contradiction, that there exists a factor $u$ of $v$ (and hence $w$) that is not a GT-word. Then the heart $v'$ of $u$ is not a GT-word (by Lemma~\ref{L:K!=1} again), so $|v'| \ne R_{v'} + K_{v'} + |\Alph(v')| - 2$ by Theorem~\ref{T:characterization1-take2} . Hence, by Proposition~\ref{P:right-special-characterization}, $v'$ contains two different right special factors of the same length or a right special factor with right-valence greater than $2$. But then the same would be true for the heart $v$ of $w$, and therefore $|v| \ne R_v + K_v + |\Alph(v)| - 2$ by Proposition~\ref{P:right-special-characterization}, contradicting the fact that $v$ is a GT-word (by Theorem~\ref{T:characterization1-take2}).
\end{proof}

\begin{proposition} \label{P:reversal-closure} A finite word $w$ is a GT-word if and only if $\rev{w}$ is a GT-word.
\end{proposition}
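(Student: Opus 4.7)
The plan is to observe that the GT-word property, as given in Definition~\ref{D:trapezoidal}, depends only on the factor complexity function $C_w$, together with $|w|$ and $|\Alph(w)|$. Since $|w|=|\rev{w}|$ and $\Alph(w)=\Alph(\rev{w})$ are obvious, the statement reduces to showing that $C_w(n)=C_{\rev{w}}(n)$ for every $n\geq 0$.

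To establish this equality of complexity functions, I would use the map $u\mapsto \rev{u}$ on factors. First I would note that $u$ is a factor of $w$ if and only if $\rev{u}$ is a factor of $\rev{w}$, directly from the definition of reversal (if $w=xuy$ then $\rev{w}=\rev{y}\,\rev{u}\,\rev{x}$, and conversely). Since the map $u\mapsto\rev{u}$ is an involution on words of length $n$, it sets up a bijection between the factors of $w$ of length $n$ and the factors of $\rev{w}$ of length $n$. Hence $C_w(n)=C_{\rev{w}}(n)$ for all $n\geq 0$.

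With the equality of complexity functions in hand, the forward and backward implications are symmetric: if positive integers $m\leq M$ witness that $w$ is a GT-word according to Definition~\ref{D:trapezoidal}, then the same pair $(m,M)$ witnesses that $\rev{w}$ is a GT-word, and vice versa. The one-letter case (cf.\ Remark~\ref{R:one-letter-words}) is immediate since $\rev{w}=w$ when $|\Alph(w)|=1$, so nothing extra is needed there.

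There is no real obstacle here; the proof is essentially a one-line observation that reversal preserves the number of factors of each length. The only thing one must be careful about is not to conflate reversal-invariance of $C_w$ with reversal-invariance of the auxiliary parameters $R_w,K_w,L_w,H_w$ (which do get swapped: $R_{\rev{w}}=L_w$ and $K_{\rev{w}}=H_w$). Fortunately, Definition~\ref{D:trapezoidal} is phrased entirely in terms of $C_w$, so these parameter swaps play no role in the argument.
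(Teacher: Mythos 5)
Your proof is correct and is essentially the paper's own argument: the paper likewise observes that $\rev{w}$ has the same factor complexity as $w$ and concludes directly from Definition~\ref{D:trapezoidal}. You simply spell out the bijection $u \mapsto \rev{u}$ and the parameter caveat more explicitly, which is fine but adds nothing beyond the paper's one-line proof.
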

\begin{proof} The result follows immediately from the definition of a GT-word (Definition~\ref{D:trapezoidal}) by observing that, for any word $w$, its reversal $\rev w$ has the same complexity as $w$.
\end{proof}

\begin{example} The word $abbcc$ and its reversal are both GT-words with complexity sequence $\{C(n)\}_{n\geq0} = 1, 3, 4, 3, 2, 1$. We note, however, that $abbcc$ satisfies the RK-condition, but its reversal $ccbba$ does not because it has $K=1$, $R=2$, and $R + K + 1 \ne 5$. So the set of all words satisfying the RK-condition is not closed under reversal, whereas the set of all GT-words is closed under this involution.
\end{example}

\subsection{Palindromically-Rich GT-words}

In the case when $|\A| = 2$, the first author, together with de~Luca and Zamboni~\cite{aDaGlZ08rich}, have shown that the set of all palindromic (binary) trapezoidal words coincides with the set of all Sturmian palindromes. Moreover, if $w$ is a (binary) trapezoidal word, then $w$ contains exactly $|w|+1$ distinct palindromes (including the \textit{empty word} $\empt$). That is, (binary) trapezoidal words (and hence finite Sturmian words) are \textit{rich} in palindromes in the sense that they contain the maximum possible number of distinct palindromic factors since, as shown in \cite{xDjJgP01epis}, a finite word of length $n$ contains at most $n+1$ distinct palindromic factors (including $\empt$). More precisely, a finite word $w$ is said to be \textit{rich} if and only if it has $|w|+1$ distinct palindromic factors, and an infinite word is rich if all of its factors are rich~\cite{aGjJsWlZ09pali}. So, roughly speaking, a finite or infinite word is rich if and only if a new palindrome is introduced at each position in the word (see Proposition~\ref{P:xDjJgP01epis} below). 

The family of rich words was studied in depth by the first author and others in \cite{aGjJsWlZ09pali}. Such words were also independently considered in 2004 by Brlek, Hamel, Nivat, and Reutenauer~\cite{sBsHmNcR04onth} who called them \textit{full words}. Amongst various properties of rich words, we have the following characterization in terms of so-called \textit{complete returns}. Let $u$ be a non-empty factor of a finite or infinite word $w$. A factor of $w$ having exactly two occurrences of $u$, one as a prefix and one as a suffix, is called a \textit{complete return} to $u$ in $w$.

\begin{proposition} \cite{aGjJsWlZ09pali} \label{P:aGjJsWlZ09pali} A finite or infinite word $w$ is rich if and only if, for each non-empty palindromic factor $u$ of $w$, every complete return to $u$ in $w$ is a palindrome. 
\end{proposition}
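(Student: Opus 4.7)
The plan is to invoke the Droubay--Justin--Pirillo characterization of richness from \cite{xDjJgP01epis}: a finite word $w$ is rich if and only if, for every non-empty prefix $p$ of $w$, the longest palindromic suffix of $p$ is \emph{unioccurrent} in $p$ (i.e., occurs exactly once). For an infinite word, richness is equivalent to richness of every finite factor, so once the equivalence is established for finite words it transfers to the infinite case by applying it to the factors of $w$.

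For the $(\Leftarrow)$ direction I would argue by contradiction via DJP. Suppose there is a prefix $p$ of $w$ whose longest palindromic suffix $u$ occurs at least twice in $p$. Let $r$ be the factor of $p$ delimited by the two rightmost occurrences of $u$; by construction $r$ is a complete return to $u$, and $r$ is a suffix of $p$ since its final occurrence of $u$ is the suffix of $p$. By hypothesis $r$ is a palindrome, and clearly $|r|>|u|$ because $r$ contains two occurrences of $u$ at different starting positions. But then $r$ is a palindromic suffix of $p$ strictly longer than $u$, contradicting the maximality of $u$. Hence the longest palindromic suffix of every prefix of $w$ is unioccurrent, and $w$ is rich by DJP.

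For the $(\Rightarrow)$ direction, let $u$ be a non-empty palindromic factor of $w$ and let $r$ be a complete return to $u$ in $w$. Since $r$ is a factor of the rich word $w$, $r$ is itself rich, so by DJP the longest palindromic suffix $q$ of $r$ is unioccurrent in $r$. Now $u$ is a palindromic suffix of $r$, so $|q|\ge|u|$. Equality $q=u$ is impossible: in that case $u$ would be unioccurrent in $r$, contradicting that the complete return $r$ has two distinct positional occurrences of $u$. Thus $|q|>|u|$, and since $q$ is a palindrome ending in the palindrome $u$, reading $q$ in reverse shows that $u$ is also a prefix of $q$. Consequently $q$ contains two distinct positional occurrences of $u$, one starting at position $0$ of $q$ and one ending at position $|q|$. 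Viewed inside $r$ these must coincide with the only two occurrences of $u$ in $r$ (its prefix and its suffix). The suffix occurrences automatically match; the occurrence at the start of $q$ must then be the prefix of $r$, which forces $|q|=|r|$, i.e.\ $q=r$. Therefore $r$ is a palindrome.

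The step I expect to require the most care is the last paragraph when $|u|<|q|<2|u|$: in this regime the two occurrences of $u$ inside $q$ overlap as substrings, and one has to be explicit that they nevertheless count as two genuinely distinct positional occurrences, so that the complete-return hypothesis (exactly two positional occurrences of $u$ in $r$) applies and pins down the left endpoint of $q$ at the left endpoint of $r$. Once this book-keeping is made precise, the positional matching forces $q=r$ and the implication closes.
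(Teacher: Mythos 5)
The paper does not prove this proposition; it is imported verbatim from \cite{aGjJsWlZ09pali}, so there is no in-paper argument to compare against. Your derivation from the unioccurrent-palindromic-suffix characterization (Proposition~\ref{P:xDjJgP01epis}) is correct and is essentially the standard proof from the cited reference: in the $(\Leftarrow)$ direction the complete return spanned by the two rightmost occurrences of the longest palindromic suffix $u$ of a prefix $p$ is indeed a suffix of $p$, is a palindrome by hypothesis, and is strictly longer than $u$, giving the contradiction; in the $(\Rightarrow)$ direction the positional bookkeeping you flag (two distinct, possibly overlapping, occurrences of $u$ in $q$ forced to coincide with the only two occurrences of $u$ in the complete return $r$, hence $q=r$) is exactly right and does close the argument. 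The only ingredients you use beyond Proposition~\ref{P:xDjJgP01epis} are that factors of rich words are rich and that infinite richness reduces to richness of finite factors, both of which the paper itself takes as known, so the proof is self-contained at the level of rigour the paper operates at.
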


In short, $w$ is rich if and only if all complete returns to palindromes are palindromes in $w$. We also have the following characterization of rich words that will be useful in what follows. Note that a factor $u$ of a word $w$ is said to be \textit{unioccurrent} in $w$ if $u$ has exactly one occurrence in $w$.

\begin{proposition} \cite[Proposition~3]{xDjJgP01epis} \label{P:xDjJgP01epis} A word $w$ is rich if and only if all of its prefixes have a unioccurrent palindromic suffix (called a \emph{ups} for short). Similarly, $w$ is rich if and only if all of its suffixes have a unioccurrent palindromic prefix.
\end{proposition}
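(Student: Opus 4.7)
The plan is to base both equivalences on a single \emph{counting lemma}: for any finite word $u$ and any letter $a$, the set of distinct palindromic factors of $ua$ either coincides with that of $u$ or is obtained from it by adjoining exactly one element, and this new element (when it exists) is necessarily the longest palindromic suffix of $ua$. I would establish this lemma first, as it is the engine of everything that follows. The key observations are: any palindrome appearing in $ua$ but not in $u$ must use the final letter $a$, hence is a suffix of $ua$; and any palindromic suffix of $ua$ strictly shorter than the longest palindromic suffix $t$ is a suffix of $t$, and hence (by the palindromicity of $t$) also a prefix of $t$, which forces that shorter suffix to occur entirely inside $u$ and so not be new.

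Granted the lemma, the forward direction is a telescoping count over the prefix chain $p_0 = \empt, p_1, \ldots, p_{|w|} = w$. Writing $N_i$ for the number of distinct palindromic factors of $p_i$, richness of $w$ gives $N_{|w|} = |w|+1$, while $N_0 = 1$ and the lemma gives $N_i \leq N_{i-1}+1$; thus every increment must be exactly $1$. The new palindrome at step $i$ is then the longest palindromic suffix of $p_i$, and since it is absent from $p_{i-1}$ it can only occur in $p_i$ at the suffix position (any other occurrence would lie entirely in $p_{i-1}$, a contradiction); hence it is a unioccurrent palindromic suffix of $p_i$, as required.

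For the converse, if every non-empty prefix $p_i$ of $w$ admits a unioccurrent palindromic suffix $s_i$, then $s_i$ cannot be a factor of $p_{i-1}$, so $N_i \geq N_{i-1}+1$; induction yields $N_{|w|} \geq |w|+1$, which together with the upper bound from the lemma forces equality and hence richness of $w$. Finally, the dual characterization via unioccurrent palindromic \emph{prefixes} of suffixes is immediate from the first part applied to the reversal $\rev{w}$, since $w$ and $\rev{w}$ share the same palindromic factor set (so richness is preserved) while prefixes and suffixes interchange under reversal. The only non-routine step is the counting lemma itself; once it is in place, the two equivalences reduce to elementary bookkeeping.
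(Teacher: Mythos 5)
Your proof is correct. The paper does not actually prove this proposition---it is imported directly from Droubay, Justin and Pirillo \cite{xDjJgP01epis}---and your argument (the counting lemma that appending a letter adds at most one palindrome, namely the longest palindromic suffix; the telescoping count in both directions; and reversal for the dual statement) is essentially the standard proof from that reference, so there is no divergence from the paper to report.
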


\begin{note} The ups of a rich word $w$ is necessarily the \textit{longest palindromic suffix} of $w$.
\end{note}

Whilst it is true that all binary trapezoidal words are rich~\cite{aDaGlZ08rich}, the converse does not hold; for example, the binary word $aabbaa$ is rich, but not trapezoidal because its complexity jumps by 2 from $C(1)=2$ to $C(2)=4$. Moreover, unlike in the binary case, not all generalized trapezoidal words are rich. For instance, the word $aabca$ is a non-rich GT-word. However, it is easy to see, for instance,  that all GT-words $w$ with $|\Alph(w)|=3$ and $K_w=1$ are rich since any such word is simply a binary GT-word (which is rich) followed by a new letter (its ups). More generally, our next result describes all the rich GT-words.

\begin{theorem} \label{T:GT-rich} Let $w$ be a GT-word and suppose that its heart $v$ has longest palindromic prefix $p$ and longest palindromic suffix $q$. Then $w$ is rich if and only if one of the following conditions is satisfied.
\begin{itemize}
\item[(i)] $p$ and $q$ are \emph{unseparated} in $v$ (i.e., either $v=pq$,  $v=p=q$, or $p$ and $q$ overlap in $v$).
\item[(ii)] $v = pxq$ where $\Alph(p) \cap \Alph(q) \ne \emptyset$ and $x \in \Alph(p) \cup \Alph(q)$.
\item[(iii)] $v = puq$ where $\Alph(p) \cap \Alph(q) = \emptyset$ and $u=u_1Zu_2$ where $u_1$, $u_2$, $Z$ are words (at least one of which is non-empty) such that $\Alph(u_1) \subseteq \Alph(p)$, $\Alph(u_2) \subseteq \Alph(q)$, and $Z$ contains no letters in common with $p$ and $q$.
\end{itemize}
\end{theorem}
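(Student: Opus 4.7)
The plan has two broad phases: first reduce the problem to the heart $v$, then perform a case analysis on the structure of $v$ around its longest palindromic prefix $p$ and longest palindromic suffix $q$.

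\textbf{Reduction to the heart.} Write $w = rvs$ where $r$ and $s$ are (possibly empty) products of letters that are each unioccurrent in $w$. I would first show that $w$ is rich if and only if $v$ is rich, using Proposition~\ref{P:xDjJgP01epis}. The key observation is that $\Alph(r)$ and $\Alph(s)$ are disjoint from $\Alph(v)$, so any prefix of $w$ ending inside $r$ or in $s$ trivially has its last letter as a ups, and for a prefix $rv'$ (with $v'$ a non-empty prefix of $v$) the ups coincides with the ups of $v'$ in $v'$, since $\Alph(r)$ is disjoint from $\Alph(v)$. Hence we may assume $w = v$.

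\textbf{Sufficiency.} Assume one of (i)--(iii) holds and verify by Proposition~\ref{P:xDjJgP01epis} that every prefix of $v$ admits a ups. Every prefix of $p$ already has a ups because the palindrome $p$ is rich, so only prefixes extending strictly beyond $p$ need checking. For case (i), if $v=pq$ or $p,q$ overlap, I would analyze, position by position in the second half of $v$, the longest palindromic suffix and show it is unioccurrent, using the constraint that $v$ itself is a GT-word (so its complexity jumps are limited, hence repeated palindromic suffixes would inflate complexity past the GT bound). For case (ii), the single bridging letter $x \in \Alph(p)\cup\Alph(q)$ contributes a ups because the prior occurrences of $x$ already lie inside the palindromic prefix or suffix, and then the extension into $q$ step-by-step produces a palindromic suffix new at each step. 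For case (iii), the alphabet decomposition $u=u_1Zu_2$ handles prefixes cleanly: a new letter from $Z$ is its own ups; a letter from $u_1$ contributes a palindromic suffix extending into $p$, and symmetrically for $u_2$ into $q$.

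\textbf{Necessity.} Assume $v$ is rich. Since $v$ is its own heart, Theorem~\ref{T:characterization1-take2} gives $|v| = R_v + K_v + |\Alph(v)| - 2$, and Proposition~\ref{P:right-special-characterization} tells us there is exactly one right special factor of each length less than $R_v$, each with right-valence exactly $2$. I would now read the word left-to-right: after the prefix $p$, each new letter must introduce a ups. A letter that does not already appear in the word so far provides itself as ups, but then it must never reappear (it becomes part of the "new'' portion $Z$ or the $r, s$ boundary already stripped off). A letter previously seen can only provide a ups of the form $x\alpha x$ where $\alpha$ is a palindrome already occurring as a (palindromic) factor; combined with the GT complexity constraint (no room for two distinct long palindromic suffixes at the same position without violating the one-right-special-factor property), this pins down that the letters between $p$ and $q$ must either lie in $\Alph(p)$ (piling on the $p$-side palindromic structure), lie in $\Alph(q)$ (symmetrically on the right), or be genuinely new letters. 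Finally, the ordering constraint---that once we leave $\Alph(p)$ we cannot return to it without creating a second right special factor of that length---forces the block decomposition $u=u_1 Z u_2$ in case (iii), while a shared letter between $\Alph(p)$ and $\Alph(q)$ collapses the middle to a single letter in case (ii) because any longer middle would create a complete return to a palindrome containing that shared letter that fails to be a palindrome, contradicting Proposition~\ref{P:aGjJsWlZ09pali}.

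\textbf{Main obstacle.} The delicate step is the necessity direction, specifically showing that the shared-alphabet case (ii) forces the middle to be a \emph{single} letter, whereas the disjoint-alphabet case (iii) allows a longer $u$ with the prescribed block structure. My plan is to play Proposition~\ref{P:aGjJsWlZ09pali} (complete returns to palindromes are palindromes) against Proposition~\ref{P:right-special-characterization} (unique right special factor of each short length) to rule out all other configurations; the bookkeeping of which palindromic suffixes are available at each intermediate position between $p$ and $q$, and ensuring uniqueness of occurrence, is where most of the work lies.
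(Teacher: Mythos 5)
Your overall architecture (reduce to the heart via Proposition~\ref{P:xDjJgP01epis}, then case-analyse the configuration of $p$ and $q$ by playing Proposition~\ref{P:aGjJsWlZ09pali} against Proposition~\ref{P:right-special-characterization}) matches the paper's strategy, but there are two places where the proposal is missing the actual idea rather than just the details.

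First, the sufficiency of condition~(i) is the hardest part of the theorem and your sketch of it does not work as stated. You claim that if $p$ and $q$ are unseparated then a repeated longest palindromic suffix at some position ``would inflate complexity past the GT bound.'' There is no such direct implication: a prefix failing to have a unioccurrent palindromic suffix does not force an extra right special factor or a complexity jump, and indeed non-rich GT-words exist (e.g.\ $abacbab$), so any argument must genuinely use the unseparatedness of $p$ and $q$, not just the trapezoidal complexity profile. The paper's proof takes a minimal non-rich heart $v$ satisfying~(i), extracts a non-palindromic complete return $r=PUP$ to a palindrome $P$, and shows --- by cases on whether each occurrence of $P$ lies inside $p$ or inside $q$ --- that a strictly shorter GT-word heart still satisfying~(i) and still non-rich can be produced (namely a suffix $S=PUPq'$ whose longest palindromic prefix is $P$ and whose longest palindromic suffix is $q$, unseparated in $S$), contradicting minimality. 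Nothing in your plan produces this descent, and without it the forward implication of~(i) is unproven. Note also that your position-by-position verification would already fail for $v=p=q$ a long palindrome such as $abacaba$: richness of palindromic GT-hearts is itself a consequence of this minimality argument, not an input to it.

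Second, in the necessity direction you assert that a shared letter between $\Alph(p)$ and $\Alph(q)$ ``collapses the middle to a single letter'' because a longer middle would create a non-palindromic complete return. That is the right intuition but it is not a proof, and it is precisely where the paper has to work hardest: when one alphabet is contained in the other and $|u|\ge 2$ with $\Alph(u)\subseteq\Alph(p)\cup\Alph(q)$, the non-palindromic complete return is not always visible directly, and the paper instead runs an argument parallel to Proposition~\ref{P:binary-hearts}, showing $p$ and $q$ are unioccurrent, deducing $R_v\le H_v$ and $L_v\le K_v$, hence $R_v=L_v$, $K_v=H_v$, and then deriving the absurdity $L_{pu}=0$ from the RK-condition applied to the prefix $pu$. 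Separately, the case $\Alph(p)\cap\Alph(q)\ne\emptyset$ with neither alphabet contained in the other must be pinned down to the single rich family $p=(ax)^na$, $u=x$, $q=(bx)^mb$ (which is what makes condition~(ii) correct as stated); your sketch does not distinguish the comparable and incomparable subcases at all. These are not bookkeeping gaps but missing arguments, so the proposal as written does not establish the theorem.
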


\textbf{Examples} 
\begin{itemize}
\item \textbf{Rich GT-word:} ${\underline{abacaba}de}$ has complexity sequence $\{C(n)\}_{n\geq0} = 1, 5, 6, 6, 6, 5, 4, 3, 2, 1$ with heart $v = p = q = abacaba$ (a palindrome).

\item \textbf{Rich GT-word:} ${\underline{ababada}c}$ has complexity sequence $\{C(n)\}_{n\geq0} = 1, 4, 5, 5, 5, 4, 3, 2, 1$ with heart $v=ababada$, $p=ababa$, and $q=ada$ ($p$ and $q$ overlap). 

\item \textbf{Rich GT-word:} $aaabab$ has complexity sequence $\{C(n)\}_{n\geq 0} = 1, 2, 3, 4, 3, 2, 1$ with heart $v=pq$ where $p=aaa$ and $q=bab$ ($v$ is a product of $p$ and $q$).

\item \textbf{Rich GT-word:} $bacabacac$ has complexity sequence $\{C(n)\}_{n\geq 0} = 1, 3, 4, 5, 5, 5, 4, 3, 2, 1$ with heart $v=paq$ where $p=bacab$ and $q=cac$. Here $\Alph(q) \subset \Alph(p)$ and $p$, $q$ are separated by a letter in $\Alph(p)$.

\item \textbf{Rich GT-word:} $acacbcb$ has complexity sequence $\{C(n)\}_{n\geq 0} = 1, 3, 4, 5, 4, 3, 2, 1$ with heart $v=pcq$ where $p=aca$ and $q=bcb$. Here $|\Alph(p) \cup \Alph(q)| > |\Alph(p) \cap \Alph(q)| \geq 1$ and $p$, $q$ are separated by a letter in $\Alph(p) \cap \Alph(q)$.

\item \textbf{Rich GT-word:} $aabcc$ has complexity sequence $\{C(n)\}_{n\geq0} = 1, 3, 4, 3, 2, 1$ with heart $v=puq$ where $p=aa$, $u= b$, and $q=cc$. Here $\Alph(p) \cap \Alph(q) = \emptyset$ and $p$, $q$ are separated by a letter not in $\Alph(p) \cup \Alph(q)$.

\item \textbf{Rich GT-word:} $aaadcbcb$ has complexity sequence $\{C(n)\}_{n\geq 0} = 1, 4, 5, 6, 5, 4, 3, 2, 1$ with heart $v = puq$ where $p=aaa$, $u=dc$, and $q=bcb$. Here $\Alph(p) \cap \Alph(q) = \emptyset$ and $p$, $q$ are separated by the word $dc$ where the letter $d$ is not in $\Alph(p) \cup \Alph(q)$ and $c \in \Alph(q)$.

\item \textbf{Non-Rich GT-word:} ${\underline{ababadb}c}$ has complexity sequence $\{C(n)\}_{n\geq0} = 1, 4, 5, 5, 5, 4, 3, 2, 1$  
with heart $v=pdq$ where $p=ababa$ and $q=b$. Here $\Alph(q) \subset \Alph(p)$ and $p$, $q$ are separated by a letter not in $\Alph(p)$. Note that $v$ ends with a non-palindromic complete return to $b$.

\item \textbf{Non-Rich GT-word:} $abacbab$ has complexity sequence $\{C(n)\}_{n\geq0} = 1, 3, 4, 5, 4, 3, 2, 1$ with heart $v=pcq$ where $p=aba$ and $q=bab$. Here $\Alph(p) = \Alph(q)$ and $p$, $q$ are separated by a letter that they do not contain. Note that $v$ contains non-palindromic complete returns to each of the letters $a$ and $b$.

\item \textbf{Non-Rich GT-word:} $dcdbacdc$ has complexity sequence $\{C(n)\}_{n\geq 0} = 1, 4, 5, 6, 5, 4, 3, 2, 1$ with heart $v=puq$ where $p=dcd$, $u=ba$, and $q=cdc$. Here $\Alph(p) = \Alph(q)$ and $p$, $q$ are separated by a product of two distinct letters, neither of which they contain themselves. Note that $v$ contains non-palindromic complete returns to each of the letters $c$ and $d$.

\item \textbf{Non-Rich GT-word:} $aaaaaadebcad$ has complexity sequence \\ $\{C(n)\}_{n\geq 0} = 1, 5, 6, 7, 7, 7, 7, 6, 5, 4, 3, 2, 1$ with heart $v=puq$ where $p=aaaaaa$, $u=debca$, $q=d$. Here $\Alph(p) \cap \Alph(q) = \emptyset$ and $p$, $q$ are separated by a product of mutually distinct letters, with last letter in $\Alph(p)$ and first letter in $\Alph(q)$. Note that $v$ begins with a non-palindromic complete return to $a$ and also ends with a non-palindromic complete return to~$d$.

\item \textbf{Non-Rich GT-word:} $abcedabceded$ has complexity sequence \\ $\{C(n)\}_{n\geq 0} = 1, 5, 6, 7, 7, 7, 7, 6, 5, 4, 3, 2, 1$ with heart $v=puq$ where $p=a$, $u=bcedabce$, $q=ded$. Here $\Alph(p) \cap \Alph(q) = \emptyset$ and $p$, $q$ are separated by a word containing letters in $\Alph(p)$ and $\Alph(q)$, but beginning with a letter not in $\Alph(p)$. Note that $v$ begins with a non-palindromic complete return to the letter $a$ and also contains non-palindromic complete returns to each of the other letters $b$, $c$, $d$, and $e$.
\end{itemize}

The proof of Theorem~\ref{T:GT-rich} requires several lemmas (Lemmas~\ref{L:rich-heart}--\ref{L:rich-disjoint}, to follow), but first we show that, in the case of a binary alphabet, every trapezoidal word has unseparated $p$ and $q$. This result allows us to give an alternative proof of the fact that all binary trapezoidal words are rich, originally proved in \cite{aDaGlZ08rich} (see Corollary~\ref{C:binary-hearts} below).

\begin{proposition} \label{P:binary-hearts}
Suppose $w$ is a binary trapezoidal word. Then the longest palindromic prefix $p$ and longest palindromic suffix $q$ of $w$ are unseparated in $w$. 
\end{proposition}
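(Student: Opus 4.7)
The plan is to prove the equivalent inequality $|p| + |q| \ge |w|$: since $p$ is a prefix and $q$ a suffix of $w$, summing to at least $|w|$ forces them to overlap or meet, hence to be unseparated in the sense of Theorem~\ref{T:GT-rich}(i).

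First I dispose of the exceptional cases in Remark~\ref{R:binary-heart}. If $w = a^n$ then $p = q = w$ trivially; if $w = a^n b$, $ab^n$, or a reversal, maximality of $p$ and $q$ gives $p$ equal to the leading letter-power and $q$ equal to the singleton trailing letter (or the symmetric variant), so $w = pq$; if $w$ is itself a palindrome, then $p = q = w$. Hereafter I may assume that $w$ equals its own heart (so each of $a,b$ occurs at least twice in $w$) and that $w$ is not a palindrome.

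For this main case, I exploit that a binary trapezoidal word satisfies $|w| = R_w + K_w$ (Theorem~\ref{T:RK-characterization}) and, since $\tilde{w}$ is also binary trapezoidal by Proposition~\ref{P:reversal-closure}, $|w| = L_w + H_w$ as well. Combined with de~Luca's identity $\max\{R_w,K_w\} = \max\{L_w,H_w\}$ (Corollary~4.1 of~\cite{aD99onth}), this yields $\{R_w,K_w\} = \{L_w,H_w\}$ as multisets. The crux of the proof is the pair of inequalities
\[
|p| \ge H_w \quad\text{and}\quad |q| \ge K_w,
\]
which sum to $|p|+|q| \ge H_w + K_w$. A short case analysis on the multiset equality $\{R_w,K_w\} = \{L_w,H_w\}$ then produces $H_w + K_w \ge |w|$ in every subcase except $R_w = L_w > K_w = H_w$; in that residual subcase I would use Proposition~\ref{P:deLuca4.5} (which forces the longest right-special factor to be bispecial whenever $R_w > H_w$) together with a counting argument on the two permissible bispecial extension contexts of the longest bispecial factor against the constraint $|w| = R_w + K_w$, to force $w$ itself to be a palindrome, thereby reducing to the already-handled case.

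The main obstacle is establishing $|p| \ge H_w$ for binary trapezoidal words without invoking their richness (since Corollary~\ref{C:binary-hearts} will derive richness from the present proposition, so such an argument would be circular). My plan for this step is to use the unique chain of left-special factors $\varepsilon = v_0 \subsetneq v_1 \subsetneq \cdots \subsetneq v_{L_w-1}$ supplied by Proposition~\ref{P:left-special-characterization}, combined with the dichotomy of Proposition~\ref{P:deLuca4.5} that the longest left-special factor $v_{L_w-1}$ is either a suffix of $w$ or bispecial. The binary valence-two constraint then forces the shortest unrepeated prefix of $w$ (of length $H_w$) to lie inside a palindromic prefix of $w$, via an inductive palindromic-closure argument along the left-special chain. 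The dual inequality $|q| \ge K_w$ follows by applying the same argument to $\tilde{w}$.
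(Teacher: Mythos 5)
Your high-level skeleton is sound and runs parallel to the paper's: both arguments hinge on the identities $|w|=R_w+K_w=L_w+H_w$, de~Luca's $\max\{R_w,K_w\}=\max\{L_w,H_w\}$, and the pair of inequalities $|p|\ge H_w$, $|q|\ge K_w$ (the paper states them as $H_w\le|p|$, $K_w\le|q|$). Your reduction of the exceptional forms and your multiset case analysis are both correct as far as they go. But the proposal has two genuine gaps, and they sit exactly where all the difficulty lives. First, the inequality $|p|\ge H_w$ is never actually proved. You correctly recognize that deducing it from richness would be circular, but the replacement you offer --- ``an inductive palindromic-closure argument along the left-special chain'' --- is only a gesture, and it is not clear it can work: the left special factors of a binary trapezoidal word need not be palindromes (for $w=aababa$ the chain is $\varepsilon, a, ab, aba$), and a repeated prefix of length $H_w-1$ need not even be left special, so the chain of Proposition~\ref{P:left-special-characterization} does not obviously control palindromic prefixes. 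The paper gets $H_w\le|p|$ by an entirely different mechanism: it sets up a \emph{minimal} counterexample $w=puq$ and shows $p$ and $q$ are unioccurrent, because a shorter complete return to $p$ would itself be a trapezoidal word (Proposition~\ref{P:factors-closed}) with separated longest palindromic prefix and suffix, contradicting minimality. Without some such global induction, your key inequality is unsupported.

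Second, the residual subcase $R_w=L_w>K_w=H_w$ is dispatched with another sketch (``a counting argument on the two permissible bispecial extension contexts \dots to force $w$ itself to be a palindrome''). The claim may well be true --- small examples such as $baaab$ are consistent with it --- but no argument is given, and establishing it would require an analysis of the unique maximal bispecial factor comparable in length to the page-long bispecial/complete-return analysis the paper performs (where, notably, the separated case is shown to force $R_w=L_w<K_w=H_w$, the \emph{opposite} inequality, and the contradiction comes from $|pu|\le R_w$ forcing $p$ to occur twice). As written, the proposal is a plausible outline with its two load-bearing steps missing.
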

\begin{proof} Let $\Alph(w) = \{a,b\}$. We first observe that if $w$ takes the form $x^ny$ or $xy^n$ where $\{x,y\} = \{a,b\}$ and $n \in \mathbb{N}^+$, then $w$ is simply a product of its longest palindromic prefix and longest palindromic suffix. 

Let us now suppose, on the contrary, that $w$ is a binary trapezoidal word (not of the form $x^ny$ or $xy^n$) of minimal length such that its longest palindromic prefix $p$ and longest palindromic suffix $q$ are separated, i.e., such that $w=puq$ where $u$ is a non-empty word. Clearly, $w$ cannot be a palindrome; otherwise $w$ would itself be its longest palindromic prefix and suffix (i.e., $w=p=q$).

We first show that $p\ne q$. Suppose not, i.e., suppose $w=pup$. Then $u$ must be a non-palindromic word because $w$ is not a palindrome. If $p$ has an occurrence in $w$ that is neither a prefix nor a suffix of $w$, then $w$ would begin with a complete return to $p$ (shorter than $w$), which cannot be a palindrome because otherwise $w$ would begin with a palindrome longer than $p$ (its longest palindromic prefix). So $w$ would begin with a non-palindromic complete return to $p$ (shorter than $w$) of the form $pUp$ where $U$ is a non-palindromic word. Let $s=pUp$. Since any factor of $w$ is a (binary) trapezoidal word (by Proposition~\ref{P:factors-closed} or \cite[Proposition~8]{fD02acom}), the proper prefix $s=pUp$ of $w$ is a binary trapezoidal word. But this contradicts the minimality of $w$ as a binary trapezoidal word (of shortest length) having separated longest palindromic prefix and suffix. Therefore $p$ occurs only twice in $w$, as a prefix and as a suffix. That is, $w=pup$ is itself a non-palindromic complete return to $p$. If $p$ were a letter, say $p=a$, then $w=aua$ where $\Alph(u)=\{b\}$ (since $w$ is a complete return to $p$), which implies that $u=b^n$ for some integer $n\geq 1$. However, this is not possible because $u$ is not a palindrome. So $|p| \geq 2$. If $p=aa$, then $w=aauaa$ where $u$ begins and ends with the letter $b$. In this case, the letter $a$ is bispecial, so $b$ cannot be left or right special by Propositions~\ref{P:right-special-characterization} and \ref{P:left-special-characterization}. In particular, $bb$ cannot a factor of $u$ nor can $aa$ because $w$ is a complete return to $aa$. Hence, we deduce that $u= (ba)^nb$ for some integer $n\geq 1$; a contradiction (since $u$ is not a palindrome). Thus $p \ne aa$, and likewise it can be shown that $p \ne bb$. Hence $|p| \geq 3$. Let us write $p=xPx$ where $x \in \{a,b\}$ and $P$ is a palindrome. Then we have $w=xPxuxPx$. Let $w'$ be the prefix of $w$ obtained from $w$ by removing its last letter $x$, i.e., $w'=wx^{-1} = xPxuxP=puxP$ and let $S=PxuxP$ so that $w'=xS$ and $w=w'x=xSx$. Clearly $p=xPx$ is the longest palindromic prefix of $w'$. Let $q'$ denote the longest palindromic suffix of $w'$. By the minimality of $w$ as binary trapezoidal word (of shortest length) having separated longest palindromic prefix and suffix, $p$ and $q'$ cannot be separated in $w'$. Hence $|uxP| \leq |q'| < |w'|$. If $|q'|=|uxP|$, then $w'$ (and hence $w$) begins with the palindrome $xPxPx$ longer than $p$. Therefore $|uxP| < |q'| < |w'|$ and the prefix $Px$ of $q'$ overlaps with a suffix of the prefix $xPx$ of $w'$. Hence $w'$ either begins with the palindrome $xPPx$ (if $|q'| = |uxP| + 1$) or begins with a palindrome $Q$ having $xP$ as a prefix (and $Px$ as a suffix) with $|xPx| < |Q| < |2P| + 2$ (since $P$, being a palindrome, overlaps with itself in a palindrome). Neither case is possible because $p=xPx$ is the longest palindromic prefix of $w'$. Thus $p \ne q$.

We now claim that $p$ and $q$ are both unioccurrent in $w$. Indeed, if $p$ occurs more than once in $w$, then $w$ would begin with a complete return to $p$ (shorter than $w$), which cannot be a palindrome because otherwise $w$ would begin with a palindrome longer than $p$ (its longest palindromic prefix). So $w$ begins with a non-palindromic complete return to $p$ (shorter than $w$) of the form $pUp$ with $p$ being its longest palindromic prefix and suffix. But since $pUp$ is trapezoidal (by Proposition~\ref{P:factors-closed} or \cite[Proposition~8]{fD02acom}), this contradicts the minimality of $w$ as a binary trapezoidal word (of shortest length) having separated longest palindromic prefix and suffix. Likewise, we deduce that $q$ occurs only once (as a suffix) in $w$. So $H_w \leq |p|$ and $K_w \leq |q|$. Moreover, since $w$ is a binary trapezoidal word, we have $|w| = R_w + K_w$ and $|w| = L_w + H_w$.

If $R_w > H_w$, then by Proposition~\ref{P:deLuca4.5}, the right special factor $r$ of $w$ of maximal length $|r| = R_w - 1$ is bispecial, so $r$ occurs at least twice in $w$. In particular, $w$ contains either both $ara$ and $brb$ or both $arb$ and $bra$. Let $P$ be the shortest prefix of $w$ that contains both $arx$ and $bry$ where $\{x,y\} = \{a,b\}$. Without loss of generality, we may assume that $P$ ends with a factor $S$ of the form $S= arxS' = S''bry$. In particular, $S=aEy$ where $E$ is a complete return to $r$. Let $p'$ (resp.~$q'$) denote the longest palindromic prefix (resp.~suffix) of $P=P'S=P'aEy$. If $|q'| > |S| = |aEy|$, then $P$ would have a proper prefix containing $y\rev{E}a$, of which $y\rev{r}b$ and $x\rev{r}a$ are factors. So $\rev{r}$ would be a bispecial factor of $P$ (and hence of $w$) of length $|r|$, and therefore $\rev{r} = r$ by Propositions~\ref{P:right-special-characterization} and \ref{P:left-special-characterization}. Moreover, we deduce that $E$ must be palindromic complete return to (the palindrome) $r$; otherwise, reasoning as above, we would reach a contradiction. It follows that $y=a$ and $x=b$, and so we see that $w$ has a prefix shorter than $P$ that contains both $arx=arb$ and $bry=bra$, contradicting the minimality of $P$. Thus $|q'| \leq |aEy|$. If $|ry| < |q'| < |aEy|$, then $E$ contains $y\rev{r}$, which implies $\rev{r} \ne r$ (since $E$ is a complete return to $r$). Furthermore, $E$ cannot contain $x\rev{r}$; otherwise $\rev{r}$ would be a left special factor of length $|r|$ different from $r$, which is not possible by Proposition~\ref{P:left-special-characterization}. So $E$ contains a factor $X$ of the form $X=rxX'=X''y\rev{r}$ that contains only one occurrence of $r$ (as a prefix) and only one occurrence of $\rev{r}$ (as a suffix). Such a factor cannot be a palindrome (since $x\ne y$) and it follows that the longest palindromic prefix and longest palindromic suffix of $X$ each have length less than $|r|$ and therefore they must be the same palindrome, $Q$ say. But then $X$ would be a non-palindromic complete return to $Q$, contradicting the minimality of $w$. So either $|q'| \leq |ry|$ or $|q'| = |S| = |aEy|$. In the former case, one can use similar reasoning as above to deduce that $p'$ and $q'$ are separated in $P$, again contradicting the minimality of $w$. In the latter case (when $|q'| = |S| = |aEy|$), $S=aEy$ is a palindrome, which implies that $y=a$ (and hence $x=b$) and that $\rev{r} = r$ (i.e., $r$ is a palindrome). So $w=P'aEaQ'$ where $E$ is a palindromic complete return to $r$ beginning with $rb$ and ending with $br$. Reasoning as above, we deduce that $|p| < |P'ar|$ and $|q| < |raQ'|$ (for if not, we reach a contradiction to either the minimality of $P$ or the fact that $E$ is a complete return to $r$). Let $w'$ denote the prefix of $w$ obtained from $w$ by removing the last letter of $w$, i.e., let $w' = wz^{-1} = P'aEaQ'z^{-1}$ where $z$ is the last letter of $w$. Then, since $|p| < |P'ar|$, we see that $p$ (the longest palindromic prefix of $w$) is also the longest palindromic prefix of $w'$, and using similar arguments as above, we deduce that the longest palindromic suffix $q''$ of $w'$ has length $|q''| < |raQ'| - 1$. Hence, just as $p$ and $q$ are separated in $w$, we see that the longest palindromic prefix and longest palindromic suffix of $w'$ (namely $p$ and $q''$) are separated in $w'$; a contradiction (yet again) to the minimality of $w$ as a binary trapezoidal word (of shortest length) having separated longest palindromic prefix and suffix. Thus $R_w \leq H_w$, and similarly we deduce that $L_w \leq K_w$. 

If $R_w = H_w$, then $|w| = |p| + |u| + |q| \geq R_w + |u| + K_w > R_w + K_w$ (because $R_w = H_w \leq |p|$, $K_w \leq |q|$, and $|u| \geq 1$); a contradiction. So $R_w < H_w$, and using similar reasoning, we deduce that $L_w < K_w$ too. Hence it follows that $R_w = L_w$ and $K_w = H_w$ since $\max\{R_w, K_w\} = \max\{L_w, H_w\}$ and $\min\{R_w,K_w\} = \min\{L_w, H_w\}$ by \cite[Corollary 4.1]{aD99onth} and Proposition~\ref{P:RKLH}. This implies that $|pu| \leq R_w$ since $|pu| = |w| - |q|$ where $|w| = R_w + K_w$ and $K_w \leq |q|$. However, by Proposition~\ref{P:deLuca4.5}, the right special factor $r$ of $w$ of maximal length $|r| = R_w - 1$ is a prefix of $w$, which occurs at least twice in $w$, and therefore $p$ (which is a prefix of $r$ since $|pu| \leq R_w = |r| + 1$) occurs more than once in $w$; a contradiction. Thus $p$ and $q$ are unseparated in $w$.
\end{proof}

%As an easy consequence of Proposition~\ref{P:binary-hearts}, we can show that all binary trapezoidal words are rich.

\begin{corollary} \label{C:binary-hearts}
All binary trapezoidal words are rich.
\end{corollary}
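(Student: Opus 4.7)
The plan is to proceed by strong induction on $|w|$, with Proposition~\ref{P:binary-hearts} doing the essential work in the final subcase. The base cases ($|w| \le 1$) are immediate. For the inductive step, suppose every binary trapezoidal word of length less than $|w|$ is rich. By Proposition~\ref{P:factors-closed}, every proper prefix of $w$ is a binary trapezoidal word and is therefore rich by the inductive hypothesis. Applying Proposition~\ref{P:xDjJgP01epis}, it then suffices to show that $w$ itself admits a unioccurrent palindromic suffix; equivalently, that the longest palindromic suffix $q$ of $w$ occurs only once in $w$.

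Suppose for contradiction that $q$ occurs at least twice in $w$. Taking the last two occurrences produces a suffix of $w$ of the form $qYq$ which is a complete return to $q$. This factor cannot be a palindrome, for otherwise it would be a palindromic suffix of $w$ strictly longer than $q$, contradicting the maximality of $q$. If $qYq \neq w$, then $qYq$ is a proper factor of $w$; by Proposition~\ref{P:factors-closed} it is binary trapezoidal of length less than $|w|$, hence rich by the inductive hypothesis. But Proposition~\ref{P:aGjJsWlZ09pali} rules out a rich word having a non-palindromic complete return to any of its palindromic factors, so this is a contradiction.

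The remaining case $qYq = w$ is where Proposition~\ref{P:binary-hearts} comes in: since $q$ is now both a prefix and a suffix of $w$, the longest palindromic prefix $p$ and longest palindromic suffix $q$ of $w$ are unseparated in $w$. One first checks that $p = q$ is impossible, since in that case ``unseparated'' forces $w$ to equal $pp$ or to have its prefix-$q$ and suffix-$q$ overlapping, and either conclusion would force $w$ to be a palindrome (contrary to $qYq$ being non-palindromic). Thus $|p| > |q|$ and $q$ is a proper prefix of $p$. Since $p$ is a palindrome, $q$ is then also a suffix of $p$, producing an occurrence of $q$ in $w$ starting at position $|p|-|q|+1$. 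This position is distinct from $1$ (the prefix-$q$) and from $|w|-|q|+1$ (the suffix-$q$), using that $|q| < |p| < |w|$ (the strict inequality $|p| < |w|$ again follows because $w$ is not a palindrome). Hence $q$ has at least three occurrences in $w$, contradicting the complete return property of $w = qYq$.

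The main obstacle is precisely this final subcase $qYq = w$, where Proposition~\ref{P:binary-hearts} is the key tool for manufacturing the forbidden third occurrence of $q$ from the palindromic structure of $p$; the rest of the induction is fairly routine and essentially packages known facts about rich words.
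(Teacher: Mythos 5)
Your proof is correct and rests on the same two pillars as the paper's -- closure under factors (Proposition~\ref{P:factors-closed}) and the unseparatedness of the longest palindromic prefix and suffix (Proposition~\ref{P:binary-hearts}) -- but it is organized quite differently. The paper argues directly: if $w$ is not rich, extract a non-palindromic complete return $r=PUP$ to a palindrome $P$, observe that $P$ is both the longest palindromic prefix and the longest palindromic suffix of $r$ (any longer palindromic prefix would manufacture a forbidden third occurrence of $P$ in $r$ -- exactly the trick you use in your subcase $|p|>|q|$), and then apply Proposition~\ref{P:binary-hearts} to the trapezoidal factor $r$. Your induction together with the ups characterization (Proposition~\ref{P:xDjJgP01epis}) serves to reduce everything to the case where the offending complete return is $w$ itself; there, your subcase $|p|>|q|$ is the contrapositive of the paper's observation that the longest palindromic prefix of a complete return to $q$ must equal $q$, and only your subcase $p=q$ actually invokes Proposition~\ref{P:binary-hearts}. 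So the proofs are morally the same, with yours carrying extra (but harmless) scaffolding; what the paper's version buys is brevity, and what yours buys is that Proposition~\ref{P:binary-hearts} is needed only for the single word $w$ rather than for an arbitrary factor.

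One point should be made explicit. You write the complete return spanning the last two occurrences of $q$ as $qYq$, and in the subcase $p=q$ you assert that ``unseparated'' with overlapping occurrences forces $w$ to be a palindrome. Both steps rely on the elementary but unstated fact that if two occurrences of a palindrome $q$ overlap or are adjacent, then the factor they span is itself a palindrome (the span has period $d$ equal to the offset, and a short index computation using the palindromicity of $q$ shows the span is a palindrome). Without this fact the overlap branch of ``unseparated'' is not actually excluded, so you should state and prove it; note that the paper's own proof makes the same implicit assumption when it writes the non-palindromic complete return in the form $pup$.
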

\begin{proof} If $w$ is a binary trapezoidal word, but not rich, then $w$ contains a non-palindromic complete return to a palindrome $p$ of the form $pup$ where $u$ is a non-palindromic word. Let $r=pup$. Clearly $p$ is both the longest palindromic prefix and the longest palindromic suffix of $r$ and these occurrences of $p$ in $r$ are separated. But this yields a contradiction because $r$, being a factor of $w$,  must be a binary trapezoidal word (by Proposition~\ref{P:factors-closed} or \cite[Proposition~8]{fD02acom}), and therefore its longest palindromic prefix and suffix cannot be separated (by Proposition~\ref{P:binary-hearts}). 
\end{proof}

\begin{lemma} \label{L:rich-heart}
A GT-word $w$ is rich if and only if the heart of $w$ is rich.
\end{lemma}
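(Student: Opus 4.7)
The plan is to prove both directions by exploiting the decomposition $w=rvs$ from Definition~\ref{D:heart}, where $r$ (respectively $s$) is the maximal prefix (respectively suffix) of $w$ whose letters each occur exactly once in $w$. The degenerate case when $v=w$ (which occurs when $|w|=|\Alph(w)|$, so that $w$ is a product of pairwise distinct letters) is trivial: every prefix of such a word ends with a newly-introduced letter that serves as a unioccurrent palindromic suffix, so $w$ is rich and the equivalence holds vacuously. I therefore assume $|w|>|\Alph(w)|$, in which case $w=rvs$ with $\Alph(r)$, $\Alph(v)$, $\Alph(s)$ pairwise disjoint -- this is immediate from the definition of the heart, since any letter of $r$ or $s$ appears exactly once in $w$ and hence cannot also appear in $v$ or in the other outer part.

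For the forward direction, I would apply Proposition~\ref{P:aGjJsWlZ09pali}. Given any non-empty palindromic factor $u$ of $v$, every complete return to $u$ in $v$ is also a complete return to $u$ in $w$ (occurrences of $u$ in $v$ are occurrences in $w$, and there are no ``extra'' occurrences of $u$ inside $v$ that disappear when passing to $w$). Since $w$ is rich, such a complete return must be a palindrome, so $v$ satisfies the complete-return characterization and is rich. This is really the standard observation that richness is inherited by factors; the GT-word hypothesis plays no role here.

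For the backward direction, I would use Proposition~\ref{P:xDjJgP01epis} and exhibit a unioccurrent palindromic suffix (ups) for every non-empty prefix of $w$. The cases split according to where the prefix ends. If the prefix lies within $r$, or if it has the form $rvs'$ for some non-empty prefix $s'$ of $s$, then its last letter occurs only once in $w$ and serves as a ups of length one. The key case is a prefix of the form $rv'$ with $v'$ a non-empty prefix of $v$: richness of $v$ supplies a ups $u$ of $v'$, and I must show that $u$ is still unioccurrent in $rv'$. This is where the alphabet-disjointness is used: since $\Alph(u)\subseteq\Alph(v)$ is disjoint from $\Alph(r)$, any occurrence of $u$ inside $rv'$ must consist entirely of letters from $\Alph(v)$ and therefore must lie entirely within $v'$, where $u$ has a unique occurrence by choice.

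The only subtle step -- the ``hard part,'' if there is one -- is the last argument above: ruling out new occurrences of the ups of $v'$ that might have been created by prepending $r$. Once the alphabet disjointness is observed, the rest is immediate from the two characterizations of richness (Propositions~\ref{P:aGjJsWlZ09pali} and \ref{P:xDjJgP01epis}).
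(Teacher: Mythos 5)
Your proposal is correct and follows essentially the same route as the paper: richness of the heart follows from the heredity of richness under taking factors, and the converse is proved by exhibiting a unioccurrent palindromic suffix for every prefix of $w=rvs$ using Proposition~\ref{P:xDjJgP01epis} together with the pairwise disjointness of $\Alph(r)$, $\Alph(v)$, $\Alph(s)$. Your treatment of the key case (a prefix of the form $rv'$) just makes explicit the disjointness argument that the paper states more tersely.
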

\begin{proof} Let $w$ be a GT-word. If $w$ is rich, then clearly the heart of $w$ is rich too since any factor of a rich word is rich.

Conversely, suppose that the heart $v$ of $w$ is rich. If $v=w$, we are done. So suppose that $v \ne w$. Then $|w| > |\Alph(w)| \geq 2$ and we have $w=rvs$ where $r$ and $s$ are maximal such that $|w|_x=1$ for all $x \in \Alph(r) \cup \Alph(s)$. Moreover, since $v \ne w$, at least one of the factors $r$ and $s$ is non-empty. If $r$ is non-empty, then $|r| = |\Alph(r)|$ and it is easy to see that each prefix of $r$ has a unioccurrent palindromic suffix -- just the last letter of each prefix. Likewise, if $s$ is non-empty, each prefix of $s$ has a ups. Moreover, since $v$ is rich, each prefix of $v$ has a ups, by Proposition~\ref{P:xDjJgP01epis}. Thus, since the alphabets $\Alph(v)$, $\Alph(r)$, and $\Alph(s)$ are mutually disjoint sets, we see that each prefix of $w=rvs$ has a ups, and thus $w$ is rich, by Proposition~\ref{P:xDjJgP01epis} again.
\end{proof}

\begin{lemma} \label{L:GT-rich} Let $w$ be a GT-word and suppose that its heart $v$ has longest palindromic prefix $p$ and longest palindromic suffix $q$. If $v = puq$ for some non-empty word $u$, then one of the following conditions holds:
\begin{itemize}
\item[(i)] $p=(ax)^na$, $u=x$, $q=(bx)^mb$ for some $n, m \in \mathbb{N}^+$ where $a$, $b$, and $x$ are mutually distinct letters; 
\item[(ii)] $\Alph(p) \subseteq \Alph(q)$ or $\Alph(q) \subseteq \Alph(p)$;
\item[(iii)] $\Alph(p) \cap \Alph(q) = \emptyset$.
\end{itemize}
Moreover, if condition \emph{(i)} is satisfied, then $w$ is rich.
\end{lemma}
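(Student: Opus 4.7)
The plan is first to prove the trichotomy, then to verify the richness statement when (i) holds. For the trichotomy I will assume that both (ii) and (iii) fail, so that there exist mutually distinct letters $a\in\Alph(p)\setminus\Alph(q)$, $b\in\Alph(q)\setminus\Alph(p)$, and $x\in\Alph(p)\cap\Alph(q)$, and aim to prove $p=(ax)^na$, $u=x$, $q=(bx)^mb$ for some $n,m\geq 1$. The main tool is that since $v$ is the heart of the GT-word $w$, Lemma~\ref{L:K!=1} gives that $v$ is itself a GT-word, and combining Theorem~\ref{T:characterization1-take2} with the identity $|w|-|v|=|\Alph(w)|-|\Alph(v)|$ yields $|v|=R_v+K_v+|\Alph(v)|-2$. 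Hence Proposition~\ref{P:right-special-characterization} and its dual Proposition~\ref{P:left-special-characterization} apply to $v$: at most one letter of $\Alph(v)$ is right-special (with right-valence exactly $2$), and likewise on the left.

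The first structural observation is that $x$ is the unique right-special (and left-special) letter of $v$: since $x$ sits inside the palindrome $p$ it is followed there by a letter of $\Alph(p)$, while inside $q$ it is followed by a letter of $\Alph(q)$, and as $a\ne b$ these are distinct, so $x$ has right-valence $\geq 2$. Consequently every letter $y\in\Alph(v)\setminus\{x\}$ has a uniquely determined successor $\mu(y)$ (if $y$ is not the last letter of $v$) and a unique predecessor $\lambda(y)$ (if $y$ is not the first letter). Since $a\notin\Alph(q)$, $a$ is not the last letter of $v$, and the palindromicity of $p$ together with the impossibility $\mu(a)=a$ shows $\mu(a)\in\Alph(p)\setminus\{a\}$. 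Ruling out the factors $aa$ and $xx$ from $v$ (each would either produce a second right-special letter or force $x$'s right-valence above $2$), a short alternation argument then shows $\Alph(p)=\{a,x\}$ and that $p\in\{(ax)^na,\,(xa)^nx\}$.

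To rule out $p=(xa)^nx$, I would examine each possible first letter of the non-empty word $u$: $u$ starting with $x$ gives the forbidden factor $xx$; $u$ starting with $a$ either gives $aa$, or continues with $x$ to make $(xa)^{n+1}x$ a palindromic prefix of $v$ strictly longer than $p$, contradicting the maximality of $p$; and $u$ starting with $b$ leads, after inspecting the next letter, to the same kind of contradiction, ultimately forcing $u=\varepsilon$ which is excluded by hypothesis. Symmetrically $q=(bx)^mb$. Finally, $\mu(a)=x$ forces $u$ to begin with $x$ and $\lambda(b)=x$ forces $u$ to end with $x$; if $|u|\geq 2$, inspecting the second letter of $u$ shows that either the longest palindromic prefix would extend (e.g.\ $u$ beginning $xa$ makes $(ax)^{n+1}a$ a palindromic prefix longer than $p$), the longest palindromic suffix would extend (by symmetry), or a second right-special letter would appear --- all impossible. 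Hence $u=x$, establishing (i).

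For the second part, suppose (i) holds so that $v=(ax)^{n+1}(bx)^mb$. By Lemma~\ref{L:rich-heart}, $w$ is rich iff $v$ is rich, and by Proposition~\ref{P:xDjJgP01epis} this reduces to showing that every prefix of $v$ has a unioccurrent palindromic suffix. Walking along $v$, for prefixes contained in $(ax)^{n+1}$ the longest palindromic suffix is of the form $(ax)^ka$ or $(xa)^kx$ and is unioccurrent because it coincides with the longest palindromic prefix of that length; once $b$ first appears (at position $2n+3$) the ups becomes $b$ itself, and for each subsequent prefix of the form $(ax)^{n+1}(bx)^jb$ or $(ax)^{n+1}(bx)^j$ the ups is a palindrome built alternately from $b$ and $x$ (namely $(bx)^jb$ or $x(bx)^j$), each unioccurrent because any earlier occurrence in $v$ would require more $b$'s than the prefix contains to its left. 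This confirms richness. The main obstacle will be the case analysis in the first part, particularly excluding $p=(xa)^nx$ and enforcing $|u|=1$, as both require tracking how small extensions interact simultaneously with the maximality of $p$ and $q$ as longest palindromic prefix/suffix and with the rigid valence-$2$ constraint from the RK-condition on $v$.
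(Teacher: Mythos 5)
Your overall strategy is the same as the paper's: apply Propositions~\ref{P:right-special-characterization} and~\ref{P:left-special-characterization} to the heart $v$ (via Lemma~\ref{L:K!=1} and Theorem~\ref{T:characterization1-take2}) to get uniqueness and valence~$2$ of special factors, identify a bispecial letter $x$, force $p$ and $q$ to alternate, and then case-analyse $u$. However, your very first structural claim has a genuine gap. You take an \emph{arbitrary} $x\in\Alph(p)\cap\Alph(q)$ and argue it is right special because "inside $p$ it is followed by a letter of $\Alph(p)$, inside $q$ by a letter of $\Alph(q)$, and as $a\ne b$ these are distinct." That inference does not follow: the follower of $x$ in $p$ need not be $a$ and the follower in $q$ need not be $b$; both could be one and the same letter of $\Alph(p)\cap\Alph(q)$ (or $x$ itself), in which case no right-valence $\geq 2$ is exhibited. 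The paper avoids this by choosing $x$ carefully: since $p$ is a palindrome containing letters of both $\Alph(p)\cap\Alph(q)$ and $\Alph(p)\setminus\Alph(q)$, some occurrence of a letter $x\in\Alph(q)$ in $p$ is immediately followed by a letter $a\in\Alph(p)\setminus\Alph(q)$; then, because $x$ also occurs in the palindrome $q$, it is followed there by some $b\in\Alph(q)$, and $a\ne b$ is automatic since $a\notin\Alph(q)$. A symmetric choice produces $y$ with followers $c\in\Alph(q)\setminus\Alph(p)$ and $d$, and only then do uniqueness ($x=y$) and valence~$2$ ($a=d$, $b=c$) kick in. Your argument needs exactly this repair before the alternation analysis can start.

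The endgame is also not complete as written. You exclude $|u|\geq 2$ by asserting that "a second right-special letter would appear," but in the problematic configurations the letters are in fact fine ($x$ remains the only special letter); the actual obstruction, as in the paper, is that if $u$ begins with $xb$ and ends with $ax$ (or the symmetric variants) then $v$ contains the six length-$3$ factors $axa$, $xax$, $axb$, $bxa$, $bxb$, $xbx$ over only four length-$2$ factors, so $C_v(3)-C_v(2)=2$, i.e., there would be two right-special factors of length \emph{two} --- contradicting Proposition~\ref{P:right-special-characterization}. You yourself flag this case analysis as "the main obstacle," and it is where the contradiction must be located correctly; the richness verification under condition~(i) is routine and matches the paper's.
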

In other words, the above lemma says that if $v$ is the heart of a GT-word with \textit{separated} $p$ and $q$ such that the alphabets of $p$ and $q$ are not disjoint and neither alphabet is a subset of the other, then $v$ takes the form given by condition~(i). All other hearts of the form $v=puq$ ($u \ne \empt$) are such that the alphabets of $p$ and $q$ are disjoint or one is a subset of the other.
\begin{remark}
For a given letter $x$, let $\varphi_x$ denote the morphisms defined as follows:
\[
\varphi_x: \begin{cases} x \mapsto x \\ y \mapsto yx \end{cases} \quad \mbox{for all letters $y \ne x$}, 
\]
i.e., $\varphi_x$ maps the letter $x$ to itself and inserts the letter $x$ to the \textit{right} of all other letters different from~$x$. 
A word $v$ satisfying condition (i) in Lemma~\ref{L:GT-rich} can be expressed as $v=\varphi_x(a^{n+1}b^m)b$, i.e., $v$ is obtained from the binary trapezoidal word $a^{n+1}b^{m+1}$ by inserting the letter $x \not\in \{a,b\}$ to the right of each letter except the last. The shortest such word takes the form $\varphi_x(aab)b$. 
\end{remark}

\begin{proof}[Proof of Lemma~\ref{L:GT-rich}]
We first observe that, for any word $v$ with longest palindromic prefix $p$ and longest palindromic suffix $q$, the alphabets of $p$ and $q$ satisfy one of the following three conditions.
\begin{itemize}
\item[(A)] $\Alph(p) \cap \Alph(q) \ne \emptyset$ and neither alphabet is a subset of the other.
\item[(B)] $\Alph(p) \subseteq \Alph(q)$ or $\Alph(q) \subseteq \Alph(p)$ (i.e., one alphabet is a subset of the other).
\item[(C)] $\Alph(p) \cap \Alph(q) = \emptyset$.
\end{itemize}
Furthermore, we observe that there exist hearts $v$ of GT-words of the form $v=puq$ ($u\ne \empt$) with $p$ and $q$ satisfying these conditions. For example, the GT-word $acacbcb$ has heart $v=acacbcb=puq$ (itself) where $p=aca$, $u=c$, $q=bcb$ with the alphabets of $p$ and $q$ satisfying condition (A). Condition (B) is satisfied, for instance, by the GT-word $ababadbc$ which has heart $v=ababadb = puq$ where $p=ababa$, $u=d$, $q=b$ with $\Alph(q) \subset \Alph(p)$. We also observe that the reversal of $v=ababadb$ (which is also a GT-word by Proposition~\ref{P:reversal-closure}) is such that the alphabet of its longest palindromic prefix (namely $b$) is contained in the alphabet of its longest palindromic suffix (namely $ababa$). An example of a heart of a GT-word of the form $v=puq$ satisfying condition (B) with $\Alph(p) = \Alph(q)$ is $abacbab$. And lastly, condition (C) is satisfied, for example, by the GT-word $aabcc$, which has heart $v=aabcc=puq$ (itself) where $p=aa$, $u=b$, $q=cc$. 

Now suppose, more generally, that $v$ is the heart of a GT-word of the form $v=puq$ where $u$ is a non-empty word. Then $|\Alph(v)| \geq 3$. Indeed, if $|\Alph(v)| = 1$, then $v=x^n$ for some letter $x$ and positive integer $n$, in which case $v=p=q$ (i.e., $p$ and $q$ are unseparated). And if $|\Alph(v)| = 2$ then $p$ and $q$ cannot be separated by a non-empty word in $v$ by Proposition~\ref{P:binary-hearts}. We also note that if $|w|=|\Alph(w)|$, then $w=v$ and such a word is simply a product of mutually distinct letters with its first and last letters being its longest palindromic prefix and longest palindromic suffix, respectively, and therefore it satisfies condition (iii) in the statement of the lemma. So we will henceforth assume that $|w| > |\Alph(w)|$, in which case the heart $v$ of $w$ satisfies $K_v > 1$ and $H_v > 1$. This means, in particular, that each letter in $\Alph(v)$ is both left and right extendable in $v$.

We showed above that there exist hearts $v$ of GT-words of the form $v=puq$ ($u \ne \empt$) with $p$ and $q$ satisfying conditions (ii) and (iii) in the statement of the lemma (which correspond to conditions (B) and (C) above). So let us now suppose that $v$ satisfies neither of those two conditions. Then $v$ satisfies condition (A) above, i.e., $\Alph(p) \cap \Alph(q) \ne \emptyset$ and there exist at least two distinct letters $a$ and $c$ such that $a \in \Alph(p)\setminus \Alph(q)$ and $c \in \Alph(q) \setminus \Alph(p)$. We now show that $p$, $q$, and $u$ must take the form given in condition (i) of the lemma.

Let us first note that the palindrome $p$ (resp.~$q$) must contain at least two distinct letters -- at least one letter in $\Alph(p) \cap \Alph(q)$ and at least one in $\Alph(p) \setminus \Alph(q)$ (resp.~$\Alph(q) \setminus \Alph(p)$). Hence the palindromes $p$ and $q$ each have length at least three. Now, $p$ contains a letter $x \in \Alph(q)$ such that $xa$ is a factor of $v$ for some letter $a \in \Alph(p) \setminus \Alph(q)$. Since $q$ is a palindrome, the letter $x$ will occur at least twice in $q$, except if $q$ has odd length and $x$ occurs only once as its central letter. In any case, $x$ will be right-extendable in $q$ by a letter $b \ne a$ (since $a \not\in \Alph(q)$). So $xa$ and $xb$ are factors of $v$ (where $a \ne x$ and $a \ne b$), and therefore $x$ is a right special factor of $v$ of length $1$. On the other hand, $q$ contains a letter $y \in \Alph(p)$ such that $yc$ is a factor of $v$ for some letter $c \in \Alph(q) \setminus \Alph(p)$. Since $p$ is a palindrome, the letter $y$ will occur at least twice in $p$, except if $p$ has odd length and $y$ occurs only once as its central letter. In any case, $y$ will be right-extendable in $p$ by a letter $d \ne c$ (since $c \not\in \Alph(p)$). So $yc$ and $yd$ are factors of $v$ (where $c \ne y$ and $c \ne d$), and therefore $y$ is a right special factor of $v$ of length $1$. We have thus shown that the letters $x$ and $y$ are right special factors of $v$. But since $v$ is a GT-word (with $|v| = R_v + K_v + |\Alph(v)| - 2$ by Theorem~\ref{T:characterization1-take2}), it contains at most one right special factor of each length (see Proposition~\ref{P:right-special-characterization}), and so we must have $x=y$. Therefore $xa$, $xb$, $xc$, and $xd$ are all factors of $v$ where $a\ne b$, $c\ne d$, and moreover, $a\ne c$ since $a \in \Alph(p)\setminus \Alph(q)$ and $c \in \Alph(q) \setminus \Alph(p)$. If $a \ne d$ or $b \ne c$, then $x$ would have right-valence at least 3 in $v$, which is not possible by Proposition~\ref{P:right-special-characterization}. (Alternatively, we observe that since each distinct letter in $v$ is right-extendable in $v$ because $K_v > 1$, the letter $x$ having right-valence at least $3$ would imply that  $C_v(2) \geq |\Alph(v)| + 2$, i.e., $C_v(2) \geq C_v(1) + 2$, contradicting the hypothesis that $v$ is a GT-word.) Hence we must have $a=d$ and $b=c$. It follows that the only factors of $p$ of length $2$ are $xa$ and $ax$ and the only factors of $q$ of length $2$ are $xb$ and $bx$. So we deduce that $p$ takes the form $p=(xa)^nx$ or $p=(ax)^na$ for some $n \in \mathbb{N}^+$ and that $q$ takes the form $q = (xb)^mx$ or $q=(bx)^mb$ for some $m \in \mathbb{N}^+$. Moreover, we note that $xa$, $ax$, $bx$, and $xb$ are the only factors of length $2$ of $v$. Indeed, $x$ must always be preceded or followed by either $a$ or $b$, otherwise $x$ would have right or left valence greater than $2$, and also each of the letters $a$ and $b$ must always be preceded or followed by the letter $x$ in $v$, otherwise we would have another left or right special factor of length $1$ in addition to $x$ (which is bispecial), contradicting the hypothesis that $v$ is a GT-word. So the word $u$ that separates $p$ and $q$ in $v=puq$ cannot contain any letters different from $a$, $b$, or $x$, and any factor of $u$ of length $2$ (if $|u|\geq 2$) must be one of the words $ax$, $xa$, $bx$, $xb$.

We now separately consider the two possible forms of each of $p$ and $q$ with respect to the ``separating word'' $u$.

\textbf{Case 1:} $p=(ax)^na$ for some $n \in \mathbb{N}^+$. In this case, if $u$ begins with the letter $a$, then $a$ would be a right special factor of length $1$; a contradiction. So $u$ must begin with the letter $x$. If $u=x$, then $v$ satisfies condition (i) in the statement of lemma. Otherwise, if $|u| \geq 2$, then $u$ must begin with $xa$ or $xb$ since $x$ must be followed by either $a$ or $b$ in $v$. But if $u$ begins with $xa$, then $v$ begins with the palindrome $(ax)^naxa$ longer than $p$. Thus $u$ begins with $xb$.

\textbf{Case 2:} $p=(xa)^nx$ for some $n \in \mathbb{N}^+$. In this case, $u$ must begin with the letter $a$ or $b$ since $x$ can only be followed and preceded by one of these letters. If $u$ begins with the the letter $a$ (which must always be followed by the letter $x$), then $v$ would begin with the palindrome $(xa)^nxax = (xa)^{n+1}x$ longer than $p$, which is not possible. So $u$ must begin with the letter $b$. If $u=b$, then $q$ must begin with $x$ (not $b$), and so $q$ takes the form $q=(xb)^mx$ for some $m \in \mathbb{N}^+$, in which case $v$ would end with the palindrome $xb(xb)^mx$ longer than $q$. So $|u| \geq 2$ and $u$ begins with $bx$.

\textbf{Case 3:} $q=(bx)^mb$ for some $m \in \mathbb{N}^+$. Using similar reasoning as in Case 1, we deduce that either $v$ satisfies condition (i) or that $u$ must end with $ax$.

\textbf{Case 4:} $q=(xb)^mx$ for some $m \in \mathbb{N}^+$. Using similar reasoning as in Case 2, we deduce that $u$ must end with $xa$.

From the above considerations, we see that either $v$ satisfies condition (i), or $u$ takes one of the following forms:
\[
u = xbu'ax, \quad u = xbu'xa, \quad u = bxu'ax, \quad u = bxu'xa, \quad \mbox{or} \quad u=bxa.
\]
But if $u$ takes any of the above forms, we see that $v$ contains the following $6$ factors of length $3$: $axa$, $xax$, $axb$, $bxa$, $bxb$, $xbx$. This means that the complexity of $v$ jumps by $2$ from $C_v(2) = 4$ to $C_v(3) = 6$, contradicting the hypothesis that $v$ is a GT-word. 

Hence we have shown that  $v$ must satisfy one of the conditions (i)--(iii).

Lastly, it is easily verified that if $v$ satisfies condition (i), then each prefix of $v$ has a unioccurrent palindromic suffix, and hence $v$ is rich by Proposition~\ref{P:xDjJgP01epis}. Thus $w$ is rich by Lemma~\ref{L:rich-heart}.
\end{proof}

\begin{lemma} \label{L:sep-by-x}
Let $w$ be a GT-word with heart $v$ having longest palindromic prefix $p$ and longest palindromic suffix $q$. Suppose $v= pxq$ where $x$ is a letter and $\Alph(p) \subseteq \Alph(q)$ or $\Alph(q) \subseteq \Alph(p)$. Then $w$ is rich if and only if $x \in \Alph(p) \cup \Alph(q)$. 
\end{lemma}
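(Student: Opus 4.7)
By Lemma~\ref{L:rich-heart}, the equivalence for $w$ reduces to the same statement for its heart $v$. By Proposition~\ref{P:reversal-closure} (which swaps the roles of $p$ and $q$), I may assume without loss of generality that $\Alph(q) \subseteq \Alph(p)$, so $\Alph(p) \cup \Alph(q) = \Alph(p)$; it remains to prove $v$ is rich if and only if $x \in \Alph(p)$.

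For the necessity direction I argue the contrapositive via Proposition~\ref{P:aGjJsWlZ09pali}. Suppose $x \notin \Alph(p)$; then $x$ occurs uniquely in $v$ at position $|p|+1$, and every palindromic factor of $v$ containing $x$ must be centred at $x$. First, $q$ cannot be a prefix of $p$: otherwise the palindromicity of $p$ makes $v[|p|-|q|+1..|v|]$ equal to $q\cdot x\cdot q$, a palindromic suffix of length $2|q|+1 > |q|$, contradicting the maximality of $q$. For any letter $y \in \Alph(q) \subseteq \Alph(p)$, let $k_y$ and $j_y$ be the first positions of $y$ in $p$ and $q$ respectively; using the palindromicity of $p, q$, one checks that the complete return in $v$ from the last $y$ in $p$ to the first $y$ in $q$ equals $y \cdot \widetilde{p[1..k_y-1]} \cdot x \cdot q[1..j_y-1] \cdot y$, which is palindromic iff $k_y = j_y$ and $p[1..k_y-1] = q[1..k_y-1]$. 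If $p[1] \ne q[1]$, taking $y = q[1]$ gives $k_y \ge 2$, and the middle of the complete return begins with $p[k_y-1] \in \Alph(p)$ while ending in $x \notin \Alph(p)$, so it is not a palindrome. Otherwise $p[1] = q[1]$, and since $q$ is not a prefix of $p$ there is a smallest $i_0 \ge 2$ with $p[i_0] \ne q[i_0]$; I case-split on whether $q[i_0]$ is new or repeated in $q[1..i_0]$ and on whether $p[i_0] \in \Alph(q)$, exhibiting in each ``easy'' case a letter $y$ with $k_y \ne j_y$ producing a non-palindromic complete return.

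For the sufficiency direction, assume $x \in \Alph(p)$. Using Proposition~\ref{P:xDjJgP01epis}, I show every prefix of $v$ has a unioccurrent palindromic suffix (ups). Prefixes of the palindrome $p$ are rich, since palindromes and their factors are rich. For each extended prefix $p \cdot x \cdot q[1..i]$ with $i = 0, 1, \ldots, |q|$, I track the longest palindromic suffix $L_i$: if $L_i$ is contained in $q[1..i]$ then its unioccurrence in $p \cdot x \cdot q[1..i]$ follows from the richness of $q$ together with the fact that $L_i$ cannot also occur in $p$ without violating the GT-word complexity constraint (Proposition~\ref{P:right-special-characterization}, applicable since $v$ satisfies the $RK$-condition by Theorem~\ref{T:characterization1-take2}); if $L_i$ straddles $x$, the same uniqueness of right-special factors at each length below $R_v$ forces $L_i$ to be unioccurrent.

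The principal obstacle is the residual subcase of the necessity argument where $p[1] = q[1]$, $q[i_0]$ is a repeated letter of $q[1..i_0-1]$, and $p[i_0] \in \Alph(p) \setminus \Alph(q)$: since $p[i_0]$ never appears in $q$, no complete return using $p[i_0]$ crosses the boundary, and the direct argument stalls. I would resolve this by using the GT-word constraint that $v$ has exactly one right-special factor of each length below $R_v$ with right-valence $2$: the configuration in this subcase would force either two distinct right-special factors of a common length, or a right-special factor of right-valence at least $3$, contradicting Proposition~\ref{P:right-special-characterization} and so ruling the subcase out entirely.
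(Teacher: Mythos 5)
Your reduction to the heart via Lemma~\ref{L:rich-heart} and the use of reversal (Proposition~\ref{P:reversal-closure}) to assume $\Alph(q)\subseteq\Alph(p)$ match the paper, and your computation of the complete return between the last occurrence of a letter $y$ in $p$ and its first occurrence in $q$ is correct as far as it goes. But there are genuine gaps in both directions. In the necessity direction you yourself flag a ``residual subcase'' ($p[1]=q[1]$, $q[i_0]$ repeated, $p[i_0]\in\Alph(p)\setminus\Alph(q)$) and merely assert that Proposition~\ref{P:right-special-characterization} ``would force'' a contradiction; no contradiction is exhibited, and this is exactly where the combinatorial work lies. (There is also a smaller slip: ``$q$ is not a prefix of $p$'' does not by itself produce an index $i_0$ with $p[i_0]\ne q[i_0]$, since $p$ could be a proper prefix of $q$.) The paper avoids your difficulty by arguing non-constructively: assuming $v$ rich \emph{and} $x\notin\Alph(p)\cup\Alph(q)$, it picks a common letter $a$ of $p$ and $q$, writes $p=p'ap''$ and $q=q''aq'$ with $p''$, $q''$ free of $a$, and applies Proposition~\ref{P:aGjJsWlZ09pali} to the complete return $ap''xq''a$ to force $p''=q''=\empt$; richness plus the valence constraints then pin $v$ down to $(az)^m a x (az)^n a$, whose longest palindromic prefix and suffix are not separated --- a contradiction. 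Because richness is available at every step, the structure collapses quickly; your contrapositive route must exhibit one explicit bad return using only $x\notin\Alph(p)$, which is why it stalls.

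In the sufficiency direction the argument is not yet a proof: the claims that ``$L_i$ cannot also occur in $p$ without violating the GT-word complexity constraint'' and that ``uniqueness of right-special factors \ldots forces $L_i$ to be unioccurrent'' are non sequiturs --- a factor occurring in both $p$ and $q$ violates no complexity condition, and unioccurrence of the longest palindromic suffix of a prefix does not follow from uniqueness of right-special factors. The paper instead uses the valence constraints (Propositions~\ref{P:right-special-characterization} and \ref{P:left-special-characterization}) to show that $v$ contains no square of a letter and to derive the explicit normal form $p=[b(xa)^mx]^kb$, $q=(ax)^na$, hence $v=[b(xa)^mx]^kb(xa)^{n+1}$, for which the unioccurrent-palindromic-suffix property of Proposition~\ref{P:xDjJgP01epis} is checked directly. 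Some version of this structural classification appears unavoidable, and your sketch skips it.
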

\begin{proof} 
We first note that $|w| > |\Alph(w)|$ because if $|w| = |\Alph(w)|$, then $w=v$ and $v$ would be a product of mutually distinct letters which does not take the assumed form. Hence $K_v > 1$ and $H_v > 1$. It is also easy to see that $p\ne q$ and $|\Alph(v)| \ne 1$. Furthermore, if $|\Alph(v)| =2$ then $p$ and $q$ cannot be separated by a non-empty word in $v$ (by Proposition~\ref{P:binary-hearts}). Hence $|\Alph(v)| \geq 3$. %We also note that if $w$ is rich, then $v$ is rich (by Lemma~\ref{L:rich-heart}). 

($\Rightarrow$): Suppose $w$ is rich, then $v$ is rich (by Lemma~\ref{L:rich-heart}). By way of contradiction, let us suppose that $x \not\in \Alph(p) \cup \Alph(q)$. Without loss of generality, we will assume that $\Alph(q) \subseteq \Alph(p)$. Since $p$ and $q$ contain at least one letter in common, there exists a letter $a \ne x$ such that $p=p'ap''$ and $q=q''aq'$ where $p''$ and $q''$ are (possibly empty) words that contain no letters in common. But $ap''xq''a$ is a complete return to $a$, so it must be a palindrome (since $v$ is rich), which implies that $p'' = q'' = \empt$. So $v=p'axaq'$ where $p' \ne \rev{q'}$ because $v$ is not a palindrome. Moreover, both $p'$ and $q'$ must be non-empty. Indeed, if $p'=\empt$, then $v=axaq'$ and we see that $v$ begins with the palindrome $axa$ which is longer than $p=a$; a contradiction. Likewise, if $q'=\empt$, we reach a contradiction. Now, since both $p'$ and $q'$ are non-empty, there exists a letter $y \in \Alph(p')$ and a letter $z \in \Alph(q')$ (with $y \ne x$ and $z \ne x$) such that $ya$ and $az$ are factors of $v$. But then, since $xa$ and $ax$ are also factors of $v$, we see that the letter $a$ is a bispecial factor of $v$ of length $1$. Moreover, since $K_v > 1$, each letter in $\Alph(v)$ is right-extendable in $v$, and therefore the letter $a$ must always be followed by either $x$ or $z$ in $v$. For if not, then $a$ would have right-valence greater than $2$ in $v$, which would imply that $C_v(2) \geq |\Alph(v)| + 2$, i.e., $C_v(2) \geq C_v(1) + 2$, contradicting the hypothesis that $v$ is a GT-word (also see Proposition~\ref{P:right-special-characterization}). Likewise, the letter $a$ can only be preceded by the letter $x$ or $y$ in $v$ by Proposition~\ref{P:left-special-characterization}. Since $p$ is a palindrome containing $ya$, we see that $ay$ is a factor of $p$. Similarly, since $q$ is a palindrome containing $az$, we see that $za$ must be a factor of $q$. We thus deduce that $y=z$. So the only factors of $v$ of length $2$ are $za$, $az$, $ax$, and $xa$. If $z=a$, then $p=a^m$ and $q=a^n$ for some $m, n \in \mathbb{N}^+$, in which case $v=a^mxa^n$, contradicting the hypothesis that the longest palindromic prefix and longest palindromic suffix of $v$ are separated. So $z \ne a$ and we deduce that $p$ takes the form $p=(az)^ma$ for some $m \in \mathbb{N}^+$ and $q$ takes the form $q=(az)^na$ for some $n \in \mathbb{N}^+$. But then $v=(az)^m a x (az)^n a$, in which case either $v$ is a palindrome (if $m=n$) or the longest palindromic prefix and longest palindromic suffix of $v$ overlap in $v$; a contradiction. Hence $x \in \Alph(p) \cup \Alph(q)$. 

($\Leftarrow$): Conversely, suppose $x \in \Alph(p) \cup \Alph(q)$. We will show that $v$ (and hence $w$) is rich by determining the possible forms of $p$ and $q$ in $v$. Without loss of generality, we will assume that $\Alph(q) \subseteq \Alph(p)$. Then $\Alph(p) = \Alph(v)$, and so $p$ contains at least three distinct letters (since $|\Alph(v)| \geq 3$); in particular, $p$ contains the letter $x$ and at least two other distinct letters $a$ and $b$ (different from $x$). We first observe that $v$ cannot contain the square of a letter. Indeed, if $yy$ were a factor of $v$ for some letter $y$, then $y$ would be a factor of $p$ (since $\Alph(p) = \Alph(q)$), and therefore $y$ would be preceded and followed by some letter $z \ne y$ (since $|\Alph(p)| \geq 3$). But then $yy$ would be bispecial and $y$ can only ever be preceded (and followed) by itself and $z$ in $v$, otherwise $y$ would have left or right valence more than $2$, which is impossible by Propositions~\ref{P:right-special-characterization} and \ref{P:left-special-characterization}. Likewise, the letter $z$ can only ever be preceded and followed by the letter $y$ in $v$, which implies that $v$ is a binary word; a contradiction. So $v$ does not contain the square of a letter. Moreover, using similar arguments as in the proof of Lemma~\ref{L:GT-rich} (with the aid of Propositions~\ref{P:right-special-characterization}--\ref{P:left-special-characterization} again), we deduce that $p$ (and hence $v$) contains only two distinct letters different from $x$ and that $p$ and $q$ must take the following forms:
\[
p=[b (xa)^m x]^k b \quad \mbox{and} \quad q = (ax)^n a
\]
where $a$ and $b$ are distinct letters (different from $x$) and $k, m, n \in \NN^+$ with $n \geq m$. That is,
\[
v=pxq=[b(xa)^mx]^kb(xa)^{n+1}
\]
with $|v| = |p| + |q| + 1 = R_v + K_v + |\Alph(v)|-2$ where $R_v=|p|$ and $K_v=|q|$.  %(e.g., $v=(bxaxbxaxb)x(axa)$ with $R_v=|p|=|bxaxbxaxb| = 9$, $K_v=|q|=|axa|=3$, and $|v|=R_v+K_v+1=13$). 
(Note that when $\Alph(p) \subseteq \Alph(q)$ the forms of $p$ and $q$ are opposite to those given above.) It is easy to see that every prefix of any such $v$ has a unioccurrent palindromic suffix. Hence $v$ is rich (by Proposition~\ref{P:aGjJsWlZ09pali}), and thus $w$ is rich by Lemma~\ref{L:rich-heart}.
\end{proof}

\begin{lemma} \label{L:sep-by-u}
Let $w$ be a GT-word with heart $v$ having longest palindromic prefix $p$ and longest palindromic suffix $q$. If $v= puq$ where $|u|\geq 2$ and $\Alph(p) \subseteq \Alph(q)$ or $\Alph(q) \subseteq \Alph(p)$, then $\Alph(u)$ is not contained in $\Alph(p) \cup \Alph(q)$. Moreover, $w$ is not rich.
\end{lemma}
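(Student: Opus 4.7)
The plan is to prove both assertions by contradiction, leveraging the GT-word structural constraints from Propositions~\ref{P:right-special-characterization} and~\ref{P:left-special-characterization} together with the richness characterizations in Propositions~\ref{P:aGjJsWlZ09pali} and~\ref{P:xDjJgP01epis}. By Proposition~\ref{P:reversal-closure} and the observation that the heart of $\rev w$ is $\rev v$, I may assume WLOG that $\Alph(q) \subseteq \Alph(p)$, so $\Alph(p) \cup \Alph(q) = \Alph(p)$.

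For the alphabet claim, I suppose for contradiction that $\Alph(u) \subseteq \Alph(p)$, hence $\Alph(v) = \Alph(p)$. I first rule out $|\Alph(v)| \leq 2$: when $|\Alph(v)|=1$ the word $v$ is a single-letter power forcing $p=v=q$; when $|\Alph(v)|=2$ Proposition~\ref{P:binary-hearts} prevents $p$ and $q$ from being separated in $v$; both contradict the assumed form $v=puq$ with $|u|\geq 2$. With $|\Alph(v)|\geq 3$, I replay the bispecial-letter and length-$2$ factor analysis from the proof of Lemma~\ref{L:sep-by-x}: the GT-word constraints (each right/left special factor of length~$1$ has valence exactly~$2$ and there is only one such factor of each length) together with the maximality of $p$ and $q$ as longest palindromic prefix and suffix force $\Alph(v)=\{a,b,x\}$ and restrict the length-$2$ factors of $v$ to $\{ax,xa,bx,xb\}$. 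The final step is to show, via the allowed length-$2$ factor structure, that the first and last letters of $u$ (both forced to lie in $\{a,b,x\}$) must be~$x$, and that no intermediate letter of $u$ can be chosen without either producing a palindromic prefix of $v$ properly longer than $p$, a palindromic suffix properly longer than $q$, or an extra bispecial letter, yielding a contradiction.

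For the richness claim, fix $y\in\Alph(u)\setminus(\Alph(p)\cup\Alph(q))$ guaranteed by Part~1, so $y$ occurs in $v$ only within $u$. Pick any $a\in\Alph(q)\subseteq\Alph(p)\cap\Alph(q)$ and let $j=|p|+|u|$, the position of the first letter of $q$, which equals $a$. Let $i$ be the position of the last occurrence of $a$ in $pu$, so $v[i..j]=aTa$ is a complete return to $a$ which, by Proposition~\ref{P:aGjJsWlZ09pali}, must be a palindrome if $v$ is rich. If $a\notin\Alph(u)$ then $T=Su$, where $S$ is the (possibly empty) suffix of $p$ strictly after its last $a$; since $\Alph(S)\subseteq\Alph(p)\setminus\{a\}$ contains no $y$ while $u$ contains $y$, a position-by-position palindromicity check forces the mirror of each $y$-position in $T$ to lie within $S$, which is impossible. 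If instead $a\in\Alph(u)$ I apply a symmetric argument either to the shorter complete return to $a$ lying inside $uq$ (reducing to the previous case after replacing $a$ by another letter of $\Alph(q)$ not in $\Alph(u)$ when such a letter exists), or to $\rev v$ via Proposition~\ref{P:reversal-closure}, taking the first letter of $p$ in place of $a$. Either way, the resulting non-palindromic complete return shows $v$ is not rich, hence $w$ is not rich by Lemma~\ref{L:rich-heart}.

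The main obstacle will be the case analysis in Part~2 when $a\in\Alph(u)$: the natural candidate complete return may itself be palindromic, so one must either choose $a$ carefully from $\Alph(q)$ or switch to $\rev v$ and use the first letter of $p$, and in every subcase verify that at least one relevant complete return contains a $y$ with no admissible mirror partner. A closely related subtlety in Part~1 is checking that the rigid length-$2$ factor structure of Lemma~\ref{L:sep-by-x} survives when the separator has length $\geq 2$, so that every way of extending allowed length-$2$ factors through $u$ either lengthens $p$ or $q$ or introduces an additional bispecial letter.
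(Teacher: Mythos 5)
There are genuine gaps in both halves of your argument, and in each case the gap sits exactly where the paper has to work hardest. For the alphabet claim, the paper does not use a length-$2$-factor analysis at all: it shows (via minimality arguments as in Proposition~\ref{P:binary-hearts}) that $p$ and $q$ are unioccurrent, deduces $R_v\leq H_v\leq|p|$ and $L_v\leq K_v\leq|q|$, hence $R_v=L_v$ and $K_v=H_v$, and then observes that $\Alph(u)\subseteq\Alph(p)$ makes $pu$ itself the heart of a GT-word, so Corollary~\ref{C:characterization1-take2} applied to $pu$ forces $L_{pu}+H_v\leq R_v\leq H_v$, i.e.\ $L_{pu}=0$, which is absurd. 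Your alternative route asserts that the analysis of Lemma~\ref{L:sep-by-x} ``replays'' to force $\Alph(v)=\{a,b,x\}$ with length-$2$ factors $\{ax,xa,bx,xb\}$, but that analysis leans on the separator being a \emph{single} letter (it is the bispeciality of that one letter that collapses the alphabet to three letters); you give no derivation of the corresponding claim when $|u|\geq 2$, and the decisive step --- that no choice of the letters of $u$ avoids lengthening $p$ or $q$ or creating an extra bispecial letter --- is pure assertion. Since the hypothesis being refuted is vacuous, your intermediate claims cannot be tested against examples; they must be derived, and that derivation is the entire content of the statement.

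For the richness claim, the key step is simply false as stated: if $T=Su$ is a palindrome, the mirror of a $y$-position lying in $u$ need not lie in $S$ --- it can lie in $u$ itself. The cleanest failure is $S=\empt$ with $u$ a palindrome over letters disjoint from $\Alph(p)\cup\Alph(q)$: then the straddling complete return is $aua$, which \emph{is} a palindrome, so your chosen return yields no contradiction, and passing to $\rev{v}$ or re-choosing $a$ runs into the same obstruction by symmetry. This is precisely the subcase the paper must treat separately (Case~2 of its proof, ``$u$ is a palindrome''): there one first shows that $p$ and $q$ contain no special factors and hence must have the rigid forms $p=(ab)^ma$ and $q=(ba)^nb$, and only then finds a non-palindromic complete return ($baub$) located differently from the naive candidate. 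The paper also needs the ``tediously shown'' Types~1--3 classification for the case where $u$ meets $\Alph(p)\cup\Alph(q)$ as well as its complement; your sketch (case split on whether $a\in\Alph(u)$) does not reach, and cannot substitute for, that structural analysis. So while your overall plan (exhibit a non-palindromic complete return straddling $u$) is the right instinct and works in the easy cases, the hard cases are exactly the ones your argument does not close.
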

\begin{proof} As in the proof of Lemma~\ref{L:sep-by-x}, we first note that $|w| > |\Alph(w)|$ because if $|w| = |\Alph(w)|$, then $w=v$ and $v$ would be a product of mutually distinct letters which does not take the assumed form. Hence $K_v > 1$ and $H_v > 1$. We must also have $|\Alph(v)| \geq 3$, because if $|\Alph(v)| = 1$ then $v$ would be a palindrome (i.e., $v=p=q$), and if $|\Alph(v)| = 2$, then $p$ and $q$ cannot be separated by a non-empty word in $v$ by Proposition~\ref{P:binary-hearts}.

Without loss of generality, we will assume that $\Alph(q) \subseteq \Alph(p)$. Suppose on the contrary that $v$ is a heart of a GT-word of minimal length such that $v=puq$ where $|u|\geq 2$ and the alphabet of $u$ is contained in $\Alph(p) \cup \Alph(q)$ (where $\Alph(q) \subseteq \Alph(p)$). 

Using very similar reasoning as in the proof of Proposition~\ref{P:binary-hearts} (and keeping in mind that any left/right special factor of $v$ has left/right valence $2$ by Propositions~\ref{P:right-special-characterization} and \ref{P:left-special-characterization}), we deduce that $p \ne q$ and that both $p$ and $q$ are unioccurrent in $v$. Hence $H_v \leq |p|$ and $K_v \leq |q|$. Moreover, one can also show (using much the same reasoning as in the proof of Proposition~\ref{P:binary-hearts}) that $R_v \leq H_v$ and $L_v \leq K_v$, from which it follows that $R_v=L_v$ and $K_v=H_v$ since $\max\{R_v, K_v\} = \max\{L_v, H_v\}$ and $\min\{R_v,K_v\} = \min\{L_v, H_v\}$ by \cite[Corollary 4.1]{aD99onth} and Proposition~\ref{P:RKLH}. This implies that $|pu| \leq R_v + |\Alph(v)| - 2$ since $|pu| = |v| - |q|$ where $|v| = R_v + K_v + |\Alph(v)| - 2$ (by Theorem~\ref{T:characterization1-take2}) and $K_v \leq |q|$. Moreover, since $\Alph(u) \subseteq \Alph(p)$, we have $K_{pu} > 1$ and $H_{pu} > 1$, and therefore $pu$ is (the heart of) a GT-word (by Proposition~\ref{P:factors-closed}). Hence $|pu| = L_{pu} + H_{pu} + \Alph(pu) - 2$ (by Corollary~\ref{C:characterization1-take2}), where $\Alph(pu) = \Alph(v)$ and $H_{pu} = H_v = K_v$ since $H_v \leq |p|$. Therefore
\[
|pu| = L_{pu} + H_v + |\Alph(v)| - 2 \leq R_v + |\Alph(v)| - 2,
\]
i.e., $L_{pu} + H_v \leq R_v$. But $R_v \leq H_v$. This implies that $R_v = H_v$ and $L_{pu} = 0$, which is impossible. 

Thus $\Alph(u)$ is not contained in $\Alph(p) \cup \Alph(q)$; in particular, $u$ contains a letter not in $\Alph(p) \cup \Alph(q)$. Suppose that $u$ also contains a letter in common with $p$ or $q$. Then, using the fact that $v$ contains at most one left (resp.~right) special factor of each length, with any such factor having left (resp.~right) valence~$2$ (Propositions~\ref{P:right-special-characterization} and \ref{P:left-special-characterization}), it can be (tediously) shown that $v$ must take one of the following forms (\textit{cf.} arguments in the proofs of Lemmas~\ref{L:GT-rich} and \ref{L:sep-by-x}).
\begin{itemize}
\item \textbf{Type 1:} $v=aZ_1aZ_2a$ where $a$ is a letter, each $Z_i$ is a product of at least two mutually distinct letters different from $a$ (i.e., $|Z_i| = |\Alph(Z_i)| \geq 2$ and $a \not\in \Alph(Z_i)$ for $i=1,2$), and $\Alph(Z_1) \cap \Alph(Z_2) = \emptyset$.
\item \textbf{Type 2:} $v=(ab)^{m+1}Z(ab)^{n-1}a$ where $a$, $b$ are distinct letters, $m, n \in \NN^+$,  and $Z$ is a product of mutually distinct letters (possibly just a single letter) different from $a$ and $b$ (i.e., $|Z| = |\Alph(Z)|$ and $a,b \not\in \Alph(Z)$).
\item \textbf{Type 3:} The reverse of Type 2.
\end{itemize}
Note that hearts $v$ of Type 1 have $p=q=a$ and $u=Z_1aZ_2$ (e.g., $abcadea$); hearts of Type~2 have $p=(ab)^{m}a$, $u=bZ$, $q=(ab)^{n-1}a$ (e.g., $ababcaba$); and hearts of Type~3 have $p=(ab)^{n-1}a$, $u=Zb$, $q=(ab)^{m}a$ (e.g., $adcbaba$). It is easy to see that any such word is not rich. Indeed, if $v$ is of Type~1, then $v$ contains two non-palindromic complete returns to the letter $a$ (namely $aZ_1a$ and $aZ_2a$). On the other hand, if $v$ is of Type~2 (or similarly Type~3), then $v$ contains the non-palindromic complete return $abZa$ (or $aZba$) to the letter $a$.

It remains to show that if $u$ has no letters in common with $p$ and $q$, then $v$ (and hence $w$) is not rich. We distinguish two cases, as follows.

\textbf{Case 1:} \textit{$u$ is not a palindrome}. Since $\Alph(q) \subseteq \Alph(p)$, the first letter $a$ of $q$ is a factor of $p$ and we can write $p=p'ap''$ and $q=aq'$ where $p''$ is a (possibly empty) word not containing the letter $a$. That is, $v=p'ap''uaq'$ where $ap''ua$ is a non-palindromic complete return to the letter $a$. Thus $v$ is not rich.

\textbf{Case 2:} \textit{$u$ is a palindrome}. If $p=q$ then $v$ ($=pup$) would be a palindrome. But this is absurd because if $v$ were a palindrome, then we would necessarily have $v=p=q$ by definition of $p$ and $q$. Therefore $p \ne q$. Let $x$ denote the last letter of $u$. Since $|u|\geq 2$, there exists a letter $y$ (possibly equal to $x$) such that $xy$ and its reversal $yx$ are factors of $u$. Moreover, if $\ell$ is the last letter of $p$ and $f$ is the first letter of $q$, then $\ell x$ and $xf$ are factors of $v$ where $\{\ell, f\} \cap \{x,y\} = \emptyset$ since $u$ contains no letters in common with $p$ and $q$. So the letter $x$ is a bispecial factor of $v$ of length $1$. This means that neither $p$ nor $q$ contains a left or right special factor, otherwise $v$ would contain another left or right special factor different from $x$, which is not possible by Propositions~\ref{P:right-special-characterization} and \ref{P:left-special-characterization}. So either $p = a^m$ for some letter $a$ and $m \in \mathbb{N}^+$ or $p=(ab)^ma$ where $a$, $b$ are distinct letters and $m \in \mathbb{N}^+$. Likewise, since $q$ has no left or right special factor of length $1$ and $\Alph(q) \subseteq \Alph(p)$, it follows that $q$ must take the form $a^n$, $(ab)^na$ or $(ba)^nb$ for some $n \in \mathbb{N}^+$. If $p=a^m$, then $q=a^n$ where $n\ne m$ (since $p \ne q$ and $\Alph(q) \subseteq \Alph(p)$). However, if this was the case, then $v$ would begin (or end) with a palindrome of the form $a^kua^k$ longer than $p$ (or $q$). So $p$ must take the form $(ab)^ma$ for some $m \in \mathbb{N}^+$. If $q$ also takes the form $(ab)^na$ for some $n \in \mathbb{N}^+$, then $v$ would begin (or end) with a palindrome of the form $(ab)^kau(ab)^ka$ longer than $p$ (or $q$). So $q$ must take the form $(ba)^nb$. But then $v=(ab)^mau(ba)^mb$ which contains the non-palindromic complete return $baub$ to the letter $b$ (and also the non-palindromic complete return $auba$ to the letter~$a$). Thus $v$ is not rich.
\end{proof}

\pagebreak

\begin{lemma} \label{L:rich-disjoint}
Let $w$ be a GT-word with heart $v$ having longest palindromic prefix $p$ and longest palindromic suffix $q$. Suppose $v=puq$ where $u$ is a non-empty word and $\Alph(p) \cap \Alph(q) = \emptyset$. Then $v$ is rich if and only if there exist words $u_1$, $u_2$, and $Z$ (at least one of which is non-empty) such that $u=u_1Zu_2$ where $\Alph(u_1) \subseteq \Alph(p)$, $\Alph(u_2) \subseteq \Alph(q)$, and $Z$ contains no letters in common with $p$ and $q$. 
Moreover, $|Z|=|\Alph(Z)|$ and there exist mutually distinct letters $a$, $b$, $x$, $y$ and positive integers $m, n$ such that one of the following conditions is satisfied:
\begin{itemize}
\item[(i)] $p= (ab)^ma$, $u=Zx$, $q=(yx)^ny$;
\item[(ii)] $p= (ba)^mb$, $u=aZx$, $q=(yx)^ny$;
\item[(iii)] $p= a^{m+1}$, $u=Zx$, $q=(yx)^ny$;
\item[(iv)] $p = (ab)^{m}a$, $u = Z$, $q = x^{n+1}$;
\item[(v)] $p=(ab)^ma$, $u =Z$, $q = (xy)^nx$;
\item[(vi)] $p = a^{m}$, $u = Z$, $q = x^{n}$;
\item[(vii)] the forms of $p$ and $q$ are the opposite of those satisfying one of the conditions \emph{(i)--(vi)} with the reversal of the corresponding $u$ separating them.
\end{itemize}
\end{lemma}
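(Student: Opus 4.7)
The plan is to prove both implications by combining the two characterizations of richness—complete returns to palindromes being palindromes (Proposition~\ref{P:aGjJsWlZ09pali}) and each prefix having a unioccurrent palindromic suffix (Proposition~\ref{P:xDjJgP01epis})—with the strict GT-constraints on $v$ provided by Propositions~\ref{P:right-special-characterization} and~\ref{P:left-special-characterization}, which guarantee that $v$ has exactly one right (respectively, left) special factor of each length less than $R_v$ (respectively, $L_v$), each of valence~$2$.

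For the forward direction, assume $v$ is rich. I would first establish the coarse factorization $u = u_1 Z u_2$ with $\Alph(u_1)\subseteq\Alph(p)$, $\Alph(u_2)\subseteq\Alph(q)$, and $\Alph(Z)\cap(\Alph(p)\cup\Alph(q))=\emptyset$. Suppose instead that some $a\in\Alph(p)$ occurs in $u$ preceded by a letter not in $\Alph(p)$. Taking the rightmost occurrence of $a$ in $p$ and the earliest such ``misplaced'' occurrence in $u$ bounds a complete return to $a$; by Proposition~\ref{P:aGjJsWlZ09pali} this return is a palindrome, and its palindromic symmetry straddles the $p$--$u$ boundary in an unnatural way, since the $p$-side uses only $\Alph(p)$-letters while the $u$-side already contains letters outside $\Alph(p)$. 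A short case analysis using the maximality of $p$ and the GT-constraints then yields a contradiction; a symmetric argument rules out $\Alph(q)$-letters appearing too early in $u$. Next I would show $|Z|=|\Alph(Z)|$: a repeated letter $c\in\Alph(Z)$ would have both occurrences inside $u$, yielding a palindromic complete return to $c$ whose symmetric structure again forces extra special factors in $v$, contradicting the GT-property.

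The main case analysis then derives the explicit forms (i)--(vii). The key sub-claims are that $p$ and $q$ each use at most two distinct letters, and that $|u_1|\leq 1$ and $|u_2|\leq 1$. If $p$ contained three distinct letters, the end of $p$ interacting with the beginning of $u$ would force either two distinct length-$1$ right special factors of $v$ or one of valence $\geq 3$, both forbidden. A palindrome on at most two letters is necessarily $a^n$, $(ab)^m a$, or $(ba)^m b$, so combined with the symmetric analysis on $q$ we obtain the listed shapes. The size restrictions on $u_1$ and $u_2$ follow from maximality of $p$ and $q$: a longer $u_1$ would either extend $p$ into a strictly longer palindromic prefix of $v$ or create an additional length-$1$ special factor, and similarly for $u_2$. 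The main obstacle is the intricate case analysis required to rule out every borderline configuration, particularly verifying that the extensions $p\cdot(\text{first letter of }u)$ and $(\text{last letter of }u)\cdot q$ do not accidentally produce strictly longer palindromic prefixes or suffixes.

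For the backward direction, given $v$ of one of the forms (i)--(vii), I would directly verify that every prefix of $v$ has a unioccurrent palindromic suffix, so $v$ is rich by Proposition~\ref{P:xDjJgP01epis}. While reading through $p$, one uses the known richness of palindromes of the form $a^n$ or $(ab)^m a$ on at most two letters; while reading through the (at most one letter) $u_1$, one checks that the longest palindromic suffix of $p u_1$ is unioccurrent using the alternating or power structure of $p$; while reading through $Z$, each freshly encountered letter is itself a unioccurrent palindromic suffix since it does not occur elsewhere in $v$; and while reading through $u_2 q$, the longest palindromic suffix at each new position lies inside $u_2 q$ and is unioccurrent because $\Alph(q)$ is disjoint from $\Alph(p)\cup\Alph(Z)$. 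This completes the verification.
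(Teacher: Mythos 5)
Your overall architecture -- pin down the structure of $v$ using the GT constraints on special factors, then verify richness of the resulting forms via unioccurrent palindromic suffixes -- is the same as the paper's, and your backward direction is essentially the paper's. The genuine gap is in the forward direction, at the step you rely on most: deriving the coarse factorization $u = u_1 Z u_2$ from richness via complete returns. Palindromicity of a complete return $aWa$ to a letter $a \in \Alph(p)$ only tells you that the letter immediately following the earlier occurrence of $a$ equals the letter $c \notin \Alph(p)$ immediately preceding the later one; when the earlier occurrence is the last letter of $p$, or lies inside $u$, this is not a contradiction, so the ``unnatural straddling'' does not by itself yield anything. You defer exactly the missing content to ``a short case analysis using the maximality of $p$ and the GT-constraints,'' but that case analysis \emph{is} the proof. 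The paper never proves the factorization separately: since $K_v>1$ and $H_v>1$ every letter of $v$ is bi-extendable, so Propositions~\ref{P:right-special-characterization} and~\ref{P:left-special-characterization} (via Corollary~\ref{C:characterization1-take2}) give a \emph{unique} length-$1$ right special factor and a \emph{unique} length-$1$ left special factor, each of valence $2$; the paper identifies these two letters explicitly (the last letter of $pu_1$ and the leftmost letter of $u$ outside $\Alph(p)$), reads off the complete list of permitted length-$2$ factors, and obtains the factorization, the two-letter forms of $p$ and $q$, the bounds $|u_1|\le 1$, $|u_2|\le 1$, and $|Z|=|\Alph(Z)|$ simultaneously. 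Richness enters the forward direction only to force $|p|\ge 2$ and $|q|\ge 2$ (a single-letter longest palindromic prefix would, by $H_v>1$ and Proposition~\ref{P:aGjJsWlZ09pali}, begin a longer palindromic prefix), a step you omit but need in order to produce the second letter $b$ of $p$. Your argument for $|Z|=|\Alph(Z)|$ via a palindromic complete return is similarly vaguer than needed; the clean reason is that a repeated letter in $Z$ would create a second length-$1$ left (or right) special factor.

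Two smaller issues. First, your backward direction starts from the explicit forms (i)--(vii), but the hypothesis of that implication is only the existence of the factorization $u=u_1Zu_2$ with the stated alphabet conditions; you must first derive the forms from that hypothesis using only the GT constraints (not richness), as the paper does, before running the ups check. Second, you skip the preliminary reductions: $|\Alph(v)|\ge 3$ (which needs Proposition~\ref{P:binary-hearts} to rule out a binary heart with separated $p$ and $q$), the degenerate case $|w|=|\Alph(w)|$, and the observation $K_v>1$, $H_v>1$ that underwrites the ``valence $\ge 3$ forces a complexity jump of $2$'' arguments throughout.
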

\begin{note} The word $Z$ must be non-empty in conditions (iv)--(vi). 
\end{note}
\begin{proof}  Clearly we have $|\Alph(v)| \geq 3$, because if $|\Alph(v)| = 1$ then $v$ would be a palindrome (i.e., $v=p=q$), and if $|\Alph(v)| = 2$, then $p$ and $q$ cannot be separated by a non-empty word in $v$ by Proposition~\ref{P:binary-hearts}. We also observe that if $|w|=|\Alph(w)|$, then $w=v$ and such a word is simply a product of mutually distinct letters with its first and last letters being its longest palindromic prefix and longest palindromic suffix, respectively. Such a word $v$ has $p$, $u$, and $q$ satisfying condition (vi) with $m=n=1$. So we will now assume that $|w| > |\Alph(w)|$, in which case the heart $v$ of $w$ satisfies $K_v > 1$ and $H_v > 1$. This means, in particular, that each letter in $\Alph(v)$ is both left and right extendable in $v$.

($\Rightarrow:$) Suppose $v$ is rich. Then we must have $|p| \geq 2$ and $|q| \geq 2$. Indeed, if $p=a$ for some letter $a$, then since $H_v > 1$, $p$ occurs at least twice in $v$, and therefore $v$ begins with a palindromic complete return to $p$ that is longer than $p$ (its longest palindromic prefix), which is not possible. Likewise, we deduce that $|q|\geq 2$. We distinguish two cases, as follows.

\textbf{Case 1:} $\Alph(u) \subseteq \Alph(p) \cup \Alph(q)$. Suppose $x$ is the left-most letter in $u$ that is not in the alphabet of $p$. Then $v= pu_1xu_2q$ where $u_1$, $u_2$ are (possibly empty) words with $\Alph(pu_1) = \Alph(p)$. Let $a$ denote the last letter of $pu_1$. Then $a$ occurs in $p$ (since $\Alph(pu_1) = \Alph(p)$), and moreover, since $|p| \geq 2$, there exists a letter $b \ne x$ such that $ba$ and $ab$ are factors of $p$. Since $ax$ is also a factor of $v$, we see that $a$ is a right special factor of $v$ of length $1$. Moreover, the letter $x$ is left special in $v$ because $x \in \Alph(q)$ and so there exists a letter $y \in \Alph(q)$ such that $xy$ and also $yx$ is a factor of $q$; whence $ax$ and $yx$ are factors of $v$ where $y\ne a$. We thus deduce that $p$ contains no left or right special factor of length $1$. Indeed, the right special letter $a$ can only be followed by $b$ (not $x$) in $p$; otherwise $a$ would have right-valence greater than $2$, which implies that $C_v(2) \geq C_v(1) + 2$ (since each letter in $\Alph(v)$ is right-extendable in $v$). But this is impossible because $v$ is a GT-word (also see Proposition~\ref{P:right-special-characterization}). We also note that the letter $a$ can be preceded by only the letter $b$ in $p$; otherwise $a$ would be another left special factor of $v$ of length $1$ (in addition to the letter $x$) which is not possible by Proposition~\ref{P:left-special-characterization}. So $pu_1$ takes one of the following forms: 
\[
\mbox{$pu_1 = a^{m+1}$ ($a=b$, $u_1 = \empt$), \quad $pu_1 = (ab)^ma$ ($a \ne b$, $u_1 = \empt$), \quad or \quad $pu_1 = (ba)^mba$ ($a\ne b$, $u_1 = a$)}
\]
for some $m \in \mathbb{N}^+$. Furthermore, we observe that the letters $a$ and $b$ cannot occur to the right of $x$ in $v$; otherwise one of those letters would be left special. But the only letter that can be left special in $v$ is $x$, by the fact that $|v| = L_v + H_v + |\Alph(v)| - 2$ (by Corollary~\ref{C:characterization1-take2}) together with Proposition~\ref{P:left-special-characterization}. Hence $\Alph(xu_2q) = \Alph(q)$, and using similar reasoning as for $pu_1$, we observe that $xu_2q$ has no left or right special factor of length $1$. Therefore $xu_2q=x(yx)^ny$ for some $n \in \mathbb{N}^+$ where $x \ne y$ (and $u_2 = \empt$). We have thus shown that $v$ satisfies one of the conditions (i)--(iii) with $Z=\empt$, and the reversal of any such $v$ (namely, $\rev{v} = q\rev{u}p$, which is also a GT-word by Proposition~\ref{P:reversal-closure}) satisfies condition (vii).

\textbf{Case 2:} $\Alph(u)$ contains a letter not in $\Alph(p) \cup \Alph(q)$. Using very similar reasoning as in Case~1, we deduce that $v$ satisfies one of the conditions (i)--(vi) where $Z$ is a non-empty word containing no letters in common with $p$ and $q$, and the reversal of any such $v$ (namely, $\rev{v} = q\rev{u}p$, which is also a GT-word by Proposition~\ref{P:reversal-closure}) satisfies condition (vii). Moreover, we note that $|Z|=|\Alph(Z)|$, i.e., $Z$ is a product of mutually distinct letters. Indeed, $Z$ has no left or right special factor of length $1$ because the letters $a$ and $x$ are, respectively, the unique right and left special factors of $v$ of length $1$. This implies that $Z$ is either a product of mutually distinct letters (possibly just a single letter), or $Z$ takes one of the following forms:
\[
 Z= (cd)^kc, \quad Z=(cd)^{k+1}, \quad  \mbox{or} \quad Z=c^{k+1} \quad \mbox{for some $k \in \mathbb{N}^+$},
\]
where $c$ and $d$ are distinct letters. But none of the above forms are possible because in each case the letter $c$ would be a left special factor of $v$ of length $1$ (different from $x$). Thus $Z$ must be a product of mutually distinct letters.

($\Leftarrow$): Conversely, suppose $u=u_1Zu_2$ where $u_1$, $u_2$, and $Z$ are words such that $\Alph(u_1) \subseteq \Alph(p)$, $\Alph(u_2) \subseteq \Alph(q)$, and $Z$ contains no letters in common with $p$ and $q$. Then, using the same reasoning as above, we deduce that $v$ satisfies one of the conditions (i)--(vii) and it is easy to see that every prefix of any such word has a unioccurrent palindromic suffix; whence $v$ is rich.
\end{proof}

\begin{proof}[Proof of Theorem~\ref{T:GT-rich}]  ($\Rightarrow$): Suppose $w$ is rich. Then the heart $v$ of $w$ is clearly rich too since any factor of a rich word is rich (see Lemma~\ref{L:rich-heart}). Suppose, by way of contradiction, that $v$ is the (rich) heart of a rich GT-word $w$ such that $v$ satisfies none of the conditions (i)--(iii) in the statement of the theorem. Then, by Lemmas~\ref{L:GT-rich}, \ref{L:sep-by-x}, and \ref{L:rich-disjoint}, the only possibility is that $v$ takes the form $v=puq$ where $|u| \geq 2$ and $\Alph(p) \subseteq \Alph(q)$ or $\Alph(q) \subseteq \Alph(p)$. However, by Lemma~\ref{L:sep-by-u}, all such $v$ are not rich. Hence one of the conditions (i)--(iii) must hold.

($\Leftarrow$): Conversely, suppose that $v$ satisfies one of the conditions (i)--(iii) in the statement of the theorem. To prove that $w$ is rich, it suffices to prove that $v$ is rich (by Lemma~\ref{L:rich-heart}). We first observe that if $v$ satisfies either of the conditions (ii) or (iii), then $v$ is rich (by Lemmas~\ref{L:GT-rich}, \ref{L:sep-by-x}, and \ref{L:rich-disjoint}). So let us now assume that $v$ satisfies condition (i) and suppose, on the contrary, that $v$ is a heart of a GT-word of minimal length satisfying that condition but $v$ is not rich. Then, by Proposition~\ref{P:aGjJsWlZ09pali}, $v$ contains a non-palindromic complete return to some non-empty palindrome $P$, say $r = PUP$ where $U$ is a non-palindromic word. If $r=PUP$ is a factor of $p$, then $p$ would be a non-rich (heart of a) GT-word shorter than $v$ satisfying condition (i), contradicting the minimality of $v$ as a heart of a GT-word of shortest length satisfying condition (i), but not rich. So $r=PUP$ is not a factor of $p$, and likewise, $r$ is not a factor of $q$. Therefore $r=PUP$ is not wholly contained in either of the palindromes $p$ and $q$. Hence, we have $v=p'rq' = p'PUPq'$ where $p'$ is prefix of $p$ and $q'$ is a suffix of $q$. Note that at least one of the words $p'$ and $q'$ is non-empty; otherwise $v=PUP$ where $P=p=q$ and $|U| \geq 2$, in which case $v$ does not satisfy either of the conditions (i) or (ii). We distinguish three cases, as follows.

\textbf{Case 1:} \textit{The first occurrence of $P$ in $r$ is not contained in $p$.} In this case, $q=P'UPq'$ where $P'$ is a (possibly empty) suffix of $P$ with $P' \ne P$ (i.e., $P'$ is a \textit{proper} suffix of $P$)  because, by assumption, $q$ does not contain $r=PUP$ as a factor. Let $S=PUPq'$. This proper suffix of $v$ is (the heart of) a GT-word (by Proposition~\ref{P:factors-closed}) and its longest palindromic prefix is $P$. Indeed, if $S$ had a palindromic prefix longer than $P$, but shorter than $r=PUP$, then $P$ would occur more than twice in $r$ (i.e., $P$ would occur as an interior factor of $r$), which is impossible because $r$ is a complete return to $P$. On the other hand, if $S$ had a palindromic prefix $Z$ longer than $r=PUP$, then the palindrome $Z$ (which begins with $r=PUP$) would be a non-rich (heart of a) GT-word shorter than $v$ satisfying condition~(i), contradicting the minimality of $v$. So we see that the longest palindromic prefix of $S$ (namely $P$) and the longest palindromic suffix of $S$ (namely $q$) are unseparated in $S$. Hence $S$ is a non-rich (heart of a) GT-word (shorter than $v$) satisfying condition (i), contradicting the minimality of $v$ again. 

\textbf{Case 2:} \textit{The second occurrence of $P$ in $r$ is not contained in $q$.} Using the same reasoning as in Case~1 we reach a contradiction.

\textbf{Case 3:}  \textit{The first occurrence of $P$ in $r$ is contained in $p$ and the second occurrence of $P$ in $r$ is contained in $q$}. In this case, $p$ ends with $PU'$ where $U'$ is a proper prefix of $UP$ and $q$ begins with $U''P$ where $U''$ is a proper suffix of $PU$. Let $S=PUPq'$ where $q'$ is a suffix of $q$. We observe that $S$ has longest palindromic suffix $q$, and using the same reasoning as in Case~1, we deduce that the longest palindromic prefix of $S$ is $P$. Moreover, $P$ and $q$ must be unseparated in $S$. Otherwise, if $P$ and $q$ were separated by a letter or word of length at least $2$ in $S$, then the same would be true of $p$ and $q$ in $v$, which is impossible because $v$ satisfies condition (i). Hence $S$ is a non-rich (heart of a) GT-word (shorter than $v$) satisfying condition~(i), contradicting the minimality of $v$.

In each of the above three cases we have reached a contradiction. Hence $v$ (and therefore $w$) must be~rich.
\end{proof}

\section{Closing Remarks} \label{S:closing-remarks}

In this paper we have introduced a new class of words that encompass the (original) binary trapezoidal words and have studied combinatorial and structural properties of these so-called generalized trapezoidal words. We have also proved some characterizations of these words and have described all those that are rich in palindromes.  A nice (direct) consequence of our characterization of rich GT-words is that all GT-words with palindromic hearts are rich.  In the case of binary trapezoidal words, we have independently proved that the longest palindromic prefix and longest palindromic suffix of any such word must be ``unseparated'', from which it follows that all binary trapezoidal words are rich, originally proved in a different way in \cite{aDaGlZ08rich}. Further work-in-progress concerns the enumeration of generalized trapezoidal words, as has recently been considered in the binary case~\cite{mBaDgF13enum} (see also \cite{aH01onlo} in which an enumeration formula for binary trapezoidal words was independently obtained).

\section{Acknowledgements} 

We thank very much the two anonymous referees for their thoughtful comments and suggestions which greatly helped to improve the paper. The first author would also like to thank members of the \textit{Laboratoire MIS} and \textit{Equipe SDMA} at the Universit\'e de Picardie Jules Verne for their kind hospitality during her 1-month visit in mid 2011 when some of the preliminary work on this paper was done. She also gratefully acknowledges the funding provided by the Universit\'e de Picardie Jules Verne for her visit to Amiens.

%---------------------------------------------------------------------------


\begin{thebibliography}{99}

\bibitem{sBsHmNcR04onth} S.~Brlek, S.~Hamel, M.~Nivat, C.~Reutenauer,  On the palindromic complexity of infinite words, \textit{Internat. J. Found. Comput. Sci.} \textbf{15} (2004) 293--306.

\bibitem{mBaDgF13enum} M.~Bucci, A.~De~Luca, G.~Fici, Enumeration and structure of trapezoidal words, \textit{Theoret. Comput. Sci.} \textbf{468} (2013) 12--22.

%\bibitem{mBaDaGlZ08acon} M.~Bucci, A.~De~Luca, A.~Glen, L.Q.~Zamboni, A connection between palindromic and factor complexity using return words, \textit{Adv. in Appl. Math.} \textbf{42} (2009) 60--74.

\bibitem{fD02acom} F.~D'Alessandro, A combinatorial problem on trapezoidal words, \textit{Theoret. Comput. Sci.} \textbf{273} (2002) 11--33.

\bibitem{aD99onth} A.~de Luca, On the combinatorics of finite words, \textit{Theoret. Comput. Sci.} \textbf{218} (1999) 13--39. 

\bibitem{aDaGlZ08rich} A.~de~Luca, A.~Glen, L.Q.~Zamboni, Rich, Sturmian, and trapezoidal words, \textit{Theoret. Comput. Sci.} \textbf{407} (2008) 569--573.

\bibitem{xDjJgP01epis} X.~Droubay, J.~Justin, G.~Pirillo, Episturmian words and some constructions of de Luca and Rauzy, \textit{Theoret. Comput. Sci.} \textbf{255} (2001) 539--553. 

\bibitem{xDgP99pali} X. Droubay, G. Pirillo, Palindromes and Sturmian words, \textit{Theoret. Comput. Sci.} \textbf{223} (1999), p. 73--85.

\bibitem{aGjJsWlZ09pali} A. Glen, J. Justin, S. Widmer, L.Q. Zamboni, Palindromic richness, \textit{European J. Combin.} \textbf{30} (2009) 510--531.

\bibitem{aH01onlo} A. Heinis, On low-complexity bi-infinite words and their factors, \textit{J. Th\'eor. Nombres Bordeaux} \textbf{13} (2001) 421--442.

\bibitem{mL02alge} M.~Lothaire, \textit{Algebraic Combinatorics on Words, Encyclopedia of Mathematics and its Applications}, vol.~90, \textit{Cambridge University Press}, UK, 2002. 

\bibitem{gHmM40symb} M.~Morse, G. A.~Hedlund, Symbolic dynamics II. Sturmian trajectories, \textit{Amer. J. Math.} \textbf{62} (1940) 1--42.

\end{thebibliography}
\end{document}